\documentclass[a4paper,oneside,11pt]{article}
\usepackage{amsmath}
\usepackage{amsfonts}
\usepackage{amssymb}
\usepackage{titlesec}
\usepackage{graphicx}
\usepackage{subfigure}
\usepackage{soul}
\usepackage{fancyhdr}
\usepackage{float}
\usepackage{latexsym}
\usepackage{mathrsfs}
\usepackage{mathtools}
\usepackage{amsthm}
\usepackage{wasysym}
\usepackage{cite}
\usepackage{color}
\usepackage{geometry}
\usepackage{extarrows}
\usepackage{enumerate}
\usepackage{dsfont}
\usepackage{bm}
\usepackage{enumitem}

\DeclareMathOperator{\TV}{TV}
\DeclareMathOperator{\meas}{meas}

\usepackage[marginal]{footmisc}
\usepackage[dvips]{epsfig}
\usepackage{amscd}
\usepackage[bookmarks,colorlinks,citecolor=blue,linkcolor=red]{hyperref}
\usepackage{esint}    
\usepackage{stmaryrd}  
\usepackage{tikz}      

\allowdisplaybreaks

\newcommand{\R}{\mathbb{R}}

\renewcommand{\chi}{\mathbf{1}}
\newcommand{\pv}{\operatorname{PV}}
\numberwithin{equation}{section}

\newtheorem{definition}{Definition}[section]
\newtheorem{theorem}{Theorem}[section]
\newtheorem{lemma}{Lemma}[section]
\newtheorem{remark}{Remark}[section]
\newtheorem{proposition}{Proposition}[section]

\title{Steady-State Phase Transition States and Large-Time Asymptotics for the Cauchy Problem of Compressible Navier-Stokes Equations with van der Waals Equation of State}
\date{  }
\author{Yazhou C{\small HEN}$^1$,  Yi P{\small ENG}$^1$, Xiaoding S{\small HI}$^{1*}$, Xiaoping W{\small ANG}$^2$ \\[3mm]
\scriptsize$^{1}$ {College of Mathematics and Physics, Beijing University of
Chemical Technology, Beijing 100029, China}\\
\scriptsize$^2$ {School of Science and Engineering, The Chinese University of Hong Kong, Shenzhen, Shenzhen 518055, China  }
}

\begin{document}
\maketitle
\renewcommand{\thefootnote}{\fnsymbol{footnote}}
\footnotetext[1]{{Corresponding author. }\\
{Email: chenyz@mail.buct.edu.cn (Y.Chen), apengyi@163.com (Y.Peng), shixd@mail.buct.edu.cn (X.Shi), wangxiaoping@cuhk.edu.cn (X.\ Wang)}}

\begin{abstract}
We study the Cauchy problem for the one-dimensional isothermal compressible Navier--Stokes system with the van der Waals equation of state, and analyze the well-posedness and asymptotic stability of steady-state phase-transition solutions at subcritical temperatures. Working in Lagrangian coordinates, we use variational techniques to construct piecewise smooth steady-state solutions corresponding to two-phase equilibrium states under finite-mass perturbations. Introducing artificial-viscosity regularization, we first establish existence of the global minimizer of the regularized variational problem, unique within each fixed symmetry class; passing to the vanishing artificial-viscosity limit then yields a physically admissible steady-state solution with two phase interfaces. For initial perturbations that are small with respect to energy, piecewise-gradient, total-variation, and integral norms but may exhibit large pointwise jumps across phase boundaries, we derive uniform a priori bounds for weak solutions near the two-phase equilibrium profile. We further prove that these global weak solutions converge to the steady-state two-phase equilibrium in $L^\infty(\mathbb{R})$ as $t\to\infty$, thereby characterizing their large-time asymptotic behavior. Physically, finite mass perturbations break the uniqueness of the uniform steady state and generate infinitely many piecewise constant two-phase equilibria satisfying the Maxwell equal-area rule. Artificial viscosity acts as a selection principle that singles out the physically relevant two-interface steady-state configuration, and this two-phase equilibrium is asymptotically stable under small integral perturbations that allow large-amplitude discontinuities at phase interfaces.
\end{abstract}

\noindent{\bf Keywords:} Compressible Navier-Stokes equations, van der Waals equation of state, steady-state solutions, phase transition, large-time behavior

\

\noindent{\bf AMS subject classifications:} 35Q35; 35B65; 76N10; 35M10; 35B40; 35C20; 76T30

\section{Introduction and the Main Results}\label{sec:int}

\hspace{2em} Gas--liquid phase transition is a fundamental thermodynamic phenomenon, corresponding to the interconversion between gaseous and liquid phases under prescribed conditions of temperature and pressure. Its underlying mechanism is governed by the competition between intermolecular cohesive forces and molecular thermal motion: thermodynamically, vaporization is an endothermic process, whereas condensation releases latent heat. Contemporary mathematical investigations in this field address the mechanisms of phase transition, the dynamical evolution of phase interfaces, and multi-physics coupling effects. For compressible fluids, gas--liquid phase transitions described by the van der Waals equation of state are closely connected to the well-posedness theory of the compressible Navier--Stokes equations, to spinodal instability analysis, and to the evolution of two-phase interfaces. Van der Waals fluids are therefore a central subject at the intersection of thermodynamics, fluid mechanics, and the mathematical theory of partial differential equations.

In contrast to the ideal gas equation of state, which models gas molecules as point particles with zero volume and no intermolecular interactions, the van der Waals equation of state, first proposed by J. D. van der Waals in \cite{vdW1894}, introduces two corrections that characterize the behavior of real non-ideal gases. The first is a volume correction, which accounts for the finite volume of gas molecules and reduces the effective free volume available for molecular motion. The second is a pressure correction describing the mean-field intermolecular attractive interaction, which lowers the macroscopic pressure of a real fluid relative to the prediction of the ideal gas law. As a physically plausible and mathematically tractable constitutive model, the van der Waals equation captures the essential thermodynamic features of gas--liquid two-phase flows. Its explicit form reads
\begin{equation}\label{eq:vdw-eos}
\left(p+\frac{a}{v^2}\right)(v-b)=R\theta.
\end{equation}
Here, $\theta>0$ denotes the absolute temperature of the fluid, $v$ denotes the specific volume (volume per unit mass), $R$ is the specific gas constant, and $a>0$ and $b>0$ are positive constants accounting for intermolecular cohesive forces and the finite volume of molecules, respectively.

According to the van der Waals equation of state \eqref{eq:vdw-eos}, a critical temperature $\theta_c$ can be uniquely determined. For temperatures satisfying $0 < \theta < \theta_c$, each $p$--$v$ isotherm possesses a nonmonotonic branch containing one local maximum and one local minimum. This nonmonotonic feature corresponds to mechanical instability, and the associated domain describes the range of liquid-gas phase coexistence; see Figure 1. The critical temperature reads
\begin{equation}\label{criticaltemperature}
\theta_c = \frac{8a}{27Rb},
\end{equation}
which sets the upper limit of the unstable two-phase region.
For physically meaningful states, pressure must stay positive for all admissible specific volumes $v>b$. Direct computation shows that a lower temperature bound $\theta_{*} = \frac a{4bR}$ is required to guarantee $p(v) > 0$ over $v>b$. In what follows, we restrict our analysis to isothermal flows with
\begin{equation}\label{tildetheta}
\frac{a}{4bR}=\theta_{*} < \tilde{\theta} < \theta_c=\frac{8a}{27Rb},
\end{equation}
where both inequalities are necessary for the existence of physically valid liquid-gas phase transitions. For a fixed temperature $\tilde{\theta}$ within this interval, the pressure and its first and second derivatives with respect to $v$ take the form
\begin{equation}\label{eq:p-derivatives}
p(v) = \frac{R\tilde{\theta}}{v-b} - \frac{a}{v^2}, \qquad p'(v) = -\frac{R\tilde{\theta}}{(v-b)^2} + \frac{2a}{v^3}, \qquad p''(v) = \frac{2R\tilde{\theta}}{(v-b)^3} - \frac{6a}{v^4}.
\end{equation}
\begin{figure}[htbp]
\centering
\includegraphics[width=0.7\textwidth]{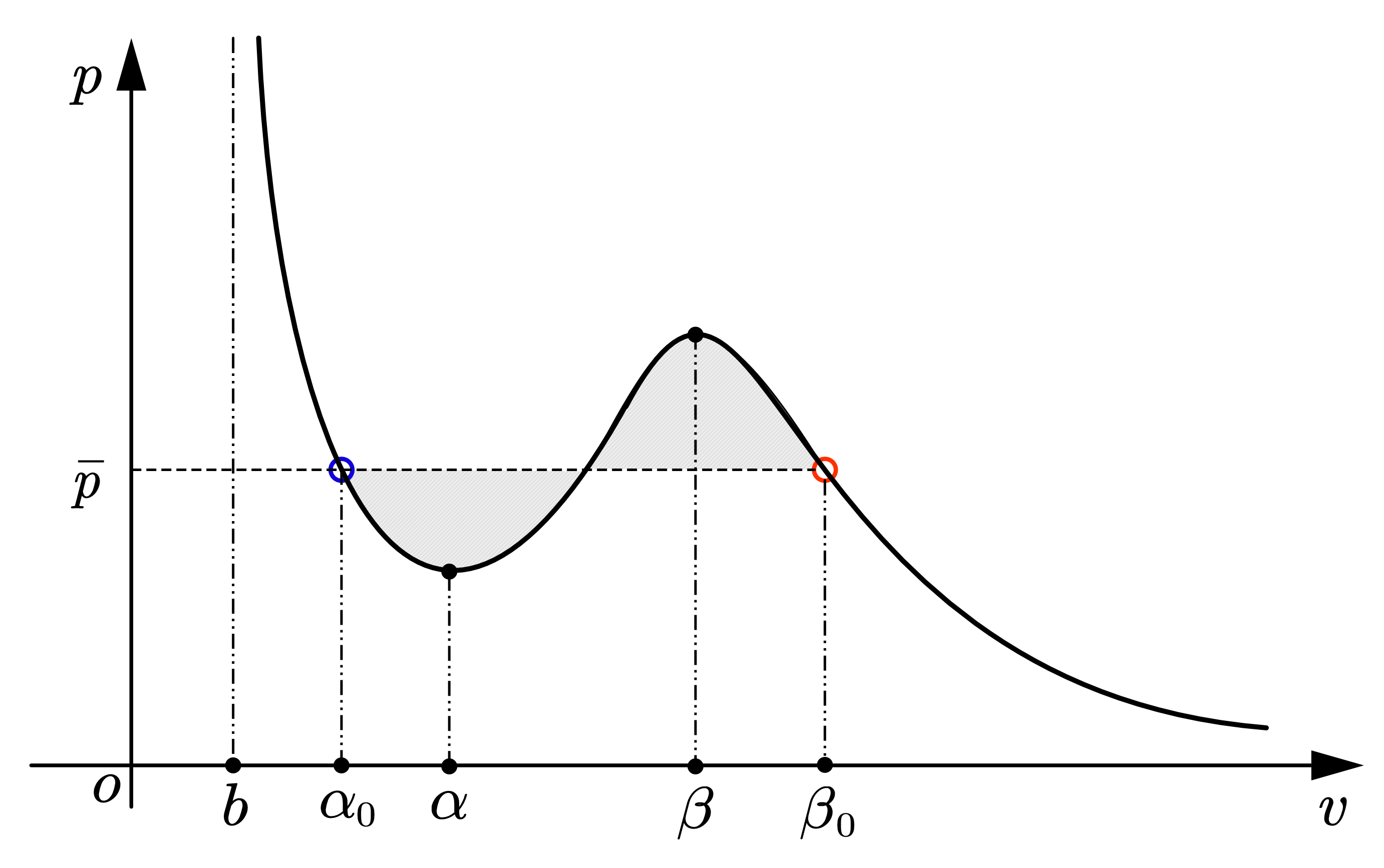}
\caption{van der Waals $p$-$v$ isotherm}
\label{fig:vdw-pv}
\end{figure}

The relation \(c^2 = -v^2 p'(v)\) directly characterizes the local wave behavior of the fluid system. When \(p'(v) > 0\), the squared sound speed becomes negative, leading to purely imaginary sound velocities. Mathematically, this property renders the Lagrangian governing system elliptic and hence the corresponding initial-value problem ill-posed. This mechanical instability is consistent with thermodynamic instability: the condition \(p'(v) > 0\) is equivalent to \(\partial_{vv} F < 0\) for the Helmholtz free energy \(F(v)\), whose convexity in the specific volume \(v\) is required for thermodynamic equilibrium stability. Uniform single-phase states can therefore not remain stable within this regime, which necessitates the Maxwell construction.

For any fixed subcritical temperature \(\tilde{\theta}\in(\theta_*,\theta_c)\), the pressure function \(p(v)\) admits two distinct spinodal points \(\alpha\) and \(\beta\). The interval \((\alpha,\beta)\) corresponds precisely to the elliptic and thermodynamically unstable domain, where continuous solution branches of the van der Waals equation fail physical feasibility. To eliminate this unphysical behavior and provide a rigorous description of liquid--vapor coexistence and phase transitions for van der Waals fluids, we introduce a pair of Maxwell equilibrium states \(\alpha_0\) and \(\beta_0\) satisfying the ordering \(b<\alpha_0<\alpha<\beta<\beta_0\), which follows from Maxwell's equal-area rule \cite{M1875}. These two coexisting phase states provide the physically admissible reference states for our subsequent phase-transition analysis; their rigorous definition is given below.
\begin{definition}\label{Maxwell}
\textbf{(Maxwell Equilibrium States and Maxwell Region \cite{M1875})}
For a fixed temperature \(\theta_* < \tilde\theta < \theta_c\), the Maxwell equilibrium states are uniquely specified via Maxwell's equal-area rule (see Figure 1), whose mathematical formulation reads
\begin{equation}\label{eq:Maxwell_equal_area}
\int_{\alpha_0}^{\beta_0} p(v) \, \mathrm{d}v = p(\alpha_0)(\beta_0 - \alpha_0) = p(\beta_0)(\beta_0 - \alpha_0)=\sigma_0(\beta_0 - \alpha_0).
\end{equation}
The parameters \(\alpha_0\) and \(\beta_0\) represent the specific volumes of the two thermodynamically coexisting phases, referred to as Maxwell equilibrium states, and \(\sigma_0:=p(\alpha_0)=p(\beta_0)\) denotes the common Maxwell saturation pressure. Correspondingly, we define the open interval
\begin{equation}\label{eq:Maxwell_region}
\Omega_{\mathrm{Maxwell}} = (\alpha_0, \beta_0)
\end{equation}
as the Maxwell region for the phase transition system.
\end{definition}
Under Definition \ref{Maxwell}, elementary calculations confirm that for intermediate subcritical isotherms with $\theta_* < \tilde{\theta} < \theta_c$, all characteristic specific volumes satisfy the strict ordering
\[
b < v_1 < \alpha_0 < \alpha < \beta < \beta_0 < v_2,
\]
where $v_1$ and $v_2$ stand for the inflection specific volumes obtained by solving $p''(v) = 0$.
These inflection points $v_1$, $v_2$ possess only geometric significance: they mark the locations where $p''(v)$ changes sign, with no intrinsic thermodynamic connection to two-phase coexistence.

The Maxwell region characterizes the physically valid regime for two-phase coexistence of van der Waals fluids. Fluid states evolving within this interval cannot maintain stable homogeneous single-phase configurations and will spontaneously undergo liquid-vapor phase separation. Against this physical background, the present work verifies mathematically that this interval serves as the intrinsic domain for phase-transition evolution. All admissible discontinuous phase-jump solutions constructed in this study exactly connect the two Maxwell equilibrium states.
\vspace{3mm}
\begin{definition}\label{phase-transition-def}
\textbf{(Phase Transition)} For compressible fluids governed by the subcritical van der Waals equation of state, phase transition (or phase separation) refers to the spontaneous dynamical process through which a uniform or slightly perturbed fluid system evolves into a stable two-phase coexistence state. Let \(\rho\) denote the mass density and \(v\) the specific volume, which satisfy the reciprocal relation \(\rho = 1/v\). The resulting coexistence state comprises two distinct stable equilibrium density phases: a low-density gaseous phase with density \(\beta_0^{-1}\) and a high-density liquid phase with density \(\alpha_0^{-1}\). The volumes \(\alpha_0\) and \(\beta_0\) correspond to stable Maxwell equilibrium states that bound the coexistence interval \((\alpha_0,\beta_0)\), whereas the inner interval \((\alpha,\beta)\) corresponds to the unstable spinodal regime where the inequality \(p'(v) > 0\) holds.
\end{definition}
\vspace{3mm}
This paper investigates the flow characteristics of van der Waals fluids over the full spatial domain. The primary objective is to identify all physically feasible equilibrium solutions related to phase transitions and examine their dynamic stability, thereby elucidating the mechanisms governing fluid phase transition behaviors. The analysis begins with a state classification based on the van der Waals equation of state. This classification enables a rigorous partitioning of the state space to distinguish stable, metastable, and unstable flow regimes, which provides a solid foundation for subsequent investigations into phase-separated structures and solutions with discontinuous interfacial properties.

For fixed $\tilde{\theta} \in (\theta_*, \theta_c)$, we decompose the specific-volume domain $(b, +\infty)$ into three subregions according to the van der Waals pressure relation \eqref{eq:vdw-eos}, corresponding to unstable, metastable, and stable thermodynamic states, as sketched in Figure 1. The detailed partition is defined as follows:
\begin{equation}\label{ABC}
\Omega_{\mathrm{unstable}} \triangleq (\alpha,\beta),\quad \Omega_{\mathrm{metastable}} \triangleq (\alpha_0,\alpha]\cup[\beta,\beta_0), \quad
\Omega_{\mathrm{stable}} \triangleq (b,\alpha_0]\cup[\beta_0,+\infty).
\end{equation}

With the above thermodynamic preparations in place, we now introduce the mathematical model investigated throughout this paper. We work with the one-dimensional compressible isothermal Navier-Stokes system for van der Waals fluids formulated in Lagrangian coordinates. This system describes time-dependent fluid motion and the dynamical evolution of phase transitions. The governing equations take the form
\begin{equation}\label{ns-lagrange}
\left\{
\begin{aligned}
& v_{t} - u_{x} = 0, & (x,t)\in\mathbb{R}\times\mathbb{R}_+,\\
& u_{t} + p_{x}(v) = \epsilon\left(\frac{u_{x}}{v}\right)_{x}, & (x,t)\in\mathbb{R}\times\mathbb{R}_+,\\
& (v,u)(x,0) = \big(v_0(x),u_0(x)\big) \to (v_{\infty},0), & \text{as } x\to\pm\infty,
\end{aligned}
\right.
\end{equation}
where $v$, $u$, and $p$ denote the specific volume, fluid velocity, and pressure, respectively, and $\epsilon$ represents the viscosity coefficient. Here the pressure function $p(v)$ obeys the subcritical van der Waals equation of state \eqref{eq:vdw-eos}. As indicated by
\begin{equation}\label{v-infty}
 v_\infty \in \{\alpha_0,\beta_0\},
\end{equation}
the far-field limiting specific volume $v_\infty$ is restricted to one of the two stable Maxwell equilibrium states $\alpha_0$ or $\beta_0$.

In addition, we impose a finite-mass constraint on the initial specific volume to ensure spatially localized perturbations:
\begin{equation}\label{localized perturbation condition}
\int_{-\infty}^{+\infty} \big( v_0(x) - v_\infty \big) \mathrm{d}x\triangleq M \in (-\infty,0)\cup(0,+\infty),
\end{equation}
This condition guarantees that the initial perturbation has finite total mass and is spatially localized in an integral sense.

System \eqref{ns-lagrange} denotes the one-dimensional compressible Navier--Stokes system formulated in Lagrangian coordinates. Its well-posedness and large-time asymptotic behavior have been extensively studied over the past several decades, most thoroughly for ideal gases obeying the relation $p = R\theta/v$, where $R$ is the specific gas constant. Matsumura and Nishida \cite{MN1980} first proved global well-posedness of strong solutions under small initial data; Lions \cite{Lions1998} later developed the renormalized weak solution framework, and Huang et al. \cite{HLX2012} constructed finite-energy solutions that allow for arbitrarily large density oscillations.

For the one-dimensional non-isentropic setting, global existence without small-data restrictions was first established by Kanel' \cite{Kanel1979}, Kazhikhov and Shelukhin \cite{KS1977}, and Kazhikhov \cite{Kazhikhov1982}, and later generalized by Jiang \cite{Jiang1999} and Li and Liang \cite{LL2016}. Stability results for rarefaction waves and shock waves were obtained by Matsumura and Nishihara \cite{MN1985,MN1986}, and further extended to contact discontinuities by Huang, Matsumura and Shi \cite{HMS2004}.

The van der Waals equation of state \eqref{eq:vdw-eos} gives a canonical description of pressure behavior for non-ideal fluids undergoing gas--liquid phase transitions. Unlike the ideal gas law, whose pressure is a strictly convex, monotone function of the specific volume, the van der Waals equation captures the inherently non-convex, non-monotone character of real fluids. This structural difference means classical analytical frameworks built for ideal-gas flows do not extend directly to general non-ideal systems.

Early theoretical work on van der Waals phase transitions focused on inviscid Riemann problems for the Lagrangian Navier--Stokes system \eqref{ns-lagrange}. James \cite{J1980}, Shearer \cite{S1986}, and Hattori \cite{H1986} constructed solutions under the entropy rate admissibility criterion. To clarify the wave structure of one-dimensional inviscid Riemann solutions, Hsiao \cite{H1990} and Yanagi \cite{Y1992} derived complete families of admissible shock and rarefaction wave curves.

Subsequent work extended these theories to viscous, capillary, and non-isothermal regimes. A closely related line of research adopts scale-invariant artificial viscosity regularization to treat the mixed-type phase-transition Riemann problem. In a two-part investigation, Slemrod \cite{S1989} first proved existence of regularized self-similar solutions, and Fan \cite{F1991} later completed the well-posedness argument for the vanishing-viscosity limit and gave a full characterization of the two-wave-fan structure of the limit solutions. In a related direction with physical viscosity and capillarity rather than artificial viscosity, Joseph and LeFloch \cite{JosephLeFloch2007} established uniform total-variation bounds and strong $L^1$ convergence of self-similar solutions to the phase-dynamics Riemann problem under large data.

Slemrod \cite{S1983,S1984} and Grinfeld \cite{G1989} applied Conley index theory to establish existence of traveling wave solutions in the Lagrangian framework. For multidimensional phase transition theory, we refer the reader to the comprehensive treatments of Benzoni-Gavage \cite{BG1998,BG1999} and Wang and Xin \cite{WX2003}, and the references cited therein. Zhang \cite{Z2007} used center-manifold analysis to prove existence of viscous--capillary traveling profiles for non-isothermal subsonic phase transitions in van der Waals fluids, covering the physically relevant regime of small viscosity, high thermal conductivity, and large heat capacity. Eden, Milani, and Nicolaenko \cite{EMN1993} studied periodic van der Waals Navier--Stokes equations with Korteweg capillary terms via semigroup theory, establishing weak stability of trivial steady states under small initial perturbations and proving existence of finite-dimensional local exponential attractors. Zumbrun \cite{Z2000} subsequently proved linear orbital stability of traveling wavefronts for viscous--capillary $p$-systems. Hoff \cite{Hoff1991,Hoff1992} and Hoff and Khodja \cite{HK1993} examined dynamic stability of steady weak solutions on the full real line for small initial disturbances.

We note that non-monotone pressure--density relations of similar form also arise in dynamic elastic bar models with $p(\rho)=\rho^{-3}-\rho^{-1}$. For one-dimensional Navier--Stokes equations regularized by artificial viscosity, Mei, Liu, and Wong \cite{MLW2007a,MLW2007b,MW2008} established global existence of strong solutions. More recently, Huang, Wang, and Zhang \cite{HWZ2025} investigated the large-time asymptotic behavior of solutions to one-dimensional cubic non-convex scalar viscous conservation laws, which share a comparable non-monotone structure with van der Waals fluid models. For the Navier--Stokes equations with non-monotone pressure, Tsyganov \cite{Tsyganov2007} studied the asymptotic behavior and dynamic stability of static phase mixtures, showing that fairly arbitrary discontinuous phase mixtures can arise as time-asymptotic limits of smooth evolutions. Most closely related to the present work, the authors and their collaborators \cite{CPSW2026} obtained global unique strong solutions to the one-dimensional Cauchy problem for a compressible non-isentropic Navier--Stokes/Allen--Cahn two-phase flow system with van der Waals equation of state and degenerate thermal conductivity. Working with non-vacuum, finite-temperature initial data and no smallness restriction on the data, they used a refined $L^2$-energy framework to show that all physical quantities remain uniformly bounded over any finite time interval, even when large density fluctuations and phase transitions are driven by the non-monotone pressure--volume relation.

In parallel with these analytical developments, numerical studies have advanced computational modeling of van der Waals fluid phase transitions. Affouf and Caflisch \cite{AC1991} carried out numerical simulations of the Cauchy problem under combined viscous and capillary regularization. Using a pseudospectral method with artificial viscosity, Hsieh and Wang \cite{HW1997} confirmed numerically that non-monotone pressure relations drive phase separation, and attributed phase boundary formation to unstable flow regimes corresponding to the positively sloped branch of van der Waals isotherms. They also observed that density-dependent artificial viscosity suppresses abrupt density jumps, smooths steep gradients, and effectively introduces interfacial energy effects into the system. In later work, He, Liu, and Shi \cite{HLS2018} revisited the regularized van der Waals fluid model using a second-order TVD Runge--Kutta splitting algorithm combined with the Jin--Xin relaxation scheme. This numerical framework was further extended by He and Shi to coupled Navier--Stokes/Cahn--Hilliard and Navier--Stokes/Allen--Cahn systems in subsequent work \cite{HS2020,HS2021}.

Beyond dynamical wave problems, the equilibrium phase structure of van der Waals fluids has also received careful theoretical attention. Carr, Gurtin, and Slemrod \cite{CGS1984} initiated the analysis of one-dimensional van der Waals--Cahn--Hilliard models on finite intervals with small gradient energy parameters, showing that monotone single-interface solutions and their symmetric counterparts are the only global energy minimizers. By contrast, multi-interface and non-monotone solutions carry higher energy and do not form local minimizers, which removes the energy degeneracy of piecewise constant two-phase states in the vanishing-gradient limit. The phase-plane structure of steady states and heteroclinic orbits of regularized van der Waals models, together with their connection with the Maxwell construction, was analyzed by Tang \cite{Tang1998}, while Fife and Wang \cite{FifeWang1998} established the existence of periodic structures with sharp layer profiles for small regularization parameters and explored their relation with the Cahn--Hilliard equation. As a more recent result, Chen et al. \cite{CHNPS2026} derived sharp phase-transition criteria for periodic steady isentropic van der Waals Navier--Stokes flows via artificial viscosity regularization. Specifically, liquid--vapor phase separation occurs if and only if the spatially averaged specific volume falls within the Maxwell coexistence interval. In the vanishing-viscosity limit, regularized solutions converge to sharp-interface Maxwell equilibria for averaged states inside this interval, and remain uniformly single-phase otherwise.

Inspired by these existing works, we analyze the structure of steady-state solutions and exploit the associated steady-state profiles to characterize the large-time dynamics of solutions to the Cauchy problem for the isothermal Navier-Stokes equations of van der Waals type. In particular, the corresponding steady-state system for \eqref{ns-lagrange} reads as follows:
\begin{equation}\label{steady-state ns-lagrange}
\left\{
\begin{aligned}
& u_{x}=0,& x\in \mathbb{R},\\
& p_{x}(v)=\epsilon\left(\frac{u_{x}}{v}\right)_{x}, & x\in \mathbb{R}, \\
& \lim_{x\to\pm\infty} (v,u) = (v_{\infty},0) ,
\end{aligned}
\right.
\end{equation}
together with the following finite-mass perturbation condition:
\begin{equation}\label{law of mass conservation}
\int_{-\infty}^{+\infty}  (v(x) - v_\infty ) \mathrm{d}x =M\in (-\infty,0)\cup(0,+\infty).
\end{equation}
Substituting \eqref{steady-state ns-lagrange}$_1$ into \eqref{steady-state ns-lagrange}$_2$ immediately yields
\begin{equation}\label{stationary-equation}
 \left\{\begin{aligned}
 & - p_{x}(v)=0,& x\in \mathbb{R}, \\
 & \lim_{x\to\pm\infty} v =v_{\infty}.
\end{aligned}\right.
\end{equation}

Employing variational methods, we establish the existence of piecewise smooth solutions to the steady-state system \eqref{stationary-equation} subject to the finite-mass perturbation condition \eqref{law of mass conservation}. Such solutions carry distinct physical significance within van der Waals fluid dynamics and accurately capture fluid phase transitions. The precise statement of this result is provided in the following theorem:
\vspace{3mm}

\begin{theorem}\label{thm:existence_pw_solution}
Consider the steady-state system \eqref{stationary-equation} subject to the mass conservation constraint \eqref{law of mass conservation}. A unique constant steady-state solution $v \equiv v_\infty$ exists if and only if the mass perturbation parameter vanishes, i.e., $M = 0$. In contrast, for any $M \neq 0$, the system admits infinitely many piecewise constant phase-transition solutions. These solutions take values exclusively in the Maxwell two-phase set $\{\alpha_0,\beta_0\}$ defined in Definition \ref{Maxwell} and satisfy the Maxwell equal-area rule.

In all such solutions, $v_\infty$ coincides with either $\alpha_0$ or $\beta_0$. The solution equals $v_\infty$ on $\mathbb{R}\setminus S$ and switches to the alternative phase value from $\{\alpha_0,\beta_0\}$ on $S$. Precisely,
\begin{equation}\label{phase-transition-solution}
v(x) \triangleq v_{\infty}\,\chi_{\mathbb{R}\setminus S}(x)+ v_{\mathrm{alt}} \chi_{S}(x),
\end{equation}
where  \(\chi_A(x)\) denotes the characteristic function of the set $A$. $v_\infty$ is determined by \eqref{v-infty}, and $v_{\mathrm{alt}}$ denotes the unique element in $\{\alpha_0,\beta_0\}\setminus\{v_\infty\}$, namely
\begin{equation}\label{valt}
v_{\mathrm{alt}}\in\{\alpha_0,\beta_0\}\setminus\{v_\infty\}.
\end{equation}
Here, $S$ and $\mathbb{R}\setminus S$ form a disjoint partition of $\mathbb{R}$ into Lebesgue measurable sets, with the measure of $S$ prescribed by
\begin{equation}\label{measure}
\mathrm{measure}(S)=\frac{|M|}{\beta_0-\alpha_0},\qquad L\triangleq\frac{|M|}{2(\beta_0-\alpha_0)}.
 \end{equation}
\end{theorem}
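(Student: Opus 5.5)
The first step is to fix the solution concept. The equation $-p_x(v)=0$ is meaningful only in a weak sense for discontinuous $v$, so I take it to mean that $p(v(x))$ is a.e. equal to a constant $\sigma$. Physical admissibility is imposed by requiring $v$ to take values in the stable set $\Omega_{\mathrm{stable}}$, and every jump to satisfy the Maxwell equal-area condition, i.e. the jump connects states with equal pressure and equal Gibbs energy $G(v)=F(v)+p(v)v$. The same conditions follow from the variational viewpoint: one minimizes $\int (F(v)-F(v_\infty)+\sigma_0(v-v_\infty))\,dx$ under the mass constraint, and the Lagrange multiplier forces $p(v)\equiv\sigma$.

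Next, the far field determines the constant. Since $v\to v_\infty$ as $x\to\pm\infty$ and $p$ is continuous, $\sigma=p(v_\infty)=\sigma_0$ by \eqref{v-infty}. The solutions of $p(v)=\sigma_0$ on $(b,\infty)$ are exactly three: $\alpha_0$, $\beta_0$, and a middle root $\gamma\in(\alpha,\beta)\subset\Omega_{\mathrm{unstable}}$. This follows from the monotonicity of $p$ on $(b,\alpha)$, $(\alpha,\beta)$ and $(\beta,\infty)$ together with $p(\alpha)<\sigma_0<p(\beta)$. The middle root is excluded by admissibility, either because it is unstable or because it fails the equal-area rule against $\alpha_0$ and $\beta_0$, which the strict inequality $\int_{\alpha_0}^{\gamma}(p-\sigma_0)\,dv\neq 0$ shows. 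Hence $v$ takes values in $\{\alpha_0,\beta_0\}$ a.e. Writing $S=\{v=v_{\mathrm{alt}}\}$, which is measurable, gives exactly the form \eqref{phase-transition-solution}.

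The mass condition then fixes the size of $S$: $M=\int(v-v_\infty)\,dx=(v_{\mathrm{alt}}-v_\infty)\,\mathrm{meas}(S)$. This yields \eqref{measure}. It also shows that the sign of $M$ selects $v_\infty$: $M>0$ forces $v_\infty=\alpha_0$, and $M<0$ forces $v_\infty=\beta_0$.

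For $M=0$ we get $\mathrm{meas}(S)=0$, so $v\equiv v_\infty$ a.e., and this solution is unique. Conversely, if $v\equiv v_\infty$ then $M=0$, which proves the ``if and only if''. For $M\neq0$, any measurable $S$ of the prescribed measure gives a solution, for example the intervals $[c-L,c+L]$ for every $c\in\mathbb{R}$, or unions of two disjoint intervals. These are infinitely many distinct solutions, and each satisfies the Maxwell rule \eqref{eq:Maxwell_equal_area} because its only values are $\alpha_0$ and $\beta_0$.

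The main obstacle is making the admissibility criterion precise enough to rule out the middle root $\gamma$ rigorously, and to rule out states in $\Omega_{\mathrm{metastable}}$; the latter is automatic here because $p$ must equal $\sigma_0$. I would use the energy characterization: on the constraint set, $\alpha_0$ and $\beta_0$ minimize $G$, and any admissible weak solution is a stationary point of the variational problem that is also stable under second variation. Second-variation stability requires $p'(v)\le0$ a.e., and this excludes $\gamma$ because $p'(\gamma)>0$.
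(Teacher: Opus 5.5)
Your argument is correct, but it takes a different route from the paper.

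\textbf{The paper's route.} It works with the constrained energy $E(v)=\int[W(v)+\sigma v]\,dx$ on the integral class $H$, and keeps $\sigma$ as a free Lagrange multiplier. The Euler--Lagrange equation gives $p(v)=\sigma$. The Weierstrass--Erdmann corner condition gives the equal-area relation $\int_{v^-}^{v^+}p\,ds=\sigma(v^+-v^-)$ at jumps. Uniqueness of the Maxwell construction then singles out $\alpha_0,\beta_0$ and $\sigma=\sigma_0$. Finally, the pointwise fact that $W(v)+\sigma_0 v$ attains its minimum over all $v>b$ exactly at $\alpha_0$ and $\beta_0$ shows that every such two-phase profile is a global minimizer. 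So in the paper the equal-area rule and the exclusion of the spinodal root are \emph{consequences} of minimality.

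\textbf{Your route.} You fix $\sigma=\sigma_0$ immediately from the prescribed far-field Maxwell state. You then count the three roots $\alpha_0<\gamma<\beta_0$ of $p=\sigma_0$ explicitly and remove $\gamma$ by admissibility. This is more elementary and explicit. Your second-variation justification ($\int -p'(v)\phi^2\,dx<0$ for zero-mean $\phi$ supported in $\{v=\gamma\}$) needs only local minimality. It also brings out a point the statement glosses over. Since $M=(v_{\mathrm{alt}}-v_\infty)\operatorname{meas}(S)$, the sign of $M$ determines $v_\infty$. A prescribed $v_\infty$ with $M$ of the opposite sign therefore admits no solution.

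\textbf{What the paper's route buys.} It gives genuine minimality of the two-phase profiles. You compare $G$ only among the roots of $p=\sigma_0$. If you adopt the variational criterion as your definition of admissibility, you still need the paper's comparison of $W+\sigma_0 v$ over all $v>b$ to know that the two-phase profiles actually satisfy it.

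\textbf{A small inconsistency.} You fix $\sigma$ using the pointwise limit $v\to v_\infty$, but an unbounded measurable $S$ of finite measure need not give a function with that limit. You can fix this in either of two ways:
\begin{itemize}
\item restrict $S$ (your interval examples are fine); or
\item read the far-field condition as $v-v_\infty\in L^1$. Then $\sigma=\sigma_0$ still follows, because otherwise $|v-v_\infty|$ would be bounded below a.e. by the positive distance from $v_\infty$ to the finite root set of $p=\sigma$.
\end{itemize}
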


\vspace{3mm}

As shown in the preceding discussion, the steady-state system \eqref{stationary-equation} subject to the constraint \eqref{law of mass conservation} admits infinitely many solutions provided that $M \neq 0$. We now address the following question: 
\vspace{3mm}

\textit{From a physical perspective, does a physically preferable candidate exist for these  two-phase solutions?}
\vspace{3mm}

\noindent
In fact, the conclusion of Theorem \ref{thm:existence_pw_solution} indicates that the formation of interfaces (density jumps) does not lead to an increase in energy. We therefore expect that, within a theoretical framework incorporating interfacial energy, the energy-minimizing two-phase solution to the steady-state system \eqref{stationary-equation} is the two-interface solution, that is, the solution possessing the minimal number of phase interfaces:
\begin{equation}\label{vstar}
\mathcal{V}\triangleq v_{\mathrm{alt}} \chi_{I_*}(x) + v_{\infty} \chi_{\mathbb{R} \setminus I_*}(x),
\end{equation}
where \(I_*\subset\mathbb{R}\) denotes an interval with \(\operatorname{meas}I_*=2L\), and \(\chi_\Omega(x)\) stands for the characteristic function of the set \(\Omega\).

Motivated by the numerical investigations carried out by Hsieh and Wang \cite{HW1997}, we add the artificial-viscosity term $\tilde\varepsilon v v_{xx}$ to \eqref{steady-state ns-lagrange}, where $\tilde{\varepsilon}>0$ denotes the artificial-viscosity coefficient, independent of the physical viscosity $\epsilon$. Throughout the paper, the product ${\tilde\varepsilon\epsilon}=\epsilon\tilde{\varepsilon}$ is treated as a single total-viscosity coefficient, never expanded as a product; it plays the role of the (small) diffusion coefficient of the regularized problem and is sent to zero in the artificial-viscosity limit ${\tilde\varepsilon\epsilon}\to0^+$, while the physical viscosity $\epsilon$ is held fixed. The same symbol also appears as the superscript of the solution $v^{\tilde\varepsilon\epsilon}$ simply as a label for the regularized solution at total viscosity ${\tilde\varepsilon\epsilon}$. We then construct an approximate system with periodic boundary conditions
\begin{equation}\label{Approximate ns-lagrange}
\left\{\begin{array}{llll}
\displaystyle -u_{x}=\tilde\varepsilon vv_{xx},& x\in [-n,n],\\
\displaystyle p_{x}(v)=\epsilon\big(\frac{u_{x}}{v}\big)_{x},& x \in [-n,n], \\
\displaystyle  (v,u)(-n)=(v,u)(n), 
\end{array}\right.
\end{equation}
supplemented by the integral constraint that the mean value of $v$ on $[-n,n]$ be prescribed, according to the finite-mass condition \eqref{law of mass conservation}, as
\begin{equation}\label{barv}
v_{\mathrm{mean}}\triangleq \frac1{2n}\int_{-n}^nvdx= \frac Lnv_{\mathrm{alt}}+\frac{n-L}nv_\infty,\qquad n>L,\end{equation}
It is easy to check, using \eqref{barv} and the Maxwell construction of Definition \ref{Maxwell}, that $\alpha_0<v_{\mathrm{mean}}<\beta_0$.

We now investigate the approximate system \eqref{Approximate ns-lagrange} subject to the constraint \eqref{barv}. Unfortunately, recent work by Chen et al. \cite{CHNPS2026} demonstrates that solutions to this system \eqref{Approximate ns-lagrange} fail to be unique. A large portion of these solutions are non-physical and cannot be realized in practice. It is therefore necessary to single out physically relevant solutions from a physical perspective. This naturally motivates us to seek admissible solutions by minimizing the global energy, which leads us to introduce the energy functional defined below.
\begin{equation}\label{J-v}
J(v)\triangleq\int_{-n}^{n}\bigg(-\int_{v_{\mathrm{mean}}}^{v(x)}p(s)\,ds+\frac{\tilde\varepsilon\epsilon}{2}v_{x}^2+\sigma v(x)\bigg)\mathrm{d}x,\qquad v(x)\in H^1_{\mathrm{per}}([-n,n]).
\end{equation}
Here \(\sigma\) denotes the Lagrange multiplier. Subject to the  constraint \eqref{barv}, we are naturally led to look for global minimizers of the functional given below:
\begin{equation}\label{Lagrange-multiplier-J}
\min_{v\in H^1_{\mathrm{per}}([-n,n])} J(v),
\end{equation}
where \(H^l_{\mathrm{per}}([-n,n])\) denotes the periodic Sobolev space
\begin{equation}\label{periodic-sobolev-space}
H^l_{\mathrm{per}}([-n,n])\triangleq \Big\{f\in H^l_{\mathrm{loc}}(\mathbb{R})\ \Big|\ f(x+2n)=f(x),\ \forall x\in\mathbb{R}\Big\},\quad l\geq0,
\end{equation}
so that each element is a $2n$-periodic function with locally square-integrable derivatives of order up to $l$.

With the foregoing preliminaries in place, we are now ready to present the definition of admissible   solution for the steady-state system \eqref{steady-state ns-lagrange}  subject to the condition  \eqref{law of mass conservation}.
\vspace{3mm} 

\begin{definition}[Admissible Phase Transition Solution]
\label{def:AdmissibleSolution}
A function $\mathcal{V}$ is called an admissible phase transition solution to system \eqref{stationary-equation} under the mass conservation constraint \eqref{law of mass conservation} if it can be obtained via a two-step limiting process starting from periodic variational minimizers.
For all $n>L$ and sufficiently small $0<\tilde{\varepsilon}<\tilde{\varepsilon}_0$, let $v^{\tilde{\varepsilon},n}$ denote the unique global energy minimizer, within each fixed symmetry class (unimodal or anti-unimodal), of problem \eqref{Lagrange-multiplier-J} on $[-n,n]$ subject to periodic boundary conditions. As $n\to\infty$, $v^{\tilde{\varepsilon},n}$ converges locally uniformly on $\mathbb{R}$ to a smooth steady viscous solution defined on the full real line, denoted by $v^{\tilde{\varepsilon}}$. Taking $\tilde{\varepsilon}\to0^+$, some subsequence of $\{v^{\tilde{\varepsilon}}\}$ converges pointwise almost everywhere on $\mathbb{R}$ and strongly in $L^1_{\mathrm{loc}}(\mathbb{R})$ to $\mathcal{V}$.
\end{definition}

\vspace{3mm} 

\begin{remark}
Admissible solutions are piecewise constant two-phase functions taking only the equilibrium specific volumes of the van der Waals fluid. They satisfy the Rankine-Hugoniot pressure condition at steady-state phase interfaces, preserve the total excess mass, and are thermodynamically consistent with the Maxwell equal-area construction.
\end{remark}
\vspace{3mm} 

\begin{figure}[htbp]
    \centering
    \subfigure[a piecewise smooth solution with a single-peak shape $\mathcal{V}_{\mathrm{single-peak}}(x)$]
    {
        \begin{minipage}[t]{0.45\textwidth}
            \centering
            \begin{tikzpicture}[>=stealth, scale=0.95]
            \def\L{2.0}    
            \def\valpha{1}
            \def\vbeta{2.5}
            \def\xmax{4.2}

            \draw[->] (-\xmax,0) -- (\xmax,0) node[right] {$x$};
            \draw[->] (0,-0.3) -- (0,\vbeta+0.9) node[above] {$v$};

            \draw[thick] (-\xmax,\valpha) -- (-\L,\valpha);
            \draw[thick] (-\L,\vbeta) -- (\L,\vbeta);
            \draw[thick] (\L,\valpha) -- (\xmax,\valpha);

            \draw[dashed, thick] (-\L,\valpha) -- (-\L,\vbeta);
            \draw[dashed, thick] (\L,\valpha) -- (\L,\vbeta);

            \node at (-\L,-0.38) {$-L$};
            \node at (\L,-0.38) {$L$};
            \node[anchor=west] at (0.3,\valpha-0.22) {$\alpha_0$};
            \node[anchor=west] at (0.3,\vbeta+0.25) {$\beta_0$};
            \end{tikzpicture}
        \end{minipage}
    }
    \hfill
    \subfigure[a piecewise smooth solution with a single-valley shape $\mathcal{V}_{\mathrm{single-valley}}(x)$]
    {
        \begin{minipage}[t]{0.45\textwidth}
            \centering
            \begin{tikzpicture}[>=stealth, scale=0.95]
            \def\L{2.0}    
            \def\valpha{1}
            \def\vbeta{2.5}
            \def\xmax{4.2}

            \draw[->] (-\xmax,0) -- (\xmax,0) node[right] {$x$};
            \draw[->] (0,-0.3) -- (0,\vbeta+0.7) node[above] {$v$};

            \draw[thick] (-\xmax,\vbeta) -- (-\L,\vbeta);
            \draw[thick] (-\L,\valpha) -- (\L,\valpha);
            \draw[thick] (\L,\vbeta) -- (\xmax,\vbeta);

            \draw[dashed, thick] (-\L,\valpha) -- (-\L,\vbeta);
            \draw[dashed, thick] (\L,\valpha) -- (\L,\vbeta);

            \node at (-\L,-0.38) {$-L$};
            \node at (\L,-0.38) {$L$};
            \node[anchor=west] at (0.3,\valpha-0.22) {$\alpha_0$};
            \node[anchor=west] at (0.3,\vbeta+0.12) {$\beta_0$};
            \end{tikzpicture}
        \end{minipage}
    }
    \caption{phase transition solution with two distinct interfaces}
    \label{model2figure1}
\end{figure}

With the structural characterization of periodic phase-transition solutions established above, we may construct the following sequence of functions defined on the entire real line via extension. 
\begin{equation}\label{v-n}
\mathcal{V}^{(n)}(x) \triangleq v_{\mathrm{alt}} \chi_{I_n}(x) + v_{\infty} \chi_{\R \setminus I_n}(x),
\end{equation}
where \(\chi_\Omega(x)\) denotes the characteristic function of the set \(\Omega\), and \(\{I_n\}\) is a sequence of intervals in \(\R\) with \(\operatorname{meas}I_n=2L\).
\vspace{3mm}

\vspace{3mm}
\begin{theorem}\label{thm:weak-compactness}
Consider the steady-state system \eqref{steady-state ns-lagrange} supplemented with the mass conservation constraint \eqref{law of mass conservation}, where \(M\neq 0\).
\begin{enumerate}[label=(\alph*)]
\item
There exists a constant \(\tilde\varepsilon_0 > 0\) such that for every \(n>L\) and all \(0 < \tilde\varepsilon < \tilde\varepsilon_0\), the constrained system \eqref{Approximate ns-lagrange} under condition \eqref{barv} admits, within each fixed symmetry class (unimodal or anti-unimodal), a unique smooth solution \((v^{\tilde\varepsilon,n},0)\). Each \(v^{\tilde\varepsilon,n}\) is a global minimizer of the variational problem \eqref{Lagrange-multiplier-J}, and as \(\tilde\varepsilon \to 0^+\), \(v^{\tilde\varepsilon,n}(x)\) converges pointwise to a piecewise constant two-phase function \(\mathcal{V}^{(n)}(x)\) for all \(x\in[-n,n]\) except at the two discontinuity points.

\item
For each fixed \(0<\tilde\varepsilon<\tilde\varepsilon_0\), as \(n\to\infty\) the family \(\{v^{\tilde\varepsilon,n}\}\) converges locally uniformly on \(\mathbb{R}\) to a smooth steady viscous solution \(v^{\tilde\varepsilon}\) defined on the whole real line.
Furthermore, as \(\tilde\varepsilon\to 0^+\), there exists a subsequence of \(\{v^{\tilde\varepsilon}\}\) that converges pointwise almost everywhere on \(\mathbb{R}\), and strongly in \(L^1_{\mathrm{loc}}(\mathbb{R})\), to a piecewise constant function
\begin{equation*}
\mathcal{V}(x) \triangleq v_{\mathrm{alt}} \chi_{I_*}(x) + v_{\infty} \chi_{\mathbb{R} \setminus I_*}(x),
\end{equation*}
where \(I_*\subset\mathbb{R}\) is an interval with \(\mathrm{meas}(I_*)=2L\), and \(\chi_{I_*}(x)\) stands for the characteristic function of the set \({I_*}\).
The limit satisfies the Rankine-Hugoniot pressure condition at the steady-state phase interfaces, is thermodynamically consistent with the Maxwell equal-area construction, and is an admissible phase transition solution in the sense of Definition \ref{def:AdmissibleSolution}.
\end{enumerate}
\end{theorem}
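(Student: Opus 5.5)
The approach is to reduce \eqref{Approximate ns-lagrange} to a scalar second-order ODE and then work variationally. From the first equation, $u_x=-\tilde\varepsilon v v_{xx}$, so $u_x/v=-\tilde\varepsilon v_{xx}$. Substituting into the second equation and integrating once gives
\[
p(v)+\tilde\varepsilon\epsilon\, v_{xx}=\sigma ,
\]
where $\sigma$ is a constant. Periodicity then forces $u_x\equiv 0$ in the limit statement, which explains the pair $(v,0)$ in the claim. This equation is exactly the Euler--Lagrange equation of $J$ in \eqref{J-v} under the mean constraint \eqref{barv}, with $\sigma$ the Lagrange multiplier.

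\textbf{Step 1 (existence of a minimizer).} Existence will follow by the direct method in $H^1_{\mathrm{per}}$ with $v>b$. To handle the singularity at $b$, I would truncate $p$ near $b$ and at infinity, and then remove the truncation with a maximum-principle argument. At a minimum point of $v$ we have $v_{xx}\ge0$, hence $p(v)\le\sigma$, and this keeps $v$ away from $b$; the maximum is treated symmetrically.

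\textbf{Step 2 (reduction to monotone profiles and uniqueness in each class).} Steiner symmetrization (rearrangement) on the circle preserves the mean and the potential term, and it does not increase $\int v_x^2$. Hence a minimizer can be taken symmetric and monotone on each half-period: increasing toward the centre in the unimodal class, decreasing in the anti-unimodal class. On such a profile the first integral
\[
\tfrac{\tilde\varepsilon\epsilon}{2}v_x^2=G(v)-G(v_{\min}),\qquad G(v)=\sigma v-\int^v p,
\]
describes the solution as a single oscillation of period $2n$ in the phase plane. Uniqueness within each class then reduces to a monotonicity statement: for fixed mean, the period map $(\sigma,v_{\min})\mapsto$ (period, mean) is a local diffeomorphism. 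I would prove this using the Maxwell structure, which places $\sigma$ near $\sigma_0$, together with a time-map estimate valid for $\tilde\varepsilon<\tilde\varepsilon_0$. \textbf{I expect this step to be the main obstacle.} My first attempt would be to show that $\sigma=\sigma_0+O(\tilde\varepsilon\epsilon)$ and that the time map is strictly monotone in the amplitude.

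\textbf{Step 3 ($\tilde\varepsilon\to0$ at fixed $n$).} Comparing with a competitor that has two transition layers of width $\sqrt{\tilde\varepsilon\epsilon}$ gives the energy bound $J\le \min J_0+C\sqrt{\tilde\varepsilon\epsilon}$. The Modica--Mortola inequality $\tfrac{\tilde\varepsilon\epsilon}{2}v_x^2+W\ge\sqrt{2\tilde\varepsilon\epsilon W}\,|v_x|$, with $W$ the double-well potential $-\int p+\sigma_0 v$ normalized so that its zeros are $\alpha_0$ and $\beta_0$, then yields uniform BV bounds. Helly's theorem gives a limit with values in $\{\alpha_0,\beta_0\}$, and the mean constraint fixes the measure of the set where the limit equals $v_{\mathrm{alt}}$ to be $2L$. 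Monotonicity on half-periods means there are exactly two jump points, so the limit is $\mathcal{V}^{(n)}$, and convergence holds pointwise away from the jumps.

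\textbf{Step 4 ($n\to\infty$, then $\tilde\varepsilon\to0$).} For fixed $\tilde\varepsilon$, the $C^2$ bounds coming from the ODE and the uniform bounds $v\in[\alpha_0-\delta,\beta_0+\delta]$ give local uniform convergence via Arzel\`a--Ascoli, with the centring fixed by symmetry. The limit is a homoclinic-type solution on $\mathbb{R}$ tending to $v_\infty$, and Fatou's lemma together with exponential decay preserves the mass $M$. Repeating Step 3 with the uniform BV bound (the energy bound is independent of $n$) gives a.e.\ and $L^1_{\mathrm{loc}}$ convergence to $\mathcal{V}$ with $I_*$ an interval of length $2L$.

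The Rankine--Hugoniot condition holds because $p(\alpha_0)=p(\beta_0)=\sigma_0$. Maxwell consistency comes from passing to the limit in the multiplier, $\sigma\to\sigma_0$, which is forced by the equal-area identity for the heteroclinic layer. Admissibility in the sense of Definition \ref{def:AdmissibleSolution} follows by construction.
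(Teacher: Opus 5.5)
The proposal breaks down at Step 4, the limit $n\to\infty$ at fixed $\tilde\varepsilon$, and the obstruction is structural.

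\textbf{Why no droplet survives on $\mathbb{R}$.} On the whole line with $v\to v_\infty$, your reduced equation forces $\sigma=p(v_\infty)=\sigma_0$. The first integral then reads $\tfrac{\tilde\varepsilon\epsilon}{2}v_x^2=W(v)$, where your double well $W\ge0$ vanishes exactly at $\alpha_0$ and $\beta_0$. A turning point would need $W(v)=0$, i.e.\ $v\in\{\alpha_0,\beta_0\}$. These are equilibria and cannot be reached at finite $x$. So every nonconstant solution is strictly monotone: a front $v_\infty\to v_{\mathrm{alt}}$, or an orbit leaving the admissible range. The only full-line solution with $v(\pm\infty)=v_\infty$ is therefore $v\equiv v_\infty$, and no homoclinic droplet carrying excess mass $M$ exists.

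\textbf{Why your mass argument does not close.} Fatou's lemma only gives $\int_{\mathbb{R}}|v^{\tilde\varepsilon}-v_\infty|\,dx\le\liminf_n\int_{-n}^{n}|v^{\tilde\varepsilon,n}-v_\infty|\,dx$. The exponential decay you cite is a property of the limit profile, not a tail bound uniform in $n$. So nothing prevents the excess mass from escaping, and it does escape. Your own estimates show this:
\begin{itemize}
\item the constant state $v\equiv v_{\mathrm{mean}}$ costs about $-p'(v_\infty)L^2(\beta_0-\alpha_0)^2/n$;
\item by Modica--Mortola, any periodic profile with a plateau near $v_{\mathrm{alt}}$ costs at least about $2\sqrt{2\tilde\varepsilon\epsilon}\int_{\alpha_0}^{\beta_0}\sqrt{W}$.
\end{itemize}
Hence, for fixed $\tilde\varepsilon$ and $n\gtrsim(\tilde\varepsilon\epsilon)^{-1/2}$, the global minimizers of Step 1 carry no $v_{\mathrm{alt}}$ plateau. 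They converge to $v^{\tilde\varepsilon}\equiv v_\infty$, and the second limit returns $v_\infty$, not $\mathcal V$. The same comparison shows that $\tilde\varepsilon_0$ in part (a) cannot be chosen independently of $n$.

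The paper's proof does not supply the missing ingredient either. It asserts that the transition layer ``does not spread''. After deriving $C_{\tilde\varepsilon}=p(v_\infty)$, it appeals to a ``single-peak homoclinic orbit'' returning to the background state. The phase-plane computation above rules that orbit out.

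\textbf{What your Step 3 does support.} It supports a diagonal limit rather than the iterated one. For each $n$, choose $\tilde\varepsilon_n$ so small that, after centring both functions at the middle of the $v_{\mathrm{alt}}$ plateau, $\|v^{\tilde\varepsilon_n,n}-\mathcal V^{(n)}\|_{L^1(-n,n)}<1/n$. By the energy comparison, this forces $\sqrt{\tilde\varepsilon_n\epsilon}\lesssim 1/n$. Then $v^{\tilde\varepsilon_n,n}\to\mathcal V$ in $L^1_{\mathrm{loc}}$ with $\meas(I_*)=2L$, because the mass bookkeeping is exact on $[-n,n]$. This route bypasses the full-line viscous profile $v^{\tilde\varepsilon}$ and the order of limits the statement asks for.

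\textbf{Two smaller points.}
\begin{itemize}
\item Uniqueness within a symmetry class, which can only hold up to translation, remains a plan in Step 2. The paper does not prove it either; it imports it through Lemma \ref{thm:approximation}.
\item Your remark that periodicity ``forces $u_x\equiv0$'' is incorrect. Integrating $-u_x=\tilde\varepsilon vv_{xx}$ over a period gives $\tilde\varepsilon\int v_x^2\,dx=0$. Your reduced equation gives $u_x=v(p(v)-\sigma)/\epsilon\neq0$ for nonconstant $v$. The reduced ODE, equivalently the Euler--Lagrange equation of $J$, must therefore be taken as the definition of the periodic problem, not derived from \eqref{Approximate ns-lagrange}.
\end{itemize}
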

\vspace{3mm}

\begin{remark}
Theorem \ref{thm:weak-compactness} rigorously constructs admissible phase-transition solutions for steady van der Waals fluids. The inviscid steady-state system possesses numerous weak solutions containing phase interfaces, yet many lack physical relevance. The viscous regularization in the approximate system imitates fluid dissipation, and the vanishing-viscosity limit implements a selection principle, analogous to the classical admissibility condition for shock waves.
Approximation on expanding bounded intervals $[-n,n]$ is a technical device to treat the unbounded domain $\mathbb{R}$. Physically, it reflects the localization of the phase transition layer: the transition region remains confined and does not diffuse as the computational domain expands, with uniform far-field fluid state away from the interface.
The resulting two-phase limit satisfies the Rankine--Hugoniot condition at steady-state interfaces and complies with the Maxwell equal-area rule. Consequently, the selected solution corresponds to the thermodynamic equilibrium state with minimal free energy, consistent with the variational structure of the model. This establishes a mathematically rigorous, thermodynamically consistent admissibility criterion for phase transitions in nonstrictly hyperbolic systems.
\end{remark}\vspace{3mm}

Without loss of generality, we translate the $x$-axis so that the discontinuities of $\mathcal{V}$ occur at $x=\pm L$. The profile of the limiting solution $\mathcal{V}$ takes the piecewise-constant form (see Figure 2)
\begin{align}
\mathcal{V}(x)\triangleq
\begin{cases}
v_\infty, & x\in I_{-\infty}\triangleq (-\infty,-L),\\
v_{\mathrm{alt}}, & x\in I_*\triangleq [-L,L],\\
v_\infty, & x\in I_{+\infty}\triangleq  (L,+\infty),
\end{cases}
\label{Phase-transition-solution}
\end{align}
where the parameter $L$ represents the width of the intermediate phase region and obeys
\begin{equation}\label{length-of-L}
L \triangleq  \frac{1}{2(\beta_0 - \alpha_0)}
\left|\int_{-\infty}^{+\infty} \big(v_0(x)-v_\infty\big)\mathrm{d}x\right|
= \frac{|M|}{2(\beta_0 - \alpha_0)}.
\end{equation}

Based on the admissible steady-state solutions to system \eqref{steady-state ns-lagrange}, we analyze the global well-posedness and large-time asymptotic stability of solutions to the corresponding time-dependent Navier--Stokes system \eqref{ns-lagrange}. The main difficulty comes from the non-monotonic, non-convex structure of the van der Waals pressure law \eqref{eq:vdw-eos}, which invalidates classical energy-entropy arguments for pointwise bound estimates and complicates the global existence and asymptotic analysis.

Accordingly, we impose initial data near the two Maxwell phase-equilibrium states $\alpha_0$ and $\beta_0$, which lie in the hyperbolic regimes $(b,\alpha)$ and $(\beta,+\infty)$ respectively, where $p'(v)<0$; the admissible range $[b+\underline{v},\bar{v}]$ in \eqref{K} nevertheless contains the elliptic unstable region $(\alpha,\beta)$ with $p'(v)>0$, which is precisely where the phase-transition layers develop. Phase transitions between different Maxwell states are achieved through jump discontinuities. For this reason, all solutions considered herein are formulated in the weak sense, as defined below.
\vspace{3mm}

\begin{definition}[Weak Solution]
Let $\mathbf{U}_0$ be a given initial function. We say that a vector-valued function $\mathbf{U}$ is a \emph{weak solution} to the parabolic system
\[
\mathbf{U}_t + \big(\mathbf{F}(\mathbf{U})\big)_x = \big(\mathbf{B}(\mathbf{U})\mathbf{U}_x\big)_x,\qquad \mathbf{U}(\cdot,0) = \mathbf{U}_0,
\]
if for all $t>0$ and all $\boldsymbol{\phi} \in \big(\mathscr{D}(\mathbb{R}^2)\big)^2$, there holds
\[
\begin{aligned}
&\int_{\mathbb{R}} \big(\mathbf{U}\cdot\boldsymbol{\phi}\big)(x,t)\,\mathrm{d}x - \int_{\mathbb{R}} \big(\mathbf{U}\cdot\boldsymbol{\phi}\big)(x,0)\,\mathrm{d}x \\
&\qquad -\int_{0}^{t}\int_{\mathbb{R}} \Big(\mathbf{U}\cdot\boldsymbol{\phi}_\tau + \mathbf{F}(\mathbf{U})\cdot\boldsymbol{\phi}_x\Big)\mathrm{d}x\mathrm{d}\tau
= -\int_{0}^{t}\int_{\mathbb{R}} \big(\mathbf{B}(\mathbf{U})\mathbf{U}_x\big)\cdot\boldsymbol{\phi}_x\,\mathrm{d}x\mathrm{d}\tau.
\end{aligned}
\]
\end{definition}
\vspace{3mm}

We derive hereafter the jump relations satisfied by discontinuous solutions. Let $(v,u)$ solve the  Navier-Stokes system \eqref{ns-lagrange}, and suppose this solution is smooth away from a discontinuity curve $x(t)$ propagating with speed $s = \mathrm{d}x/\mathrm{d}t$. Direct computation yields the following Rankine--Hugoniot conditions across the discontinuity:
\begin{equation}\label{rh-conditon1}
	-s\llbracket v\rrbracket=\llbracket u\rrbracket, \quad s\llbracket u\rrbracket=\llbracket p - \frac{\epsilon u_{x}}{v}\rrbracket,
\end{equation}
where $\llbracket\cdot\rrbracket$ denotes the standard jump operator across the discontinuity interface, defined as $\llbracket v\rrbracket = v(x^+, t) - v(x^-, t)$.
A classical result due to Hoff and Smoller \cite[Theorem 4.2]{HS1985} shows that the parabolic structure of the momentum equation in \eqref{ns-lagrange} instantaneously regularizes initial velocity discontinuities for all $t>0$, so that $\llbracket u\rrbracket = 0$. Substituting this identity into the jump conditions immediately gives $s=0$, which further reduces the second jump relation to
\begin{equation}\label{jump0}
  \bigl \llbracket p(v) - \frac{\epsilon u_{x}}{v}\bigr\rrbracket=0.
\end{equation}
As a result, any initial discontinuity in $v_0$ located at $x=y_i$ persists along the fixed particle path $x=y_i$ for all subsequent time, while the velocity field $u$ remains continuous for all $t>0$.    
Define $\Phi = \ln v$. Combining the system \eqref{ns-lagrange} with the simplified jump condition \eqref{jump0}, we arrive at
\begin{equation}\label{eq:jump-alpha0}
\bigl\llbracket \Phi \bigr\rrbracket_{t} = \left\llbracket \frac{u_{x}}{v} \right\rrbracket =\frac1{\epsilon} \bigl\llbracket p(v) \bigr\rrbracket = \frac1{\epsilon}\lambda_i(t)\bigl\llbracket \Phi \bigr\rrbracket,
\end{equation}
where the time-dependent coefficient $\lambda_i(t)$ is given by
\begin{equation}\label{eq:jump-alpha1}
\lambda_i(t)
= \dfrac{p(v^+)-p(v^-)}{\Phi(v^+)-\Phi(v^-)}
= \xi\, p'(\xi).
\end{equation}
Here $v^{\pm} = v(y_i^\pm ,t)$, and the intermediate value $\xi$ lies strictly between $v^-$ and $v^+$. Solving the resulting ordinary differential equation \eqref{eq:jump-alpha0} yields the explicit temporal evolution of the jump magnitude:
\begin{equation}\label{eq:jump-alpha2}
\llbracket \Phi(y_i,t) \rrbracket = \llbracket \Phi(y_i,0) \rrbracket \exp \left( \epsilon^{-1}\int_{0}^{t}\lambda_i (\tau) \mathrm{d}\tau \right).
\end{equation}
Note that $\lambda_i$ can be written as $\lambda_i = \llbracket p \rrbracket/\llbracket \Phi \rrbracket$. For a jump connecting the two Maxwell equilibrium states one has $p(\alpha_0)=p(\beta_0)$, hence $\llbracket p \rrbracket=0$ and therefore $\lambda_i\equiv 0$: Maxwell jumps are neutrally stable and persist for all time, which is consistent with the convergence of $v$ toward the discontinuous profile $\mathcal{V}$. Only jumps whose end states fail to satisfy the equal-area rule decay exponentially.

We now state the structural constraints imposed on the initial data. Since the initial profile is constructed as a perturbation of the phase-transition steady solution, the initial specific volume $v$ is allowed to admit large-amplitude variations. To characterize the admissible range of $v$, we introduce positive constants $\underline{v}$ and $\bar{v}$ satisfying the nested inclusion relation
\begin{equation}\label{K}
(\alpha_0,\beta_0)\subset\big[b+2\underline{v},\bar{v}-\underline{v}\big]\subset\mathbb{R}_+.
\end{equation}
For all $v\in [b+\underline{v}, \bar v]$, the derivatives associated with the van der Waals pressure function satisfy uniform boundedness
\begin{equation}\label{eq:vvv}
\begin{aligned}
-C^{-1}\leq p'(v)\leq C,\quad 
-C^{-1}\leq vp'(v)\leq C.
\end{aligned}	
\end{equation}

Combining the van der Waals pressure law \eqref{eq:vdw-eos} with the Maxwell point properties defined in Definition \ref{Maxwell}, we impose an additional structural assumption on the above bounded interval. There exist $\delta$-neighborhoods $U(\alpha_0,\delta)$ and $U(\beta_0,\delta)$ of the two Maxwell points such that
\begin{equation}\label{U-l,U-r}
U(\alpha_0,\delta),U(\beta_0,\delta) \subset \operatorname{int} \big[b+\underline{v}, \bar v\big],
\end{equation}
and the pressure function remains strictly decreasing within these neighborhoods:
\begin{equation}
\label{pressure in stable region}
p'(v) < 0 \quad \text{in } U(\alpha_0,\delta)\cup U(\beta_0,\delta).
\end{equation}

\noindent\textbf{\normalsize Notations.} Throughout this paper, we use the following notation for function spaces: $L^\infty(\mathbb{R})$ denotes the space of essentially bounded measurable functions on $\mathbb{R}$, endowed with the essential supremum norm
$
\|f\|_{L^\infty} \overset{\mathrm{def}}{=} \operatorname*{ess\,sup}_{x\in\mathbb{R}} |f(x)|.
$
For $q \geq 1$, $L^q(\mathbb{R})$ (abbreviated simply as $L^q$ when no ambiguity arises) is the space of $q$-th power integrable real-valued functions on $\mathbb{R}$, with norm
$
\|f\|_{L^q} \overset{\mathrm{def}}{=} \left( \int_{-\infty}^{+\infty} |f(x)|^q \, \mathrm{d}x \right)^{\frac{1}{q}}.
$
In the special case $q=2$, we write $\|f\|$ as a shorthand for $\|f\|_{L^2}$.
For a non-negative integer $l$, $H^l(\mathbb{R})$ (abbreviated as $H^l$ when the context is clear) denotes the Sobolev space of functions $f \in L^2(\mathbb{R})$ whose weak derivatives $\partial_x^j f$ for $j = 1, \dots, l$ are also square-integrable. The $H^l$ norm is defined by
$
\|f\|_l \overset{\mathrm{def}}{=} \left( \sum_{j=0}^l \|\partial_x^j f\|^2 \right)^{\frac{1}{2}},
$
where $\|\cdot\|$ stands for the $L^2$ norm introduced above.
 $\operatorname{Var}$ designates the  total variation, defined by 
$
\operatorname{Var} f
= \sup\{
\sum_{i=1}^{n} |f(x_i) - f(x_{i-1})|
\,\big|\,
-\infty<x_0<x_1<\dots<x_n<+\infty,\ n\in\mathbb{N}
\}$.
We further let $\|\cdot\|_{\text{pw}}$ denote the piecewise $L^2$ norm. Specifically, for all $t>0$, we define
\begin{equation}\label{eq:piecewise_L2_norm}
\|v_x(\cdot,t)\|^2_{\text{pw},L^2}
\triangleq \int_{-\infty}^{-L} v_x^2(x,t) \, \mathrm{d}x
+ \int_{-L}^{L} v_x^2(x,t) \, \mathrm{d}x
+ \int_{L}^{+\infty} v_x^2(x,t) \, \mathrm{d}x,
\end{equation}
which is well-defined provided that the piecewise discontinuous function $v(\cdot,t)$ satisfies
\[
v(\cdot,t)\in H^1(-\infty,-L) \cap H^1(-L,L) \cap H^1(L,+\infty).
\]
In addition, we adopt the standard notation for parabolic H\"older seminorms, defined as
\[
\langle w\rangle_\Omega^{a,b}
\triangleq \sup_{(x,t),\,(y,s)\in \Omega}
\frac{|w(x,t)-w(y,s)|}{|x-y|^a + |t-s|^b},
\]
for $a,b\in(0,1)$ and any domain $\Omega\subset\mathbb{R}\times\mathbb{R}_+$.
\vspace{3mm}

We now fix a constant $\vartheta \in (0,1)$ and set $\omega = \min\{1, t\}$; the symbol $\omega$ denotes the time-weight function (to be distinguished from the Lagrange multiplier $\sigma$ introduced in \eqref{J-v}). For solutions $(v,u)$ to system \eqref{ns-lagrange}, we introduce the following functionals:
\begin{subequations}\label{eq:energy-norms}
\begin{align}
A(t_0) &\triangleq \sup_{0\le t\le t_0} \left[ \|(v-\mathcal{V}, u)(\cdot,t)\|^2 + \|v_x(\cdot,t)\|_{\mathrm{pw},L^2}^2 \right]
+ \int_0^{t_0} \left( \|u_x(\cdot,t)\|^2 \right) \mathrm{d}t; \label{eq:norm-A} \\[6pt]
B(t_0) &\triangleq \sup_{0<t\le t_0}  \omega^{1/2}\|u_x(\cdot,t)\|^2   + \int_0^{t_0} \omega^{1/2+\vartheta} \left( \|u_t(\cdot,t)\|^2 + \left\|\left(\frac{\epsilon u_x}{v}\right)_x(\cdot,t)\right\|_{\text{pw},L^2}^2 \right) \mathrm{d}t \nonumber \\
&\quad + \int_0^{t_0} \omega  \|(u_x(\cdot,t))^2\|^2   \mathrm{d}t; \label{eq:norm-B} \\[6pt]
F(t_0) &\triangleq \sup_{0<t\le t_0}  \omega^{3/2} \left( \|u_t(\cdot,t)\|^2 + \left\|\left(\frac{\epsilon u_x}{v}\right)_x(\cdot,t)\right\|_{\text{pw},L^2}^2 \right)  + \int_0^{t_0} \int  u_{xt}^2   \mathrm{d}x\,\mathrm{d}t. \label{eq:norm-F}
\end{align}
\end{subequations}
We also define $D(t_0)$ and $G(t_0)$ to be the same as $B(t_0)$ and $F(t_0)$, but with $\omega$ taken to be identically one. 
Throughout this paper, $C$ stands for a generic positive constant which may take different values on different lines, consistent with the conventional notation specified above.
\vspace{3mm}

\begin{theorem}\label{thm:large-time-behavior}
\textbf{(Asymptotic Behavior of Global Weak Solutions)}
Let the pressure $p$ satisfy the van der Waals equation of state \eqref{eq:vdw-eos}. Let $\alpha_0,\beta_0$ denote the Maxwell points defined in Definition \ref{Maxwell}. Moreover, let $[b+\underline v,\bar v]$ and $[b+2\underline v,\bar v-\underline v]$ stand for the solution and initial-data intervals, respectively, and let $U(\alpha_0,\delta)$, $U(\beta_0,\delta)$ be the neighbourhoods introduced in \eqref{U-l,U-r}, the admissible range of $v$ being fixed by \eqref{K}. \(v_\infty\) and \(v_{\mathrm{alt}}\) denote the two Maxwell stable states given by \eqref{v-infty} and \eqref{valt}, respectively,
the admissible phase transition solution  $\mathcal{V}$ of \eqref{steady-state ns-lagrange} is given by \eqref{Phase-transition-solution}. There exist positive constants \(\delta_0, T_*, C\) such that if the initial data \((v_0,u_0)\) satisfy
\begin{enumerate}
    \item[(i)] \(v_0\in \big[b+2\underline{v},\bar{v}-\underline{v}\big]\) almost everywhere;
    \item[(ii)] The initial perturbation is small in the energy, gradient and integral senses:
    \begin{equation}\label{C0}
      \|v_0-\mathcal{V}, u_0\|^2 + \|(v_0)_x\|_{\mathrm{pw},L^2}^2 + (\operatorname{Var}u_0)^2 + \|v_0-\mathcal{V}\|_{L^1}\leq\delta_0;
    \end{equation}
    \item[(iii)] The initial perturbed mass satisfies global finiteness and local balance conditions:
    \begin{equation}\label{mass-initial}
    \int_{-\infty}^{+\infty}\big(v_0(x)-\mathcal{V}(x)\big)\mathrm{d}x = 0,\qquad
    \int_{-L}^{L}\big(v_0(x)-v_{\mathrm{alt}}\big)\mathrm{d}x = 0;
    \end{equation}
\end{enumerate}
then the system \eqref{ns-lagrange} admits a global weak solution \((v,u)\) with initial data \((v_0,u_0)\) satisfying the following properties:
\begin{align}
\label{eq:v_constraint}
\begin{split}
&v \in [b+\underline{v},\bar{v}],\quad \forall\, 0 \le t \le T_*, \\
&v \in
\begin{cases}
U(v_{\infty},\delta), & x\in(-\infty,-L)\cup(L,+\infty), \\
U\big(v_{\mathrm{alt}},\delta\big), & x\in(-L,L),
\end{cases}
\quad \forall\, t \ge T_*.
\end{split}
\end{align}
Moreover, the solution satisfies the uniform a priori bound
\begin{equation}\label{eq:e0-b1}
  A(t)+B(t)+D(t) \le C\delta_0,\quad \forall\,t\ge 0,
\end{equation}
and the large-time convergence
\begin{equation}\label{large-time}
\lim_{t\to\infty}\big\|(v-\mathcal{V},u)(\cdot,t)\big\|_{L^\infty(\mathbb{R})}=0.
\end{equation}
Here the convergence is understood piecewise: $v(\cdot,t)-\mathcal{V}\to 0$ in $L^\infty(I_{-\infty})$, $L^\infty(I_*)$ and $L^\infty(I_{+\infty})$ separately, and the magnitudes of the jump amplitudes $\bigl|\llbracket v \rrbracket(\pm L,t)\bigr|$ converge to $|v_{\mathrm{alt}}-v_\infty|$ as well. This is consistent: by the jump analysis above, $\llbracket u \rrbracket=0$ for $t>0$ while $\llbracket v \rrbracket$ persists along the fixed particle paths $x=\pm L$.
\end{theorem}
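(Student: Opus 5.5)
The proof follows Hoff's framework for discontinuous solutions of the compressible Navier--Stokes equations \cite{Hoff1991,Hoff1992,HK1993}, adapted to the piecewise structure around $\mathcal{V}$.

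\textbf{Step 1: local existence and the effective viscous flux.} We mollify the initial data separately on $I_{-\infty}$, $I_*$ and $I_{+\infty}$, keeping the jumps at $x=\pm L$. We then obtain approximate solutions on a short time interval $[0,T_*]$ on which $v\in[b+\underline v,\bar v]$; this is possible because $v_0$ lies in the strictly smaller interval $[b+2\underline v,\bar v-\underline v]$. The central object is the effective viscous flux $F\triangleq \epsilon u_x/v-p(v)+\sigma_0$. By \eqref{jump0} it is continuous across $x=\pm L$, and it satisfies $F_x=u_t$.

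The functionals $A$, $B$ and $F$ are estimated by the standard Hoff hierarchy.
\begin{itemize}
\item For $A$, we use the energy identity with the relative potential $\int_{\mathcal{V}}^{v}(\sigma_0-p(s))\,ds$. The Maxwell rule \eqref{eq:Maxwell_equal_area} and the mass conditions \eqref{mass-initial} make this potential, up to a boundary term that vanishes, comparable to $|v-\mathcal{V}|^2$ while $v$ stays in $U(v_\infty,\delta)$ or $U(v_{\mathrm{alt}},\delta)$.
\item For $B$, we multiply the momentum equation by $\omega^{1/2}u_t$.
\item For $F$, we differentiate in $t$ and multiply by $\omega^{3/2}u_t$.
\end{itemize}
The small quantity $(\operatorname{Var}u_0)^2$ is what allows the weights $\omega^{1/2}$ rather than $\omega$.

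\textbf{Step 2: pointwise control of $v$.} This is the main obstacle, because $p$ is non-monotone and $v$ ranges over the elliptic region before $T_*$. Writing $\Phi=\ln v$, on each fixed particle path we have
\[
\epsilon\,\Phi_t=\epsilon\frac{u_x}{v}=F+p(v)-\sigma_0 .
\]
This is an ODE for $\Phi$ along each particle path, forced by $F$. Since $F$ is continuous and, by $\|F\|_{L^\infty}^2\le \|F\|\,\|F_x\|$, small in $L^\infty_tL^1$-type norms after $B$ and $F$ are bounded, the forcing is small.

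For $t\le T_*$, a Gronwall bound keeps $v$ within $\underline v$ of its initial range. For $t\ge T_*$, we need $v$ to lie in the stable wells $U(v_\infty,\delta)$, $U(v_{\mathrm{alt}},\delta)$ on the correct pieces. To get this I would try to show that the $L^2$ and $L^1$ smallness in \eqref{C0}, together with the piecewise $H^1$ bound, forces $v$ pointwise close to $\mathcal{V}$ on each piece, since $\|v-\mathcal{V}\|_{L^\infty(I)}^2\le C\|v-\mathcal{V}\|\,\|v_x\|_{\mathrm{pw},L^2}$. The difficulty is that $\|v_x\|_{\mathrm{pw},L^2}$ is only small, not tiny. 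If that is insufficient, the fallback is to prove directly an attracting property of the ODE: for small forcing, trajectories starting near $\alpha_0$ or $\beta_0$ relax into the $\delta$-wells, because $p'(v)<0$ there by \eqref{pressure in stable region}. Then only the first time interval needs the crude bound.

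A companion piecewise estimate for $v_x$ comes from differentiating the same relation: $\epsilon(\ln v)_{xt}=F_x+p'(v)v_x$. Once $v$ is in the wells, $p'(v)<0$ gives damping, which closes the bound on $\|v_x\|_{\mathrm{pw},L^2}$.

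\textbf{Step 3: closing and passing to the limit.} The estimates close by continuity to give \eqref{eq:e0-b1} with constants independent of the mollification. The H\"older bounds on $u$ and $F$ that follow from $B,F,D,G$ then let us pass to the limit and obtain a global weak solution satisfying \eqref{eq:v_constraint}.

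\textbf{Step 4: large-time behavior.} From $\int_0^\infty(\|u_x\|^2+\|u_t\|^2+\|F_x\|^2)\,dt<\infty$ and the uniform bounds on the time derivatives of these norms, we get $\|u_x\|,\|F\|\to0$. Hence $\|u\|_{L^\infty}\to0$ and $\|F\|_{L^\infty}\to0$.

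For $v$, we return to the energy identity. On each piece, $\sigma_0-p(v)$ is comparable to $\mathcal{V}-v$, and $\int(v-\mathcal{V})(p(v)-\sigma_0)\,dx$ appears as a dissipative term; this comes from testing the momentum equation with the antiderivative of $v-\mathcal{V}$, which the mass conditions make admissible. Integrability in time then gives $\|v-\mathcal{V}\|\to0$. Interpolating with the bounded piecewise $H^1$ norm yields piecewise $L^\infty$ convergence, and hence convergence of the jump amplitudes at $\pm L$ to $|v_{\mathrm{alt}}-v_\infty|$.
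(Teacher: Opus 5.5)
Your Steps 1--3 follow the same architecture as the paper: local existence, then a priori bounds under the assumption that $v$ lies in the wells, then continuation in time. Your relative potential $\int_{\mathcal{V}}^{v}(\sigma_0-p(s))\,\mathrm{d}s$ is a cleaner substitute for the paper's $W_{v_\infty}$, $W_{v_{\mathrm{alt}}}$ plus principal-value argument. Because $p(\alpha_0)=p(\beta_0)=\sigma_0$ and $u$, $F$ are continuous across $\pm L$, the energy identity closes with no use of the equal-area rule or the mass conditions.

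The hedge in Step 2 is unnecessary. Once $A\le C\delta_0$, both $\|v-\mathcal{V}\|$ and $\|v_x\|_{\mathrm{pw},L^2}$ are $O(\delta_0^{1/2})$. The piecewise Sobolev inequality (with an extra $\|v-\mathcal{V}\|^2$ term on the bounded piece $I_*$) then gives $|v-\mathcal{V}|\le C\delta_0^{1/2}$. This is exactly how the paper places $v$ in the wells. The only point to watch is that the damped $v_x$ estimate presupposes the wells; you close this by continuity in time, while the paper uses short-time bounds without damping and iterates.

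The genuine gap is the last paragraph of Step 4. Testing the momentum equation with $\Psi(x,t)=\int_{-\infty}^{x}(v-\mathcal{V})\,\mathrm{d}y$ does not make $\|v-\mathcal{V}\|^2$ integrable in time. Since $\Psi_x=v-\mathcal{V}$, $\Psi_t=u$ and $u_t=F_x$, this test gives exactly
\[
\int_0^T\!\!\int_{\mathbb{R}}(\sigma_0-p(v))(v-\mathcal{V})\,\mathrm{d}x\,\mathrm{d}t
=\int_0^T\|u\|^2\,\mathrm{d}t-\epsilon\int_0^T\!\!\int_{\mathbb{R}}\frac{u_x}{v}(v-\mathcal{V})\,\mathrm{d}x\,\mathrm{d}t-\Big[\int_{\mathbb{R}}u\,\Psi\,\mathrm{d}x\Big]_0^T .
\]
The good term on the left is balanced by $+\int_0^T\|u\|^2\,\mathrm{d}t$. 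None of your functionals controls this; they give $\int_0^\infty\|u_x\|^2\,\mathrm{d}t$, not $\int_0^\infty\|u\|^2\,\mathrm{d}t$. Moreover, $\int u\Psi\,\mathrm{d}x$ is not controlled by the hypotheses: $\|v_0-\mathcal{V}\|_{L^1}\le\delta_0$ and zero mass make $\Psi$ bounded, but not necessarily square integrable.

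This is not a loss that a sharper estimate could repair. $\int u\,\mathrm{d}x$ is conserved, since its flux $F$ is continuous across $\pm L$ and vanishes at $\pm\infty$. Nothing in (ii)--(iii) forces $\int u_0\,\mathrm{d}x=0$. Already for the system linearized about $v_\infty$, such data produce two diffusion waves of opposite $v$-mass, with $\|u\|^2$ and $\|v-\mathcal{V}\|^2$ of order $t^{-1/2}$. So $\|v-\mathcal{V}\|^2\notin L^1(0,\infty)$ in general. For the same reason, your assertion $\|F\|\to0$ does not follow from the time integrals you list.

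What the conclusion needs is decay of derivatives, not of $\|v-\mathcal{V}\|$. There are two ways to get it.
\begin{enumerate}
\item The paper's route. It proves $\|v_x\|_{L^2(I_i)}\to0$ on each piece from the damped equation $(v_x/v)_t=u_t+vp'(v)\,(v_x/v)$. Here $vp'(v)\le-1/C$ in the wells and $u_t\in L^2(0,\infty;L^2)$, which gives a linear ODE for $g_i=\frac12\int_{I_i}(v_x/v)^2\,\mathrm{d}x$. On the half-lines it then interpolates against the merely bounded $\|v-\mathcal{V}\|$. On the bounded piece $I_*$, where $\|w\|_{L^\infty}^2\le2\|w\|\,\|w_x\|$ fails, it fixes the traces $v(\pm L^{\mp},t)\to v_{\mathrm{alt}}$ through the jump condition \eqref{jump0} and $\|u_x\|_{L^\infty}\to0$. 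Only then does it use $\int_{I_*}|v_x|\,\mathrm{d}x\to0$.
\item A route using only what is already in your Step 4.
\begin{itemize}
\item $\|F\|_{L^\infty}^2\le\|F\|\,\|u_t\|\to0$, which needs only $\|F\|$ bounded.
\item $\|u_x\|_{L^\infty}\to0$ follows by piecewise interpolation of $u_x/v$ against the bounded $\|(u_x/v)_x\|_{\mathrm{pw},L^2}$.
\item Hence $\sup_x|p(v)-\sigma_0|\le\|F\|_{L^\infty}+\epsilon\|u_x/v\|_{L^\infty}\to0$.
\item Strict monotonicity of $p$ on $U(\alpha_0,\delta)$ and $U(\beta_0,\delta)$ then yields uniform convergence of $v$ to $\mathcal{V}$ on each piece, including the jump amplitudes at $\pm L$.
\end{itemize}
\end{enumerate}
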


\begin{remark}
We interpret the physical meaning of the three initial assumptions as follows. Condition (i) is a physical feasibility constraint, which excludes vacuum and over-compressed states, ensuring that the density remains physically admissible for all time. Condition \eqref{C0} in (ii) is a standard small-perturbation hypothesis, which restricts the initial deviation from the piecewise steady state \(\mathcal{V}(x)\) in energy, gradient and integral norms. It guarantees that the initial flow field and density stratification are mild, so the solution maintains stability and does not generate large nonlinear oscillations.
Condition (iii) is essential for the large-time asymptotic analysis. The perturbed mass has zero net value, \(\int_{-\infty}^{+\infty}(v_0-\mathcal{V})\,\mathrm{d}x=0\), and the zero-mass-balance condition \(\int_{-L}^{L}(v_0-v_{\mathrm{alt}})\mathrm{d}x=0\) means that the core domain \((-L,L)\) possesses no net mass deviation at the initial time. Together, these zero-mass-balance conditions ensure that the perturbation is mass-neutral in every region, so the interior mass deviation is purely dynamical. Mathematically, they completely eliminate the time-independent initial residual term and reduce the interior mass deviation to
\[
\int_{-L}^{L}\big(v(x,t)-v_{\mathrm{alt}}\big)\mathrm{d}x=\int_{0}^{t}\int_{-L}^{L}u_x(x,\tau)\mathrm{d}x\mathrm{d}\tau.
\]
As a result, the dynamical deviation of the interior density is purely induced by the temporal evolution of the velocity gradient \(u_x\). Combined with the decay property \(u_x\to0\) as \(t\to\infty\), it yields the uniform convergence of \(v\) toward the steady state \(v_{\mathrm{alt}}\) inside the core region.
\end{remark}
\vspace{3mm}

The complete proofs of the main results of this paper, namely Theorems \ref{thm:existence_pw_solution}--\ref{thm:large-time-behavior}, are presented in Sections \ref{sec2}--\ref{sec4}. Our entire argument is built upon four core technical ingredients, developed in Sections \ref{sec2}--\ref{sec4} as elaborated below.

Section \ref{sec2} constructs physically consistent phase-transition solutions to the steady-state Navier--Stokes system \eqref{steady-state ns-lagrange}, establishing the main results stated in Theorems \ref{thm:existence_pw_solution}--\ref{thm:weak-compactness}. The proof of Theorem \ref{thm:existence_pw_solution} is carried out within a variational framework. Subject to the mass conservation constraint \eqref{law of mass conservation}, we formulate a well-posed energy functional and employ variational arguments to construct piecewise smooth solutions featuring jump discontinuities connecting distinct Maxwell equilibrium states.
The proof of Theorem \ref{thm:weak-compactness} relies on rigorous compactness analysis built upon a double limiting procedure. We start by regularizing the periodic boundary value problem and establishing uniqueness for its smooth solutions. To exclude nonphysical spurious solutions, we introduce an artificial viscosity depending on the specific volume and construct approximate solutions as global minimizers of the regularized energy functional. These energy-minimizing minimizers are taken to be the physically admissible solutions of the periodic system.
We then implement the limiting steps in a carefully chosen order: we first extend the periodic domain to the entire real line, and subsequently take the vanishing viscosity limit. For fixed artificial viscosity, integration using periodicity reduces the governing system to a single autonomous ordinary differential equation. Combining the variational minimality of solutions with the bistable structure of the van der Waals isotherm, we derive uniform a priori estimates independent of the domain size, which yield equicontinuity for the family of solutions. Applying the Arzel\`a--Ascoli theorem together with a diagonal extraction argument, we obtain viscous solutions defined globally on $\mathbb{R}$ that converge locally uniformly on compact sets. These limiting solutions satisfy the steady-state equations, exhibit exponential decay at infinity, and are unique up to translation.
We next address the vanishing viscosity regime. The single-peaked structure of periodic admissible solutions furnishes uniform total variation bounds, enabling us to invoke the Helly selection principle to extract pointwise convergent subsequences. The resulting limit lies in the space of functions with locally bounded variation. As the artificial viscosity tends to zero, the diffusive term vanishes asymptotically, forcing the pressure to be constant in the limit. When combined with the van der Waals equation of state, this leads to a piecewise constant two-phase configuration. The limiting pressure matches the Maxwell saturation pressure, so steady-state phase interfaces satisfy the Rankine-Hugoniot jump conditions, and the mass conservation constraint remains valid through the limiting process. Instead of relying on a weighted weak convergence framework, the present analysis yields stronger convergence statements and offers a thermodynamically consistent selection principle for physical phase-transition solutions.

Sections \ref{sec3}--\ref{sec4} complete the remaining two core technical analyses of this work, focusing on the time-asymptotic stability of the admissible phase-transition solutions constructed in Section \ref{sec2}. Section \ref{sec3} establishes the well-posedness of discontinuous weak solutions for the unsteady Navier--Stokes system \eqref{ns-lagrange}. We adopt a staggered half-step central difference semi-discrete scheme to rigorously verify the existence of locally discontinuous weak solutions to the time-dependent governing system. In Section \ref{sec4}, we develop a global nonlinear stability framework for the obtained discontinuous solutions. The full spatial domain is decomposed along solution discontinuity interfaces, and refined piecewise energy estimates are derived specifically for unsteady discontinuous flows. Under small initial perturbations, we obtain uniform global energy bounds that control the perturbation dynamics for all time. By integrating classical energy integral methods with standard Sobolev embedding inequalities, we finally prove the time-asymptotic stability of the steady-state phase-transition solutions.

\section{Admissible Steady-State Phase Transition Solution}\label{sec2}
\hspace{2em} This section addresses well-posedness for the steady-state system \eqref{stationary-equation} under the finite-mass perturbation constraint \eqref{law of mass conservation}. Following Definition \ref{def:AdmissibleSolution}, we verify that the solution to \eqref{Phase-transition-solution} is an admissible steady-state phase-transition profile consistent with physically realistic phase separation.
We start with variational arguments to solve the system \eqref{stationary-equation} with \eqref{law of mass conservation} and find all admissible piecewise smooth solutions. This yields the full proof of Theorem \ref{thm:existence_pw_solution}.
\vspace{3mm}

\noindent\textbf{Proof of Theorem \ref{thm:existence_pw_solution}.}
We first present the equivalent variational formulation for \eqref{stationary-equation}. For this purpose, we introduce the following function space and corresponding energy functional for the steady-state system \eqref{stationary-equation} under the finite-mass perturbation constraint \eqref{law of mass conservation}. Define the admissible function class
\begin{equation}\label{minimizer problem}
  H\triangleq\Big\{ v(x)\,\Big|\, v(x)-v_\infty\in L^2(\mathbb{R}),\int_{-\infty}^{+\infty}(v(x)-v_\infty)\mathrm{d}x=M \Big\}.
\end{equation}
Here the integral condition implicitly requires $v-v_\infty\in L^1(\mathbb{R})$.
Associated with the pressure potential, we introduce the Helmholtz free-energy density under isothermal conditions
\begin{equation}\label{W-vdw}
W(v)\triangleq-\int_{v_\infty}^{v}p(s)ds, \qquad v(x)\in H.
\end{equation}
We incorporate the mass constraint via a Lagrange multiplier $\sigma$ and introduce the energy functional
\begin{equation}\label{Lagrange multiplier}
E(v)\triangleq\int_{-\infty}^{+\infty}\Big[ W(v(x))+\sigma v(x)\Big]\mathrm{d}x, \qquad v\in H,
\end{equation}
where, since $v\to v_\infty$ as $|x|\to\infty$ for every $v\in H$, the functional is understood up to the common constant $\int_{-\infty}^{+\infty}[W(v_\infty)+\sigma v_\infty]\,\mathrm{d}x$; equivalently, one may replace the integrand by $W(v)-W(v_\infty)+\sigma(v-v_\infty)$.
Any minimizer of this functional satisfies the Euler-Lagrange equation together with the Weierstrass-Erdmann corner conditions:
\begin{equation}\label{ELWE}
\begin{cases}
W'(v) = -\sigma \text{ at every continuity point of } v,\\[4pt]
W(v)+\sigma v \text{ remains continuous across jump discontinuities of } v.
\end{cases}
\end{equation}
These conditions indeed force every minimizer to be either a globally constant single-phase state or a piecewise constant two-phase profile. To see this, note first that the Euler--Lagrange equation gives $p(v(x))=\sigma$ at every continuity point of $v$. Since the van der Waals pressure is non-monotone on the spinodal interval $(\alpha,\beta)$, the equation $p(v)=\sigma$ possesses at most three real solutions; hence a minimizer attains only finitely many values. Let a jump from $v^-$ to $v^+$ occur at some point. By the Weierstrass--Erdmann condition, the quantity $W(v)+\sigma v$ is continuous across the jump, i.e. $W(v^+)-W(v^-)+\sigma(v^+-v^-)=0$. Since $W'=-p$ and $p(v^\pm)=\sigma$, integrating $W'=-p$ from $v^-$ to $v^+$ yields
\begin{equation*}
\int_{v^-}^{v^+} p(s)\,\mathrm{d}s=\sigma\,(v^+-v^-),
\end{equation*}
which is precisely the equal-area condition. Among the at most three roots of $p(v)=\sigma$, the two stable states that can be connected by a nondegenerate equal-area jump are exactly the Maxwell equilibrium states $\alpha_0,\beta_0$ of Definition \ref{Maxwell}, for which the equal-area rule \eqref{Maxwell condition} holds with $\sigma=\sigma_0=p(\alpha_0)=p(\beta_0)$. A minimizer can therefore jump only between $\alpha_0$ and $\beta_0$, and is consequently either the constant state $v\equiv v_\infty$ or a piecewise constant two-phase profile. Recalling that $v_\infty$ equals either $\alpha_0$ or $\beta_0$, the solution takes the far-field value $v_\infty$ on one domain component and the alternate phase value on the other. We write
\begin{equation}\label{pw solution}
v(x)=
\begin{cases}
v_\infty, & x\in \mathbb{R}\backslash S,\\[4pt]
v_{\text{alt}}, & x\in S,
\end{cases}
\end{equation}
where $v_{\text{alt}}\in\{\alpha_0,\beta_0\}\setminus\{v_\infty\}$, \( S\subset\mathds{R}\) is a Lebesgue measurable set, and the Lebesgue measure of  $S$ is fixed by the finite-mass condition:
\begin{equation}\label{measure2}
  \mathrm{measure}(S)=\frac{|M|}{\beta_0 - \alpha_0},
\end{equation} 
where $M=\int_{-\infty}^{+\infty}(v-v_\infty)\mathrm{d}x$ denotes the (signed) mass perturbation, and the phase parameters $\alpha_0,\beta_0$ are determined by the standard Maxwell conditions (see Figure 1):
\begin{equation}\label{Maxwell condition}
\begin{cases}
\displaystyle\int_{\alpha_0}^{\beta_0}p(s)\mathrm{d}s = \sigma_0(\beta_0 - \alpha_0),\\
\displaystyle\sigma_0 = p(\beta_0) = p(\alpha_0).
\end{cases}
\end{equation}
Setting
\begin{equation}\label{L}
  L\triangleq\frac12\mathrm{measure}(S),
\end{equation}
we find $L>0$ if and only if $M\neq0$. This yields the necessary condition for the existence of nontrivial two-phase solutions
\begin{equation}\label{condition for the existence}
  M\neq0.
\end{equation}
Under this condition \eqref{condition for the existence}, every piecewise constant function of the form \eqref{pw solution} with $L$ defined by \eqref{L} serves as a global minimizer of the variational problem \eqref{Lagrange multiplier}. To confirm the global minimality, take $\sigma=\sigma_0$. The Maxwell equal-area rule \eqref{Maxwell condition} gives $W(\beta_0)-W(\alpha_0)=-\int_{\alpha_0}^{\beta_0}p(s)\,\mathrm{d}s=-\sigma_0(\beta_0-\alpha_0)$, hence $W(\alpha_0)+\sigma_0\alpha_0=W(\beta_0)+\sigma_0\beta_0$. Moreover, the monotone--unstable--monotone structure of the van der Waals isotherm ensures that $W(v)+\sigma_0 v$ attains its global minimum over $v>b$ exactly at $v=\alpha_0$ and $v=\beta_0$. Consequently, for every admissible $v\in H$ and every two-phase function $V$ of the form \eqref{pw solution}, pointwise integration gives $E(v)\ge E(V)$, with equality attained by $V$. Thus each such piecewise constant function is indeed a global minimizer of \eqref{Lagrange multiplier}. In contrast, if $M=0$, no two-phase structure can arise, and the unique minimizer is the trivial uniform state
\begin{equation}\label{trivial solution}
  v(x)\equiv v_\infty.
\end{equation}
Finally, the variational problem \eqref{Lagrange multiplier} is equivalent to the steady-state system \eqref{stationary-equation}: indeed, the Euler--Lagrange equation together with the equal-area jump condition derived above is exactly the weak formulation of \eqref{stationary-equation}, i.e. $p_x(v)=0$ away from interfaces together with the Rankine--Hugoniot pressure condition at phase interfaces. From this equivalence we immediately obtain the proof of Theorem \ref{thm:existence_pw_solution}.
$\hfill\square$
\vspace{3mm}

The above analysis fully characterizes all piecewise-smooth steady-state solutions and delivers a rigorous mathematical derivation of the Maxwell equal-area rule as Definition \ref{Maxwell}. Even so, infinitely many piecewise-smooth solutions are obtained in this way. Mathematically speaking, the system \eqref{stationary-equation} admits interfaces with density jumps without any increase in energy. In physical observations, however, density jumps actually take the form of interfaces with finite thickness over a sufficiently long time. Consequently, we expect that two-interface two-phase solutions minimize the total energy when interfacial energy is incorporated into the theory.

We recall the following results of Chen et al. \cite{CHNPS2026,CPSW2026}, which underpin the admissible-solution construction below. Since these papers were accepted for publication when the present work was written, we state their relevant conclusions explicitly as Lemmas \ref{CHNPS-1} and \ref{thm:approximation} and rely on them only through these statements; the novelty of this paper does not reside in the proofs of these lemmas, but in the time-asymptotic stability analysis of Sections \ref{sec3}--\ref{sec4}. Employing variational methods, Chen et al. established the equivalence between the variational formulation \eqref{Lagrange-multiplier-J} and the system \eqref{Approximate ns-lagrange} subject to \eqref{barv}. Moreover, for sufficiently small $\tilde\varepsilon>0$, the constrained minimization problem \eqref{Lagrange-multiplier-J} possesses a global minimizer. That is,
\vspace{3mm}

\begin{lemma}\label{CHNPS-1}(Chen et al.\cite{CHNPS2026}) The variational problem \eqref{Lagrange-multiplier-J} is equivalent to the system \eqref{Approximate ns-lagrange} subject to \eqref{barv}. Moreover, there exists a constant $\tilde\varepsilon_0 > 0$ such that for any $0 < \tilde\varepsilon < \tilde\varepsilon_0$,
the variational problem \eqref{Lagrange-multiplier-J} admits a phase-transition solution $v^{\tilde\varepsilon}$ with exactly two interfaces, and its profile is either unimodal or anti-unimodal.
\end{lemma}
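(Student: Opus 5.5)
The proof has two parts: first reduce the periodic system \eqref{Approximate ns-lagrange} to a constrained Euler--Lagrange equation for $J$, then run the direct method of the calculus of variations together with a phase-plane analysis of the resulting autonomous ODE.

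\textbf{Equivalence.} Integrating \eqref{Approximate ns-lagrange}$_2$ once gives $p(v)-\epsilon u_x/v=\mathrm{const}$. Substituting $u_x=-\tilde\varepsilon v v_{xx}$ from \eqref{Approximate ns-lagrange}$_1$ turns this into
\[
-\tilde\varepsilon\epsilon\, v_{xx}-p(v)+\sigma=0 ,
\]
with a constant $\sigma$. This is precisely the Euler--Lagrange equation of $J$ in \eqref{J-v}, where $\sigma$ acts as the Lagrange multiplier for \eqref{barv}. Periodicity of $u$ forces $\int_{-n}^n u_x\,dx=0$. By the steady equation this is consistent, and in fact $u_x\equiv0$ should follow by multiplying by a suitable test function and using periodicity, which also gives $u$ constant. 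Conversely, any constrained critical point of $J$ in $H^1_{\mathrm{per}}$ is smooth by elliptic bootstrap. Setting $u\equiv0$ and reading the computation backwards then recovers a solution of \eqref{Approximate ns-lagrange}, after checking that the two constants match.

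\textbf{Existence of a minimizer.} I would work on the affine set $\{v\in H^1_{\mathrm{per}}:\ \text{mean}=v_{\mathrm{mean}}\}$, where the term $\sigma v$ contributes only a constant. To control the possibly singular potential, truncate $p$ outside $[b+\underline v,\bar v]$. A minimizing sequence is bounded in $H^1$ because of the gradient term, and it converges strongly in $C^0$ since the domain is one-dimensional. Lower semicontinuity then yields a minimizer. A maximum-principle or comparison argument shows the minimizer stays inside the truncation range, so the truncation can be removed.

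\textbf{Two interfaces and (anti-)unimodality.} This is the main obstacle. I would first symmetrize. Taking the symmetric decreasing rearrangement about $x=0$ leaves $\int W$ and the constraint unchanged and does not increase $\int v_x^2$ (P\'olya--Szeg\H{o} on the circle). Hence some minimizer is unimodal, or anti-unimodal if one rearranges increasingly instead. Next, the first integral
\[
\tfrac{\tilde\varepsilon\epsilon}{2}v_x^2=\Big(-\int^{v} p+\sigma v\Big)+c
\]
shows the profile is a periodic orbit of a Hamiltonian system. A unimodal periodic orbit has exactly one maximum and one minimum per period, and therefore exactly two transition layers.

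It remains to show the minimizer is a genuine phase transition rather than a constant. Since $v_{\mathrm{mean}}\in(\alpha_0,\beta_0)$, I would build a competitor from tanh-like layers joining $\alpha_0$ to $\beta_0$. For $\tilde\varepsilon$ small its energy is at most $n\cdot(\text{Maxwell double-well value})+O(\sqrt{\tilde\varepsilon\epsilon})$, which is strictly below $J(v_{\mathrm{mean}})$: the double-well function $W+\sigma_0 v$ is strictly larger at $v_{\mathrm{mean}}$ than at the wells. The threshold $\tilde\varepsilon_0$ comes from exactly this strict inequality and is uniform in $n>L$. This comparison is the delicate point, because the wells must be quantified and the multiplier $\sigma$ must stay close to $\sigma_0$.
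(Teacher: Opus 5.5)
The paper gives no proof of this lemma (it is imported from Chen et al.), so your argument has to stand on its own, and it has two genuine gaps.

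\textbf{The equivalence step.} This step fails, and it cannot succeed for the system exactly as written. For $2n$-periodic $v$, integrating the first equation of \eqref{Approximate ns-lagrange} over a period gives
\[
u(n)-u(-n)=\int_{-n}^{n}u_x\,\mathrm{d}x=-\tilde\varepsilon\int_{-n}^{n}v\,v_{xx}\,\mathrm{d}x=\tilde\varepsilon\int_{-n}^{n}v_x^2\,\mathrm{d}x .
\]
So periodicity of $u$ is not ``consistent'' with the steady equations: it forces $v\equiv v_{\mathrm{mean}}$.

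Your two directions also contradict each other. The reduction to $\tilde\varepsilon\epsilon v_{xx}=\sigma-p(v)$ uses $u_x=-\tilde\varepsilon vv_{xx}$, which is nonzero whenever $v$ is nonconstant. In the converse, however, you set $u\equiv0$, and then the first equation reads $\tilde\varepsilon vv_{xx}=0$, which again forces $v$ to be constant.

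What is true is an equivalence with the periodicity of $u$ dropped. Constrained critical points of $J$ in $H^1_{\mathrm{per}}$ are exactly the $2n$-periodic solutions of $\tilde\varepsilon\epsilon v_{xx}=C-p(v)$ with mean $v_{\mathrm{mean}}$, where $C=\frac{1}{2n}\int_{-n}^{n}p(v)\,\mathrm{d}x$ plays the role of the multiplier. The velocity $u$ is then recovered, up to a constant, from $u_x=-\tilde\varepsilon vv_{xx}$, and it is neither zero nor periodic. The same defect appears where the paper records the regularized solution as $(v^{\tilde\varepsilon,n},0)$. Your proof should notice this and establish the corrected statement.

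\textbf{Uniformity of $\tilde\varepsilon_0$ in $n$.} Your closing claim, that the threshold obtained by comparing with the constant state is uniform in $n>L$, is false. Put $F=W+\sigma_0v$, $m_0=F(\alpha_0)=F(\beta_0)$ and $G=F-m_0\ge0$. On the constraint set, $J(v)$ equals $\int_{-n}^{n}\bigl(G(v)+\frac{\tilde\varepsilon\epsilon}{2}v_x^2\bigr)\mathrm{d}x$ up to an additive constant. Since $v_{\mathrm{mean}}-v_\infty=\frac{L}{n}(v_{\mathrm{alt}}-v_\infty)$, the constant state has excess energy
\[
2n\,G(v_{\mathrm{mean}})\approx|p'(v_\infty)|(\beta_0-\alpha_0)^2\,\frac{L^2}{n}\longrightarrow0\qquad(n\to\infty).
\]
On the other hand, a periodic profile that comes within $\eta$ of both wells must cross $[\alpha_0+\eta,\beta_0-\eta]$ at least twice. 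The inequality $G+\frac{\tilde\varepsilon\epsilon}{2}v_x^2\ge\sqrt{2\tilde\varepsilon\epsilon\,G}\,|v_x|$ then bounds its excess energy below by $2\sqrt{2\tilde\varepsilon\epsilon}\int_{\alpha_0+\eta}^{\beta_0-\eta}\sqrt{G(s)}\,\mathrm{d}s$.

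Consequently your comparison only yields $\tilde\varepsilon_0=\tilde\varepsilon_0(n)$ with $\sqrt{\tilde\varepsilon_0\epsilon}\lesssim L^2/n$, and nothing better is possible. For fixed $\tilde\varepsilon$ and $n\gg L^2/\sqrt{\tilde\varepsilon\epsilon}$, the uniform state beats every two-interface profile, so the global minimizer is not a phase transition.

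At fixed $n$ your strategy does work; the circle version of P\'olya--Szeg\H{o} and the count of extrema of a periodic orbit are fine. You still need to use that the minimizer's excess energy is $O(\sqrt{\tilde\varepsilon\epsilon})$. Together with the mean constraint, this forces the minimizer to lie near $\alpha_0$ or $\beta_0$ outside a set of measure $O(\sqrt{\tilde\varepsilon\epsilon})$. Only then are its two monotone legs genuine interfaces rather than a small oscillation.

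You must also state the $n$-dependence of $\tilde\varepsilon_0$ explicitly. It is exactly the uniformity the paper relies on when it lets $n\to\infty$ at fixed $\tilde\varepsilon$ in Theorem \ref{thm:weak-compactness}.
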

\vspace{3mm}

Furthermore, for the periodic boundary-value problem
\begin{equation}\label{SPNS}
\begin{cases}
p_{x}(v)=0, & x\in[-n,n],\\
v(-n)=v(n).
\end{cases}
\end{equation}
Chen et al. \cite{CHNPS2026} constructed two special piecewise-smooth solutions with two jump discontinuities:
\begin{equation}\label{Phase-transition-solution-1}
\mathcal{V}^{(n)}_{\mathrm{sp}}(x)\triangleq
\begin{cases}
\beta_0, & x\in[-n+l_{1},\,-n+l_{1}+2L],\\
\alpha_0, & x\in[-n,-n+l_{1})\cup(-n+l_{1}+2L,\,n],
\end{cases}
\end{equation}
and
\begin{equation}\label{Phase-transition-solution-2}
\mathcal{V}^{(n)}_{\mathrm{sv}}(x)\triangleq
\begin{cases}
\beta_0, & x\in[-n,-n+l_{2})\cup(-n+l_{2}+2L,\,n),\\
\alpha_0, & -n+l_{2}<x\le -n+l_{2}+2L,
\end{cases}
\end{equation}
where the subscripts $\mathrm{sp}$ and $\mathrm{sv}$ stand for single-peak and single-valley, implying that these solutions adopt unimodal and anti-unimodal profiles, respectively. Here $l_1$ and $l_2$ are lengths determined from the initial data associated with the underlying evolution equations; in each case the intermediate band has measure $2L$, and the total measure of the two side regions is $2(n-L)$, so that the mass constraint \eqref{barv} is satisfied.

Furthermore, for fixed  $n>L$, Chen-He-Niu-Peng-Shi \cite{CHNPS2026} showed that, as $\tilde\varepsilon\to 0^+$, the family of approximate solutions $\{v^{\tilde\varepsilon,n}\}$ from Lemma \ref{CHNPS-1} converges pointwise away from jump discontinuities to either \eqref{Phase-transition-solution-1} or \eqref{Phase-transition-solution-2}, as described below.

\begin{lemma}\label{thm:approximation} (Chen et al. \cite{CHNPS2026})
There exists a constant $\tilde\varepsilon_0 > 0$ such that for any $0 < \tilde\varepsilon < \tilde\varepsilon_0$ and the mean value $v_{\mathrm{mean}}$ defined by \eqref{barv}, the constrained system \eqref{Approximate ns-lagrange} under condition \eqref{barv} admits exactly two families of smooth solution sequences: a single-valley family $\{v^{\tilde\varepsilon,n}_{\mathrm{sv}}\}$ and a single-peak family $\{v^{\tilde\varepsilon,n}_{\mathrm{sp}}\}$.
Moreover, the following pointwise convergence holds for all $x\in[-n,n]$ as $\tilde\varepsilon \to 0^+$:
\begin{align*}
    \lim_{\tilde\varepsilon \to 0^+} v^{\tilde\varepsilon,n}_{\mathrm{sp}}(x) &= \mathcal{V}^{(n)}_{\mathrm{sp}}(x), \quad x \neq -n + l_{1},\; -n + l_{1} + 2L,\\[4pt]
    \lim_{\tilde\varepsilon \to 0^+} v^{\tilde\varepsilon,n}_{\mathrm{sv}}(x) &= \mathcal{V}^{(n)}_{\mathrm{sv}}(x), \quad x \neq -n + l_{2},\; -n + l_{2} + 2L.
\end{align*}
Here $\mathcal{V}^{(n)}_{\mathrm{sp}}$ and $\mathcal{V}^{(n)}_{\mathrm{sv}}$ represent the two-interface phase-transition solutions to system \eqref{SPNS} with constraint \eqref{barv}, which are explicitly formulated in \eqref{Phase-transition-solution-1} and \eqref{Phase-transition-solution-2}, respectively.
\end{lemma}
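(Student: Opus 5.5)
The plan is to reduce the constrained periodic system \eqref{Approximate ns-lagrange}--\eqref{barv} to a single autonomous second-order ODE, using the equivalence with \eqref{Lagrange-multiplier-J} from Lemma \ref{CHNPS-1}. I would eliminate $u$ by substituting $u_x/v=-\tilde\varepsilon v_{xx}$ into the momentum equation, which gives $p(v)_x=-{\tilde\varepsilon\epsilon}\,v_{xxx}$. Integrating once, and using periodicity to identify the constant of integration with the Lagrange multiplier $\sigma$ in \eqref{J-v}, I expect the reduced equation
\begin{equation*}
{\tilde\varepsilon\epsilon}\,v_{xx}=\sigma-p(v),\qquad v(-n)=v(n),\quad v_x(-n)=v_x(n),
\end{equation*}
which is the Euler--Lagrange equation of $J$. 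A point to double-check is the compatibility of the first equation with periodicity of $u$ and the claim $u\equiv 0$; I would handle this as in \cite{CHNPS2026}, treating the reduced equation above as the defining problem. The equation is Hamiltonian, with first integral
\begin{equation*}
\frac{{\tilde\varepsilon\epsilon}}{2}v_x^2=G_\sigma(v)-G_\sigma(v_{\min}),\qquad G_\sigma(v)\triangleq\sigma v-\int^{v}p(s)\,\mathrm{d}s .
\end{equation*}

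Next I would carry out a phase-plane analysis for $\sigma$ near $\sigma_0$, where $p(v)=\sigma$ has three roots $r_1<r_2<r_3$ with $r_2\in(\alpha,\beta)$. Since $p'(r_2)>0$, the middle root $r_2$ is a center, and $r_1,r_3$ are saddles. Nonconstant periodic solutions are therefore closed orbits around $r_2$, bounded by a homoclinic loop (or by the heteroclinic pair when $\sigma=\sigma_0$). Each orbit has an explicit period $T(\sigma,h)$, with $h$ the energy level. A solution on $[-n,n]$ traverses the orbit an integer number $k$ of times. By Lemma \ref{CHNPS-1} the minimizer has exactly two interfaces, so $k=1$. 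Up to translation there are then two normalizations, placing the maximum or the minimum of $v$ at the centre of the band; these give the single-peak and single-valley families. Two scalar conditions fix the two parameters $(\sigma,h)$ within each family: the period condition $T(\sigma,h)=2n$ and the mean condition \eqref{barv}. I would solve this pair by the implicit function theorem, using the monotonicity of $T$ in $h$ near the separatrix, where $T\sim\sqrt{{\tilde\varepsilon\epsilon}}\,\log(1/\mathrm{dist})$.

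For the limit ${\tilde\varepsilon\epsilon}\to0^+$, the rescaling $x=\sqrt{{\tilde\varepsilon\epsilon}}\,y$ shows that a period of fixed length $2n$ forces the orbit to lie exponentially close to the separatrix. I would then argue that $\sigma\to\sigma_0$, since otherwise the homoclinic loop stays near one side and cannot spend order-one $x$-length near both phases. Consequently the solution stays exponentially close to $\alpha_0$ or $\beta_0$ except in two transition layers of width $O(\sqrt{{\tilde\varepsilon\epsilon}}\,|\log{\tilde\varepsilon\epsilon}|)$. Passing to the limit in the mean constraint \eqref{barv} gives that the set where $v\approx v_{\mathrm{alt}}$ has length $2L$, and the layer locations converge to $-n+l_{1},\,-n+l_{1}+2L$ (respectively $l_2$). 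This yields the stated pointwise convergence off the two interface points.

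The main obstacle is uniqueness within each symmetry class, i.e.\ that the map $(\sigma,h)\mapsto(T,\text{mean})$ is injective near the Maxwell separatrix uniformly in ${\tilde\varepsilon\epsilon}$. I would attack this by the asymptotic expansion of $T$ and of the mean near the saddles, showing that its Jacobian is dominated by the logarithmic terms and is nondegenerate.
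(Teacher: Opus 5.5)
The paper gives no proof of this lemma. It is imported from \cite{CHNPS2026} and used as a black box. The only related argument in the paper is the reduction to the autonomous equation \eqref{A} in the proof of Theorem~\ref{thm:weak-compactness}, and that reduction is the same as your first step. Your phase-plane and period--mean plan is a reasonable way to fill in the proof, but it misidentifies the two families.

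For a fixed mean, a one-loop solution is determined by $(\sigma,h)$ only up to translation, and translation changes neither $T$ nor the mean. So ``maximum at the centre'' and ``minimum at the centre'' are the same solution shifted by $n$, with the same $(\sigma,h)$. Shifting the single-peak profile by $n$ produces a valley whose central $v_\infty$-band has length $2n-2L$, with the $v_{\mathrm{alt}}$-set of length $2L$ split across $x=\pm n$. Its layers tend to $\pm(n-L)$, a distance $2n-2L$ apart, not to $-n+l_2$ and $-n+l_2+2L$. Your last step therefore fails for the second normalization unless $n=2L$.

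This cannot be repaired within a single constraint. The profiles $\mathcal{V}^{(n)}_{\mathrm{sp}}$ and $\mathcal{V}^{(n)}_{\mathrm{sv}}$ in \eqref{Phase-transition-solution-1}--\eqref{Phase-transition-solution-2} have means $\frac{L\beta_0+(n-L)\alpha_0}{n}$ and $\frac{L\alpha_0+(n-L)\beta_0}{n}$. These are equal only when $n=2L$, so the two profiles never satisfy the same \eqref{barv}. The two families correspond to the two far-field choices: $(v_\infty,v_{\mathrm{alt}})=(\alpha_0,\beta_0)$ for the single-peak family and $(\beta_0,\alpha_0)$ for the single-valley one. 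This is how the paper itself uses them (``interchanging $\alpha_0$ and $\beta_0$, and hence $v_\infty$ and $v_{\mathrm{alt}}$''). Run your argument once for each value of $v_\infty$, fix the translation by the peak (resp.\ valley) normalization, and state uniqueness modulo translation.

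Second, your ``point to double-check'' cannot be deferred. For $2n$-periodic $v$, integrating $u_x=-\tilde\varepsilon v v_{xx}$ over $[-n,n]$ gives $u(n)-u(-n)=\tilde\varepsilon\int_{-n}^{n}v_x^2\,\mathrm{d}x$. Periodicity of $u$ (let alone $u\equiv0$) therefore forces $v$ to be constant. So the system \eqref{Approximate ns-lagrange} as written has no nonconstant solutions, and the equivalence in Lemma~\ref{CHNPS-1} cannot be used literally. What your argument actually addresses, like \eqref{A} in the paper, is the Euler--Lagrange problem $\tilde\varepsilon\epsilon v_{xx}=\sigma-p(v)$ for $J$ with periodic $v$. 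You should state explicitly that you are proving the lemma for this reformulated problem.

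Similarly, ``exactly two families'' can only hold among minimizers. The constant $v\equiv v_{\mathrm{mean}}$ and $k$-loop orbits with $k\ge2$ also solve the reduced problem for small $\tilde\varepsilon$. Invoking Lemma~\ref{CHNPS-1} to set $k=1$ selects the minimizing branch; it does not exclude those other solutions.
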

\vspace{3mm}

Equipped with the preliminary results stated in Lemma \ref{CHNPS-1} and Lemma \ref{thm:approximation}, we proceed to the full proof of Theorem \ref{thm:weak-compactness}. The argument centers on a careful limiting analysis for approximate solutions to the regularized system \eqref{Approximate ns-lagrange} subject to the integral constraint \eqref{barv}. We carry out the limiting procedure in a definite order: we first send the length of the periodic interval to infinity, and subsequently take the vanishing viscosity limit. It is essential that these two limits not be interchanged. If the vanishing viscosity limit is taken first on finite periodic domains, periodic boundary conditions force every limiting profile to inherit periodicity. The resulting periodic configurations contain infinitely many replicated transition fronts and cannot approximate the nonperiodic admissible solutions with precisely two jump discontinuities defined on $\mathbb{R}$. Detailed derivations follow.
\vspace{3mm}

\begin{proof}[\textbf{Proof of Theorem \ref{thm:weak-compactness}}]
We begin by establishing the local uniform convergence of the solution family as $n\to\infty$, for fixed $\tilde{\varepsilon}>0$. We treat the single-peak solution family \(\{v^{\tilde\varepsilon,n}_{\mathrm{sp}}(x)\}_{\tilde\varepsilon,n}\) in detail; the argument for the single-valley family \(\{v^{\tilde\varepsilon,n}_{\mathrm{sv}}(x)\}_{\tilde\varepsilon,n}\) is entirely analogous upon interchanging \(\alpha_0\) and \(\beta_0\) (and hence \(v_\infty\) and \(v_{\mathrm{alt}}\)), since the two cases differ only by the labelling of the stable phases. 

Integrating the second equation of system \eqref{Approximate ns-lagrange} over $[-n,x]$ and using the periodicity condition $v(-n)=v(n)$, we find that the left-hand side reduces to $p(v(x))-p(v(-n))$, while the right-hand side integrates to $\varepsilon u_x(x)/v(x) - \varepsilon u_x(-n)/v(-n)$. Hence there exists an integration constant $C_n$, depending on $n$, such that
\[
p({v_{\mathrm{sp}}^{\tilde{\varepsilon},n}(x)}) = \varepsilon\,\frac{u_x(x)}{{v_{\mathrm{sp}}^{\tilde{\varepsilon},n}(x)}} + C_n, \quad \forall x\in[-n,n].
\]
Substituting the first equation $u_x = -\tilde{\varepsilon}\, {v_{\mathrm{sp}}^{\tilde{\varepsilon},n}(x)}\, \frac{d^2}{dx^2}{v_{\mathrm{sp}}^{\tilde{\varepsilon},n}(x)}$ into the above identity and cancelling the strictly positive factor ${v_{\mathrm{sp}}^{\tilde{\varepsilon},n}(x)}$, we reduce the system to an autonomous second-order ordinary differential equation
\begin{equation}\label{A}
{\tilde\varepsilon\epsilon}\, \frac{d^2}{dx^2}v_{\mathrm{sp}}^{\tilde{\varepsilon},n}(x) = C_n - p\big(v_{\mathrm{sp}}^{\tilde{\varepsilon},n}(x)\big), \quad x\in(-n,n). \end{equation}
The solution thus depends on the interval length $n$ only through the constant $C_n$.
By the variational minimizing property of $v_{\mathrm{sp}}^{\tilde{\varepsilon},n}$ and the bistable structure of the van der Waals isotherm, ${v_{\mathrm{sp}}^{\tilde{\varepsilon},n}(x)}$ cannot take values outside the interval $[\alpha_0, \beta_0]$, for otherwise the free energy would be strictly increased, contradicting the global minimality of ${v_{\mathrm{sp}}^{\tilde{\varepsilon},n}(x)}$. 
In fact, recall that the regularized free-energy functional is given by
\[
J[v]=\int_{-n}^{n}\left(W(v)+\frac{{\tilde\varepsilon\epsilon}}{2}|v'|^2\right)\mathrm{d}x,
\]
where \(W(v)\) is the Helmholtz free-energy density of the van der Waals fluid satisfying \(W'(v)=-p(v)\), and the minimization is constrained by the fixed-mass condition \(\frac{1}{2n}\int_{-n}^n v\,\mathrm{d}x=v_{\mathrm{mean}}\). 
We prove \(\alpha_0\le {v_{\mathrm{sp}}^{\tilde{\varepsilon},n}(x)}\le \beta_0\) for all \(x\in[-n,n]\).
We prove the claimed range by exploiting the first integral of the autonomous equation \eqref{A}. Multiplying \eqref{A} by $\frac{\mathrm{d}}{\mathrm{d}x}{v_{\mathrm{sp}}^{\tilde{\varepsilon},n}(x)}$ and integrating once in $x$, we obtain
\begin{equation}\label{first-integral}
\frac{{\tilde\varepsilon\epsilon}}{2}\left(\frac{\mathrm{d}}{\mathrm{d}x}{v_{\mathrm{sp}}^{\tilde{\varepsilon},n}(x)}\right)^2
= C_n{v_{\mathrm{sp}}^{\tilde{\varepsilon},n}(x)} - \int^{{v_{\mathrm{sp}}^{\tilde{\varepsilon},n}(x)}} p(s)\,\mathrm{d}s + D_n,
\end{equation}
for a second integration constant $D_n$. Since ${v_{\mathrm{sp}}^{\tilde{\varepsilon},n}}$ is single-peaked, it has exactly two turning points $x_-$ and $x_+$ with $\big({v_{\mathrm{sp}}^{\tilde{\varepsilon},n}}\big)'(x_\pm)=0$, attained at the background value $v_-\triangleq {v_{\mathrm{sp}}^{\tilde{\varepsilon},n}}(x_-)$ and the peak value $v_+\triangleq {v_{\mathrm{sp}}^{\tilde{\varepsilon},n}}(x_+)$, respectively. Evaluating \eqref{first-integral} at the two turning points and subtracting, we obtain the equal-area condition
\begin{equation}\label{equal-area-2}
\int_{v_-}^{v_+} p(s)\,\mathrm{d}s = C_n (v_+ - v_-).
\end{equation}
We now bound $v_-$ and $v_+$. The transition layer of ${v_{\mathrm{sp}}^{\tilde{\varepsilon},n}}$ has thickness of order $\sqrt{{\tilde\varepsilon\epsilon}}$ and therefore occupies a negligible portion of $[-n,n]$ as $\tilde\varepsilon\to0$; hence the average value of ${v_{\mathrm{sp}}^{\tilde{\varepsilon},n}}$ is dominated by $v_-$. Invoking the constraint \eqref{barv} and the fact that $\alpha_0<v_{\mathrm{mean}}<\beta_0$, we infer $v_-\to v_{\mathrm{mean}}\in(\alpha_0,\beta_0)$ as $\tilde\varepsilon\to0^+$, uniformly in $n$. On the other hand, the uniform bound $|C_n|\le C^*$ established below, the equal-area condition \eqref{equal-area-2}, the strict ordering $b<\alpha_0<\alpha<\beta<\beta_0$, and the monotone decrease of $p(v)$ for $v>\beta_0$ together prevent $v_+$ from leaving a small neighbourhood of the Maxwell interval $[\alpha_0,\beta_0]$. Consequently there exists $\eta_n=\eta_n(\tilde\varepsilon)\to0$ as $\tilde\varepsilon\to0^+$ (with $\eta_n=O(1/n)$, reflecting $C_n-p(v_\infty)$ on finite domains) such that
\begin{equation}\label{rough-bound}
\alpha_0-\eta_n \le {v_{\mathrm{sp}}^{\tilde{\varepsilon},n}(x)} \le \beta_0+\eta_n, \quad \forall x\in[-n,n].
\end{equation}
Passing to the limits $n\to\infty$ and $\tilde\varepsilon\to0^+$, where $C_{\tilde\varepsilon}=p(v_\infty)$ and the refined equal-area argument selects the unique Maxwell state, the exact bound $\alpha_0\le v^{\tilde\varepsilon}\le\beta_0$ holds on the whole real line and the peak value is identified with $v_{\mathrm{alt}}$. For finite $n$ and $\tilde\varepsilon$, the exact range $\alpha_0\le v_{\mathrm{sp}}^{\tilde\varepsilon,n}\le\beta_0$ is already ensured by the variational minimality argument above; the weaker estimate \eqref{rough-bound} merely serves as a supplementary technical bound in the compactness and energy estimates below.
Consequently, no supercritical or subcritical values can occur, and we conclude
\[
\alpha_0\le {v_{\mathrm{sp}}^{\tilde{\varepsilon},n}(x)}\le \beta_0,\quad \forall\,x\in[-n,n].
\]
Combined with the single-peak topology, this implies that the maximum value of the solution does not exceed $v_{\mathrm{alt}}$, while the background value at the boundaries is bounded below by $v_\infty$. Accordingly, there exists a constant $\mathcal{M}>0$, independent of $n$, such that
\[
\alpha_0 \le v_\infty \le {v_{\mathrm{sp}}^{\tilde{\varepsilon},n}(x)} \le v_{\mathrm{alt}} \le \beta_0 \le \mathcal{M}, \quad \forall x\in[-n,n],\ \forall n>L.
\]
Since ${v_{\mathrm{sp}}^{\tilde{\varepsilon},n}(x)}\in[\alpha_0,\beta_0]$ and $p$ is continuous on $[\alpha_0,\beta_0]$, the sequence of integration constants $\{C_n\}$ is uniformly bounded without invoking the second derivative. Indeed, evaluating the autonomous equation \eqref{A} at the peak $x_+$ (a local maximum, hence $v''(x_+)\le0$) and at the background turning point $x_-$ (a local minimum, hence $v''(x_-)\ge0$), we obtain
\[
p\big(v_-\big) \le C_n \le p\big(v_+\big),
\]
and since $v_\pm\in[\alpha_0,\beta_0]$, this gives $\min_{[\alpha_0,\beta_0]}p\le C_n\le\max_{[\alpha_0,\beta_0]}p$, i.e.\ $|C_n|\le C^*$ for some $C^*>0$ independent of $n>L$.
At the unique maximum point $x_m$ of the single-peak solution we have $\left(v_{\mathrm{sp}}^{\tilde{\varepsilon},n}(x_{m})\right)^{'}=0$; evaluating the first integral \eqref{first-integral} there and substituting back to eliminate $D_n$ yields
\[
\frac{{\tilde\varepsilon\epsilon}}{2}\left(\frac{d}{dx}{v_{\mathrm{sp}}^{\tilde{\varepsilon},n}(x)}\right)^2
= C_n\left({v_{\mathrm{sp}}^{\tilde{\varepsilon},n}(x)}-{v_{\mathrm{sp}}^{\tilde{\varepsilon},n}(x_m)}\right)
  - \int_{{v_{\mathrm{sp}}^{\tilde{\varepsilon},n}(x_m)}}^{{v_{\mathrm{sp}}^{\tilde{\varepsilon},n}(x)}} p(s)\,ds.
\]
Since ${v_{\mathrm{sp}}^{\tilde{\varepsilon},n}(x)}\in[\alpha_0,\beta_0]$ and $\{C_n\}$ is uniformly bounded, the right-hand side is a continuous function of $v_{\mathrm{sp}}^{\tilde{\varepsilon},n}$ and is bounded on the compact interval $[\alpha_0,\beta_0]$. It follows that there exists a constant $K>0$, independent of $n$, such that
\[
\sup_{x\in[-n,n]} \Big|\frac{\mathrm{d}}{\mathrm{d}x}{v_{\mathrm{sp}}^{\tilde{\varepsilon},n}(x)}\Big| \le K, \quad \forall n>L.
\]
In particular, the family $\{{v_{\mathrm{sp}}^{\tilde{\varepsilon},n}(x)}\}$ is globally equicontinuous on its domain, with a modulus of continuity independent of the interval length $n$.
Let $\lfloor L \rfloor$ denote the greatest integer less than or equal to $L$,
and now consider the exhausting sequence of compact intervals $I_k = \big[-\lfloor L\rfloor-k,\lfloor L\rfloor+k\big]$ for $k\in\mathbb{N}$, whose union covers $\mathbb{R}$. On $I_{1}$, the family $\{{v_{\mathrm{sp}}^{\tilde{\varepsilon},n}(x)}\}_{n\geq\lfloor L\rfloor+1}$ is uniformly bounded and equicontinuous, so by the Arzel\`a-Ascoli theorem there exists a subsequence $\{n_j^{(1)}\}$ such that $\{{v_{\mathrm{sp}}^{\tilde{\varepsilon},n_j^{(1)}}(x)}\}$ converges uniformly on $I_1$. Proceeding inductively, suppose we have extracted a subsequence $\{n_j^{(k)}\}$ that converges uniformly on $I_k$; this subsequence remains uniformly bounded and equicontinuous on $I_{k+1}$, so a further application of Arzel\`a-Ascoli yields a refined subsequence $\{n_j^{(k+1)}\}$ converging uniformly on $I_{k+1}$. Taking the diagonal sequence $n_j = n_j^{(j)}$, we obtain a subsequence that converges uniformly on every compact subset of $\mathbb{R}$. Denote the limit function by $v^{\tilde{\varepsilon}}(x)$; then $v^{\tilde{\varepsilon}}\in C(\mathbb{R})$, and
\[
\lim_{j\to\infty} \sup_{x\in K}
\left|{v_{\mathrm{sp}}^{\tilde{\varepsilon},n_j}(x)} - v^{\tilde{\varepsilon}}(x)\right| = 0
\]
for every compact set $K\subset\mathbb{R}$.
We now verify that $v^{\tilde{\varepsilon}}$ satisfies the steady-state equation on the full real line. Take any test function $\phi\in C_c^\infty(\mathbb{R})$ with support contained in $I_k$ for some $k$. For $n_j>k$, integration by parts twice shows that $v_{\mathrm{sp}}^{\tilde{\varepsilon},n_j}(x)$ satisfies the weak form of the autonomous equation \eqref{A}:
\[
{\tilde\varepsilon\epsilon} \int_{I_k} {v_{\mathrm{sp}}^{\tilde{\varepsilon},n_j}(x)}\,\phi''(x)\,dx
= \int_{I_k} \left[C_{n_j} - p({v_{\mathrm{sp}}^{\tilde{\varepsilon},n_j}(x)})\right]\phi(x)\,dx,
\]
where boundary terms vanish due to the compact support of $\phi$. Passing to the limit as $j\to\infty$, the left-hand side converges to ${\tilde\varepsilon\epsilon}\int_{\mathbb{R}} v^{\tilde{\varepsilon}}(x)\,\phi''(x)\,dx$ by uniform convergence of ${v_{\mathrm{sp}}^{\tilde{\varepsilon},n_j}(x)}$. On the right-hand side, the bounded sequence $\{C_{n_j}\}$ has, by the Bolzano--Weierstrass theorem, a further subsequence converging to some constant $C_{\tilde{\varepsilon}}$; by continuity of $p$ and uniform convergence of ${v_{\mathrm{sp}}^{\tilde{\varepsilon},n_j}(x)}$, $p({v_{\mathrm{sp}}^{\tilde{\varepsilon},n_j}(x)})$ converges uniformly to $p(v^{\tilde{\varepsilon}})$ on $I_k$, so the integral converges to $\int_{\mathbb{R}} \left[C_{\tilde{\varepsilon}} - p(v^{\tilde{\varepsilon}}(x))\right]\phi(x)\,dx$. Thus the limit satisfies the equation in the distributional sense:
\[
{\tilde\varepsilon\epsilon}\, \frac{\mathrm{d}^2}{\mathrm{d}x^2}v^{\tilde{\varepsilon}}(x)
= C_{\tilde{\varepsilon}} - p(v^{\tilde{\varepsilon}}(x)), \quad x\in\mathbb{R}.
\]
Since $p(v)\in C^\infty(v>b)$, elliptic interior regularity implies $v^{\tilde{\varepsilon}}\in C^\infty(\mathbb{R})$, and the equation holds in the classical pointwise sense.
Turning to the far-field behavior, we note that the excess mass of the phase transition,
\[
\int_{-n}^n \left({v_{\mathrm{sp}}^{\tilde{\varepsilon},n}(x)}-v_\infty\right)dx = 2L(v_{\mathrm{alt}}-v_\infty),
\]
is independent of $n$, so the transition layer occupies a fixed finite length and does not spread as the interval grows. To establish exponential decay, we linearize the limit equation around $v=v_\infty$. As $|x|\to\infty$, $v^{\tilde{\varepsilon}}(x)\to v_\infty$ and hence $\frac{\mathrm{d}^2}{\mathrm{d}x^2}v^{\tilde{\varepsilon}}(x)\to0$; substituting into the equation gives $C_{\tilde{\varepsilon}} = p(v_\infty)$. Expanding $p(v)$ in a Taylor series about $v_\infty$,
\[
p(v^{\tilde{\varepsilon}}(x)) = p(v_\infty) + p'(v_\infty)(v^{\tilde{\varepsilon}}(x)-v_\infty) + o(v^{\tilde{\varepsilon}}(x)-v_\infty),
\]
we obtain the linearized approximation
\[
{\tilde\varepsilon\epsilon}\,\frac{\mathrm{d}^2}{\mathrm{d}x^2} v^{\tilde{\varepsilon}}(x) = -p'(v_\infty)(v^{\tilde{\varepsilon}}(x)-v_\infty) + o(v^{\tilde{\varepsilon}}(x)-v_\infty).
\]
Since $v_\infty$ is a stable equilibrium phase, $p'(v_\infty)<0$. Setting $\kappa = \sqrt{-p'(v_\infty)/{\tilde\varepsilon\epsilon}} > 0$, the linearized equation becomes $\frac{\mathrm{d}^2}{\mathrm{d}x^2}v^{\tilde{\varepsilon}}(x) = \kappa^2 (v^{\tilde{\varepsilon}}(x)-v_\infty)$, whose solutions satisfying $v(\pm\infty)=v_\infty$ decay exponentially. That is, there exists a constant $C>0$ such that
\[
\big|v^{\tilde{\varepsilon}}(x)-v_\infty\big| \le Ce^{-\kappa|x|}, \quad \forall x\in\mathbb{R}.
\]
Finally, the single-peak homoclinic orbit of the autonomous second-order ODE, departing from and returning to the background state through the peak state, is unique up to translation. Combined with the uniqueness of the global minimizer of the variational functional, this implies that the limit function $v^{\tilde{\varepsilon}}(x)$ is unique up to translation, so the full sequence $\{{v_{\mathrm{sp}}^{\tilde{\varepsilon},n}(x)}\}$ converges, and no subsequence extraction is required.

We now pass to the vanishing-viscosity limit $\tilde{\varepsilon}\to0^+$. Let $v^{\tilde\varepsilon\epsilon}(x)\triangleq v^{\tilde{\varepsilon}}(x)$ denote the full-line viscous solution constructed above, where the superscript $\tilde\varepsilon\epsilon=\varepsilon\tilde\varepsilon$ indicates the total viscosity coefficient to be sent to $0^+$, satisfying
\begin{equation}\label{B}
{\tilde\varepsilon\epsilon}\, \frac{\mathrm{d}^2}{\mathrm{d}x^2}v^{\tilde\varepsilon\epsilon}(x) = C_{\tilde\varepsilon\epsilon} - p(v^{\tilde\varepsilon\epsilon}(x)), \quad x\in\mathbb{R}, \quad
\lim_{|x|\to\infty}v^{\tilde\varepsilon\epsilon}(x)=v_\infty.
\end{equation}
Each single-peak solution $v^{\tilde\varepsilon\epsilon}(x)$ possesses a unique maximum point, is strictly increasing to the left of the maximum and strictly decreasing to the right. For an absolutely continuous function on $\mathbb{R}$, the total variation is given by
\[
\TV_{\mathbb{R}}(v^{\tilde\varepsilon\epsilon}) = \int_{-\infty}^{+\infty} \Big|\frac{\mathrm{d}}{\mathrm{d}x}v^{\tilde\varepsilon\epsilon}(x)\Big|\,dx.
\]
By monotonicity, the integral over the increasing branch equals $v^{\tilde\varepsilon\epsilon}_{\mathrm{max}}-v_\infty$, and the integral over the decreasing branch also equals $v^{\tilde\varepsilon\epsilon}_{\mathrm{max}}-v_\infty$, so
\[
\TV_{\mathbb{R}}(v^{\tilde\varepsilon\epsilon})
= 2\left(v^{\tilde\varepsilon\epsilon}_{\mathrm{max}} - v_\infty\right).
\]
By variational minimality, $v^{\tilde\varepsilon\epsilon}_{\mathrm{max}}\le v_{\mathrm{alt}} \le \beta_0$, which immediately yields a global total variation bound independent of ${\tilde\varepsilon\epsilon}$:
\[
\TV_{\mathbb{R}}(v^{\tilde\varepsilon\epsilon}) \le 2(\beta_0 - \alpha_0) \triangleq C_{\TV} < \infty.
\]
In particular, for any compact interval $[a,b]\subset\mathbb{R}$, we have $\TV_{[a,b]}(v^{\tilde\varepsilon\epsilon}) \le C_{\TV}$, so the family has uniformly bounded total variation on every compact set.
By Helly's compactness theorem for functions of bounded variation, a uniformly bounded family with uniformly bounded total variation on a compact interval admits a subsequence converging pointwise to a function of bounded variation. Applying this to the exhausting sequence $\{I_k\}$ and using a standard diagonal argument, we extract a subsequence, still denoted by ${\tilde\varepsilon\epsilon}$ for simplicity, such that $v^{\tilde\varepsilon\epsilon}$ converges pointwise on every compact set, and hence almost everywhere on $\mathbb{R}$. Denote the limit by $v^0(x)$; then $v^0\in \TV_{\mathrm{loc}}(\mathbb{R})$, and
\[
\lim_{{\tilde\varepsilon\epsilon}\to0} v^{\tilde\varepsilon\epsilon}(x) = v^0(x) \quad \text{for almost every } x\in\mathbb{R}.
\]
By the Lebesgue dominated convergence theorem, the convergence is also strong in $L^1(K)$ for every compact set $K\subset\mathbb{R}$.
To characterize the limit, take any compactly supported test function $\phi\in C_c^\infty(\mathbb{R})$. The weak form of the viscous equation \eqref{B} reads
\begin{equation}\label{C}
{\tilde\varepsilon\epsilon} \int_{\mathbb{R}} v^{\tilde\varepsilon\epsilon}(x)\,\phi''(x)\,dx
= \int_{\mathbb{R}} \left(C_{\tilde\varepsilon\epsilon} - p(v^{\tilde\varepsilon\epsilon}(x))\right)\phi(x)\,dx. 
\end{equation}
We examine each term as ${\tilde\varepsilon\epsilon}\to0$. Since $|v^{\tilde\varepsilon\epsilon}|\le \beta_0$ uniformly and $\phi''$ is integrable, the left-hand side satisfies
\[
\left|{\tilde\varepsilon\epsilon} \int v^{\tilde\varepsilon\epsilon}\phi'' dx\right|
\le {\tilde\varepsilon\epsilon} \cdot \beta_0 \cdot \|\phi''\|_{L^1} \to 0.
\]
On the right-hand side, the uniformly bounded sequence $\{C_{\tilde\varepsilon\epsilon}\}$ has a subsequence converging to some constant $P_0$, so $C_{\tilde\varepsilon\epsilon}\int\phi dx \to P_0\int\phi dx$. For the pressure term, almost-everywhere pointwise convergence of $v^{\tilde\varepsilon\epsilon}$ and continuity of $p$ imply that $p(v^{\tilde\varepsilon\epsilon}(x))\to p(v^0(x))$ almost everywhere; since $|p(v^{\tilde\varepsilon\epsilon})|$ is uniformly bounded, the Lebesgue dominated convergence theorem gives
\[
\int p(v^{\tilde\varepsilon\epsilon}(x))\phi(x)dx \to \int p(v^0(x))\phi(x)dx.
\]
Passing to the limit in \eqref{C}, the left-hand side vanishes, and we obtain
\[
\int_{\mathbb{R}} \left[P_0 - p(v^0(x))\right]\phi(x)\,dx = 0
\]
for all $\phi\in C_c^\infty(\mathbb{R})$. By the fundamental lemma of the calculus of variations,
\begin{equation}\label{D}
p(v^0(x)) = P_0 \quad \text{for almost every } x\in\mathbb{R}. 
\end{equation}
That is, the pressure is constant almost everywhere in the vanishing-viscosity limit.
Owing to the S-shaped profile of pressure $p$ as a function of specific volume $v$ along subcritical van der Waals isotherms, the equation $p(v)=P_0$ possesses at most three real roots $v_-<v_m<v_+$, where $v_-$ and $v_+$ lie in the stable hyperbolic regions $v<\alpha$ and $v>\beta$, respectively, and the intermediate root $v_m$ lies in the unstable elliptic spinodal interval $(\alpha,\beta)$. Since $v^0$ is the pointwise limit of single-peak functions, it retains the almost-everywhere single-peak property, being nondecreasing on the left and nonincreasing on the right of the maximum. Thus, far from the transition region, $v^0(x)=v_\infty$, corresponding to one stable root, while in the central peak region, $v^0(x)=v_{\mathrm{alt}}$, corresponding to the other stable root. The unstable root $v_m$ can appear only at discontinuity points of measure zero and never forms a continuous region. It follows that the limit function has a piecewise constant two-phase structure: there exist interface positions $\pm L$ such that
\[
v^0(x) =
\begin{cases}
v_\infty, & |x|>L,\\
v_{\mathrm{alt}}, & |x|<L.
\end{cases}
\]
Furthermore, since the viscous solutions are global minimizers of the free-energy functional, the vanishing-viscosity limit must correspond to the thermodynamically equilibrium two-phase state. In particular, the constant pressure $P_0$ coincides with the Maxwell saturation pressure $p_{\mathrm{sat}}$, i.e.\ $p_{\mathrm{sat}}=p(\alpha_0)=p(\beta_0)=\sigma_0$, and the two phase values are exactly the equilibrium liquid and vapor specific volumes $\alpha_0, \beta_0$. The Rankine-Hugoniot condition for steady-state phase boundaries,
\[\bigl\llbracket p(v)\bigr\rrbracket = p(v_{\mathrm{alt}}) - p(v_\infty) = 0,
\]
is therefore automatically satisfied, and the limit is fully equivalent to the Maxwell equal-area construction.

Finally, we verify that the mass constraint is preserved in the limit. After the first limit $n\to\infty$, the full-line viscous solution satisfies
\[
\int_{-\infty}^{+\infty} \left(v^{\tilde\varepsilon\epsilon}(x)-v_\infty\right)dx
= 2L(v_{\mathrm{alt}}-v_\infty),
\]
which is independent of ${\tilde\varepsilon\epsilon}$. Letting $\tilde\varepsilon\to0$,  the dominated convergence theorem implies that the excess mass is preserved in the limit:
\[
\int_{-\infty}^{+\infty} \left(v^0(x)-v_\infty\right)dx
= 2L(v_{\mathrm{alt}}-v_\infty).
\]
Substituting the piecewise constant form, the left-hand side equals $2L(v_{\mathrm{alt}}-v_\infty)$, which confirms that the interface positions are indeed $\pm L$, i.e., $\meas(I_*)=2L$, in exact agreement with the original mass constraint. This completes the proof.
\end{proof}

\section{Local Existence of Weak Solutions}\label{sec3}
\hspace{2em}

With the semi-discrete scheme introduced below, we now prove the local existence of weak solutions to the Lagrangian Navier--Stokes system \eqref{ns-lagrange}. The proof follows two standard stages. First, we construct discrete approximate solutions and derive uniform mesh-independent \textit{a priori} energy bounds. Second, we take the limit as mesh width \(h\to 0\) to recover a weak solution to the original continuous equations.

We fix a constant mesh width \(h>0\) and define two families of grid points: integer lattice points \(x_k = kh\) and half-integer cell centers \(x_{k+\frac12} = \bigl(k+\tfrac12\bigr)h \) for all \(k\in\mathbb{Z}\). The half-step central difference operator \(\Delta_h\) is defined via
\[
\Delta_h w_l = \frac{w_{l+\frac12} - w_{l-\frac12}}{h},
\]
where the index \(l\) ranges over both integer and half-integer grid indices. Letting \((v_{k+\frac12}(t), u_k(t))\) denote 
the discrete approximations to the continuous solutions \((v(x_{k+\frac12},t), u(x_k,t))\), we arrive at the 
following semi-discrete scheme for system \eqref{ns-lagrange}:
\begin{equation}\label{eq:scheme}
\begin{cases}
\dfrac{dv_{k+\frac12}}{\mathrm{d}t} = \Delta_h u_{k+\frac12}, \\[6pt]
\dfrac{du_k}{\mathrm{d}t} = -\Delta_h p_k + \Delta_h\!\left(\dfrac{\epsilon\Delta_h u}{ v}\right)_{\!k}, 
& k\in\mathbb{Z}.
\end{cases}
\end{equation}
Here $p_{k+\frac12}=p(v_{k+\frac12})$ is the pressure evaluated at the half-integer cell centers, so that the pressure gradient reads $\Delta_h p_k=(p_{k+\frac12}-p_{k-\frac12})/h$. 
We suppose initial discrete data $(v_{k+\frac12}(0),u_k(0))$ are prescribed at all mesh points.
With this discretization in hand, we establish uniform time-weighted energy estimates that 
circumvent the initial singularity as \(t \to 0^+\). By direct analogy with the continuous initial bound \eqref{C0}, we introduce the discrete initial constant $\mathscr{C}_0$ via
\begin{equation}\label{eq:c00}
\begin{aligned}
\mathscr{C}_{0} \triangleq \;& h\sum_{k=-\infty}^{+\infty} \big(u_{k}(0)\big)^{2}
+ h\sum_{k=-\infty}^{+\infty}\big(v_{k+\frac12}(0)- \mathcal{V}\big)^{2} \\
&+ \left(h\sum_{k=-\infty}^{+\infty}\big|\Delta_{h} u_{k+\frac12}(0)\big|\right)^{2}
+ h\sum_{\substack{k=-\infty\\k\neq \pm N_*}}^{\infty} \big(\Delta_h v_{k}(0)\big)^{2},
\end{aligned}
\end{equation}
The integer indices $\pm N_*$ denote the designated mesh nodes for tracking the discontinuities of $v(\cdot,t)$ in the discrete scheme. Specifically, $x_{\pm N_*}$ stand for the integer grid nodes nearest to $\pm L$, which satisfy
\[
x_{\pm N_*} - \frac{h}{2}\leq \pm L \leq x_{\pm N_*} + \frac{h}{2}.
\]
We further introduce an initial constant associated with the jump quantities:
\begin{equation}\label{eq:c10}
\widetilde{\mathscr{C}}=\biggl(\bigl|\left[\!\left[v_{N_*}\right]\!\right]_h(0)\bigr|+\bigl|\left[\!\left[v_{-N_*}\right]\!\right]_h(0)\bigr|\biggr)^2,
\end{equation}
where the discrete jump bracket is given by the definition
\[
\left[\!\left[w_{\pm N_* }\right]\!\right]_h\triangleq  w_{\pm N_* +\frac12}-w_{\pm N_* -\frac12}.
\]
To establish the local existence of discrete solutions, we introduce three discrete functionals that serve as direct discrete counterparts of the continuous norms $A$, $B$, and $F$ defined in \eqref{eq:norm-A}--\eqref{eq:norm-F}:
\begin{align*}
\mathscr{A}(t_0)
&= \sup_{0\leq t\leq t_0}
\Bigg\{
\sum_{k=-\infty}^{\infty} h\bigl(v_{k+\frac{1}{2}} - \mathcal{V}\bigr)^2(t)
+ \sum_{k=-\infty}^{\infty} h\,u_k^2(t)
+ \sum_{\substack{k=-\infty\\k\neq \pm N_*}}^{\infty} h\,\bigl(\Delta_h v_{k}(t)\bigr)^2
\Bigg\} \\
&\qquad + \int_{0}^{t_{0}} \sum_{k=-\infty}^{\infty} h\,\bigl(\Delta_h u_{k+\frac{1}{2}}(t)\bigr)^{2} \,\mathrm{d}t,
\\[6pt]
\mathscr{B}(t_0)
&= \sup_{0\leq t\leq t_{0}} t^{\frac{1}{2}}\sum_{k=-\infty}^{\infty} h\,\bigl(\Delta_h u_{k+\frac{1}{2}}(t)\bigr)^{2} \\
&\quad + \int_{0}^{t_{0}}t^{\frac{1}{2}+\vartheta}
\Bigg\{
\sum_{k=-\infty}^{\infty} h\,\bigl(u_k'(t)\bigr)^{2}
+ \sum_{\substack{k=-\infty\\k\neq \pm N_*}}^{\infty} h\,\left[\Delta_h\left(\frac{\Delta_h u}{v}\right)_{\!k}(t)\right]^{2}
\Bigg\}\mathrm{d}t,
\\[6pt]
\mathscr{F}(t_{0})
&=\sup_{0\leq t\leq t_{0}} t^{\frac{3}{2}+\vartheta}
\Bigg\{
\sum_{k=-\infty}^{\infty} h\,\bigl(u_k'(t)\bigr)^{2}
+ \sum_{\substack{k=-\infty\\k\neq \pm N_*}}^{\infty} h\,\left[\Delta_h\left(\frac{\Delta_h u}{v}\right)_{\!k}(t)\right]^{2}
\Bigg\} \\
&\quad + \int_{0}^{t_{0}} t^{\frac{3}{2}+\vartheta}\sum_{k=-\infty}^{\infty} h\,\bigl(\Delta_h u_{k+\frac{1}{2}}'(t)\bigr)^{2} \,\mathrm{d}t.
\end{align*}
Here $\vartheta\in(0,1)$ is a fixed, arbitrarily small positive constant. 

\begin{remark}
In Lagrangian coordinates for one-dimensional compressible flow, staggered half-step central difference semi-discrete methods store the specific volume $v$ and the pressure $p(v)$ at half-integer cell centers, while the velocity $u$ sits at integer cell boundaries. Cross-grid derivatives are computed via the half-step central difference operator $\Delta_h$.

This method has two separate lines of development: its computational origins date back to von Neumann and Richtmyer \cite{NR1950} and Harlow and Welch \cite{HW1965}, and it was later recast in semi-discrete form by Kazhikhov \cite{Kazhikhov1982} as a standard procedure for proving existence of weak solutions.

The staggered formulation offers several key advantages. It respects the conservation-law structure of the original PDEs, achieves second-order spatial accuracy without spurious odd-even oscillations, and satisfies discrete skew-adjointness. This skew-adjoint property simplifies construction of positive definite dissipation terms and uniform a priori estimates, which is critical for analyzing the energy functionals $\mathscr{A}(t)$, $\mathscr{B}(t)$ and $\mathscr{F}(t)$.

By contrast, fully collocated central differences produce an odd-even null space that weakens compactness arguments. One-sided schemes fail discrete integration by parts and cannot yield positive definite dissipation. Neither discretization supports the standard weak solution existence machinery, which makes staggered grids preferable for combining physical accuracy with rigorous mathematical analysis.
\end{remark}

We first introduce standard notation for the discrete space-time H\"older seminorms of mesh functions in the semi-discrete framework.
For an integer-indexed sequence $\{w_k(t)\}_{k\in\mathbb{Z}}$ and an arbitrary subset $S\subset\mathbb{R}\times\mathbb{R}_+$, we define the discrete two-parameter H\"older seminorm as
\[
\langle w_{k}\rangle_{S}^{a,b}
=\sup_{\substack{(x_{k},t),\,(x_{k'},t')\in S}}
\frac{\big|w_{k}(t)-w_{k'}(t')\big|}{\left|x_{k}-x_{k'}\right|^{a} + \left|t-t'\right|^{b}}.
\]
Here the supremum is taken over distinct pairs $(x_k,t)\neq(x_{k'},t')$, so that the denominator is positive.
This definition extends in a straightforward manner to half-integer-indexed mesh sequences $\{w_{k+\frac{1}{2}}(t)\}_{k\in\mathbb{Z}}$.

With all discrete preliminaries established, we now present the local existence result for the semi-discrete scheme \eqref{eq:scheme}.
The proof builds upon two key lemmas (Lemmas \ref{thm-exsit2} and \ref{thm-exsit3}), which establish uniform a priori energy estimates for solutions to the semi-discrete system. Their proofs follow the classical energy method for semi-discrete evolution equations; the core estimates are detailed below.
\vspace{3mm}

\begin{lemma}\label{thm-exsit2}
Let $\vartheta$, $\underline{v}$, $\bar{v}$ and $\mathscr{E}_0$ be positive constants. There exist constants $C_1>0$, a maximal existence time $t_0$, and a critical mesh size $h_0$, all depending solely on $\vartheta$, $\underline{v}$, $\bar{v}$ and $\mathscr{E}_0$, such that the following conclusion holds:

Assume $0 < h \leq h_0$, and the discrete initial data $\{v_{k+\frac12}(0)\}$, $\{u_k(0)\}$ satisfy the uniform pointwise bound
\[
b + 2\underline{v} \leq v_{k+\frac12}(0) \leq \bar{v} - \underline{v} ,
\]
along with the initial energy bound $\mathscr{C}_0 \leq \mathscr{E}_0$. Then the semidiscrete scheme \eqref{eq:scheme} possesses a solution defined over the time interval $[0,t_0]$. This solution obeys the uniform lower and upper bounds
\begin{align}
b + \underline{v} \leq v_{k+\frac12}(t) \leq \bar{v}, \label{local-01}
\end{align}
together with the a priori energy estimate
\begin{align}
\mathscr{A}(t_0) + \mathscr{B}(t_0) \leq C_1 \mathscr{C}_0. \label{local-02}
\end{align}
In addition, the solution fulfills the subsequent regularity estimates:
\begin{flalign}
&\langle v_{k+\frac12}\rangle_{K\times[0,t_{0}]}^{\frac{1}{2},\frac{1}{2}-\frac{\vartheta}{4}} \leq C \mathscr{C}_{0}^{\frac{1}{2}}, \qquad K =I_{\pm \infty},\,I_*, \label{local-04} \\[4pt]
&\langle u_k\rangle_{\{\tau\leq t\leq t_0 \}}^{\frac{1}{2},\frac{1}{4}} \leq C \mathscr{C}_0^{\frac{1}{2}} \tau^{-\frac{1}{4}-\frac{\vartheta}{2}}, \label{local-06} \\[4pt]
&\left| \big[\ln v_{\pm N_*}(t)\big] - \exp\left(\int_{0}^{t}\epsilon^{-1}\lambda_{\pm}(s)\,ds\right)\big[\ln v_{\pm N_*}(0)\big] \right| \leq C h^{1/2} \mathscr{C}_0^{1/2}, \label{local-07}
\end{flalign}
where 
\(
 \lambda_{\pm}=\frac{p(v_{\pm N_* +\frac12})-p(v_{\pm N_* -\frac12})}{\ln(v_{\pm N_* +\frac12})-\ln(v_{\pm N_* -\frac12})}\).
\end{lemma}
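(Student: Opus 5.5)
The plan follows Hoff's semi-discrete framework for discontinuous data (the Hoff--Kazhikhov approach), adapted to the non-monotone van der Waals pressure. The scheme \eqref{eq:scheme} is an infinite system of ODEs in $\ell^2$-type weighted spaces. Its right-hand side is locally Lipschitz as long as $v_{k+\frac12}$ stays in $[b+\underline v,\bar v]$, since $p$ is smooth there. Picard iteration therefore gives a solution on some short interval. The task is then to show, by a continuity (bootstrap) argument, that this interval can be taken uniformly long, namely $t_0=t_0(\vartheta,\underline v,\bar v,\mathscr E_0)$ independent of $h\le h_0$.

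I would assume a priori that \eqref{local-01} holds on $[0,t]$ and that $\mathscr A(t)+\mathscr B(t)\le 2C_1\mathscr E_0$, and then recover strictly better bounds for $t\le t_0$.

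\textbf{Step 1 (basic energy).} Multiply the momentum equation by $hu_k$ and the mass equation by $h\big(p(\mathcal V)-p(v_{k+\frac12})\big)$, using the discrete skew-adjointness of $\Delta_h$. Because $p$ is not monotone, the relative potential $\int_{\mathcal V}^{v}(p(\mathcal V)-p(s))ds$ is not coercive, so I would not rely on it. Instead I estimate $h\sum (v_{k+\frac12}-\mathcal V)^2$ directly from $\frac{d}{dt}(v-\mathcal V)=\Delta_h u$, using the bound $|p'|\le C$ from \eqref{eq:vvv} and Gronwall. The key point is that on $[0,t_0]$ all terms are controlled with constants $Ce^{Ct_0}$.

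\textbf{Step 2 (piecewise gradient of $v$).} Following Hoff, use the effective-flux identity: $\epsilon\,\frac{d}{dt}\Delta_h(\ln v)_k=\frac{d}{dt}u_k+\Delta_h p_k$ off the jump nodes $k\ne\pm N_*$. Multiply by $\epsilon\Delta_h(\ln v)_k$ and sum over $k\ne\pm N_*$. This gives $\|\Delta_h v\|_{\mathrm{pw}}^2\le C\mathscr C_0+C\int_0^t\big(\|\Delta_h v\|_{\mathrm{pw}}^2+\|u\|^2\big)$. At $k=\pm N_*$ the same identity, read as a jump relation, yields \eqref{local-07}. There the discrete analogue of \eqref{eq:jump-alpha0} holds up to a consistency error, which is bounded by $h^{1/2}$ times the local $H^1$-type norm of $u$ near the node.

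\textbf{Step 3 (weighted estimates).} Multiply the momentum equation by $t^{\frac12}u_k'$ and by $t^{\frac12+\vartheta}u_k'$, and differentiate the scheme in time and test with $t^{\frac32+\vartheta}u_k'$. This gives $\mathscr B$, and $\mathscr F$ as a byproduct. The weights absorb the singularity at $t=0$ caused by $u_0$ being only $L^2\cap\mathrm{BV}$. The $\operatorname{Var}u_0$ part of $\mathscr C_0$ is used, as in Hoff, to bound $\int_0^t s^{-\frac12}\ldots$ terms via an $L^1$ estimate on $\Delta_h u$.

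\textbf{Step 4 (Hölder bounds and closing).} The Hölder bounds \eqref{local-04}--\eqref{local-06} follow from discrete Sobolev inequalities. In space, $|w_k-w_{k'}|\le|x_k-x_{k'}|^{1/2}\|\Delta_h w\|$ on each piece. In time, I interpolate with $\int|v'|=\int|\Delta_h u|$ and the weighted bounds. In particular, $\sup_k|v_{k+\frac12}(t)-v_{k+\frac12}(0)|\le Ct^{\frac12-\frac\vartheta4}\mathscr C_0^{1/2}$. Choosing $t_0$ small makes this at most $\underline v/2$, which strictly improves the assumed range from $[b+2\underline v,\bar v-\underline v]$ to within $[b+\underline v,\bar v]$ and closes the bootstrap.

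\textbf{Main obstacle.} I expect the hardest part to be Step 2 near the jump nodes together with the non-coercive pressure. On $[0,t_0]$ the elliptic region $(\alpha,\beta)$ may be visited, so one cannot exploit $p'<0$. Everything must therefore be closed by Gronwall with $|p'|\le C$, which forces $t_0$ to be small. The nodes $\pm N_*$ must also be excluded from the gradient sums in a way compatible with summation by parts. My first attempt would be to split each sum at $\pm N_*$, keep the resulting boundary terms $u_{\pm N_*}\,[\![\cdot]\!]_h$, and bound them by $\|u\|_{L^\infty}\le C\|u\|^{1/2}\|\Delta_h u\|^{1/2}$ times the jump size, which is controlled by $\widetilde{\mathscr C}$ and the range bounds.
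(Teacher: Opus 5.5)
Your Steps 1 and 2, the jump ODE at $\pm N_*$, and the range closure in Step 4 follow the paper's route. The paper does the same Young/Gronwall zero-order estimate with only $|p'|\le C$, the identity $\epsilon\frac{d}{dt}\Delta_h(\ln v)_k=u_k'+\Delta_h p_k$ summed over $k\neq\pm N_*$, and integrates $\|\Delta_h u\|_{\ell^\infty}$ in time.

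\textbf{The gap is in Step 3}, in the $\sup_t t^{1/2}\sum_k h(\Delta_h u_{k+\frac12})^2$ component of $\mathscr{B}$. Testing the momentum equation with $t^{1/2}u_k'h$ gives
\[
\frac{\epsilon}{2}\,t^{\frac12}\sum_k h\,\frac{(\Delta_h u_{k+\frac12})^2}{v_{k+\frac12}}+\int_0^t s^{\frac12}\sum_k h\,(u_k')^2\,\mathrm{d}s
=\frac{\epsilon}{4}\int_0^t s^{-\frac12}\sum_k h\,\frac{(\Delta_h u_{k+\frac12})^2}{v_{k+\frac12}}\,\mathrm{d}s+\cdots .
\]
The first term on the right is not bounded by $\mathscr{C}_0$ uniformly in $h$. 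Already for the heat equation with a jump of size $J$ in $u_0$, one has $\|u_x(s)\|^2\approx J^2(8\pi\epsilon s)^{-1/2}$ for small $s$. So the integrand is of order $s^{-1}$, and for the scheme the integral grows like $\log(1/h)$, since discrete smoothing only cuts it off at $s\sim h^2$.

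An $\ell^1$ bound on $\Delta_h u$ does not rescue this. We have $\|\Delta_h u\|^2\le\|\Delta_h u\|_{\ell^1}\|\Delta_h u\|_{\ell^\infty}$, and $\|\Delta_h u(s)\|_{\ell^\infty}$ decays no faster than $s^{-1/2}$, so the integrand remains of order $s^{-1}$. This is exactly the obstruction the paper points out in the remark after the lemma.

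The failure propagates through the rest of your outline, because several later steps consume this sup bound:
\begin{itemize}
\item the $t^{\frac12+\vartheta}$ estimate, whose right side contains $\int_0^t s^{-\frac12+\vartheta}\|\Delta_h u\|^2\,\mathrm{d}s$;
\item the $\ell^\infty$ bound on $\Delta_h u$ that keeps $v$ in $[b+\underline v,\bar v]$;
\item the time-H\"older bounds;
\item the $O(h^{1/2})$ error in \eqref{local-07}, which is $\epsilon^{-1}h\int_0^t e^{\cdots}u'_{\pm N_*}\,\mathrm{d}s$ and is naturally controlled through the $s^{\frac12+\vartheta}$-weighted $\ell^2$ norm of $u'$.
\end{itemize}
So the bootstrap does not close as proposed.

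\textbf{The missing idea is the paper's duality argument.} Fix $t$ and solve the backward adjoint problem \eqref{psi}, with terminal data $\psi(t)=\Delta_h u(t)/\|\Delta_h u(t)\|$. Pair $\Delta_h u$ with $\psi$ and sum by parts twice. The viscous terms then cancel exactly, because the adjoint operator is $\psi\mapsto\epsilon v^{-1}\Delta_h\Delta_h\psi$. What remains is
\[
\Bigl(\sum_k h\bigl(\Delta_h u_{k+\frac12}(t)\bigr)^2\Bigr)^{\frac12}=\sum_k h\,\Delta_h u_{k+\frac12}(0)\,\psi_{k+\frac12}(0)+\int_0^t\sum_k h\,\Delta_h p_k\,\Delta_h\psi_k\,\mathrm{d}s .
\]
In the last sum, the terms $k=\pm N_*$ are the interface contributions $\llbracket p_{\pm N_*}\rrbracket_h\,\Delta_h\psi_{\pm N_*}$.

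The initial term is at most $\bigl(h\sum_k|\Delta_h u_{k+\frac12}(0)|\bigr)\,\|\psi(0)\|_{\ell^\infty}$. This is where the $\operatorname{Var}u_0$ part of $\mathscr{C}_0$ actually enters. It is paired with the smoothing of the backward parabolic problem over $[0,t]$; the needed rate is the heat-kernel $\ell^2\to\ell^\infty$ rate $t^{-1/4}$. This produces the $t^{1/2}$ weight without ever generating $\int_0^t s^{-1/2}\|\Delta_h u\|^2$.

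The interior pressure and interface terms are controlled through the weighted adjoint functional \eqref{Efun}, contributing $\mathscr{A}(t_0)t_0^{1/2}+h\mathscr{A}(t_0)$. Only once this sup bound is in hand does the $t^{\frac12+\vartheta}u_k'$ multiplier close $\mathscr{B}$. You need to replace your $t^{1/2}u_k'$ multiplier by this adjoint argument, or an equivalent Green's-function argument.
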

\vspace{3mm}

\begin{proof}
By the boundedness of initial data, the semi-discrete scheme is locally well-posed in $l^\infty\cap l^2$, yielding a local solution with existence time possibly depending on $h$. Our aim is to prove the existence time is uniform in $h$. We proceed under the bootstrap assumption $v_{k+\frac{1}{2}}(t)\in [b+\underline{v},\bar{v}]$ for all $k$ and $t\in[0,t_0]$, and derive uniform bounds for the discrete functionals.

First, we establish the basic $L^2$ energy estimate. Multiplying the two equations of \eqref{eq:scheme} by $(v_{k+\frac{1}{2}}-\mathcal{V})h$ and $u_k h$ respectively, summing over all indices and applying discrete integration by parts, we convert the viscous term into a positive definite dissipation term. Estimating the nonlinear pressure terms via Young's inequality and absorbing the dissipation part, we obtain the zero-order energy bound.

Next, we bound the discrete $H^1$ norm of $v$. Introduce the logarithmic transformation $\Phi=\ln v$. From the continuity equation, $(\Phi_{k+\frac{1}{2}})'_t = \Delta_h u_{k+\frac{1}{2}}/v_{k+\frac{1}{2}}$, combining with the momentum equation gives $\Delta_h (\Phi_k)'_t = \epsilon^{-1}\big(\Delta_h p_k + (u_k)'_t\big)$. Multiplying by $\Delta_h \Phi_k$, summing and integrating in time, and using the equivalence $\Delta_h \Phi \sim \Delta_h p \sim \Delta_h v$ under the pointwise bounds of $v$, we arrive at
\[
\sum_{\substack{k=-\infty\\k\neq \pm N_*}}^{\infty} h\bigl(\Delta_h v_{k}(t)\bigr)^2 \leq C\bigl(\mathscr{C}_{0}+\mathscr{A}(t)t\bigr).
\]
Combining with the zero-order energy estimate yields
\begin{equation}\label{eq:est-a0}
\mathscr{A}(t)\leq C\bigl(\mathscr{C}_0+\mathscr{A}(t)t\bigr).
\end{equation}

We then analyze the jump conditions at $x_{\pm N_*}$. Deriving the linear relation $\llbracket p_{\pm N_*}\rrbracket_h=\lambda_\pm\llbracket \Phi_{\pm N_*}\rrbracket_h$ 
where the coefficient is given by
$\lambda_{\pm}=\frac{p(v_{\pm N_*+\frac12})-p(v_{\pm N_*-\frac12})}{\ln(v_{\pm N_*+\frac12})-\ln(v_{\pm N_*-\frac12})}$, and solving the corresponding ODE for the jump of $\Phi$, we show that the jumps of $v$ and $p$ are uniformly controlled by $\mathscr{A}(t)$.
To estimate $\mathscr{B}(t_0)$, we employ a duality argument. We introduce a test sequence $\{\psi_{k+\frac{1}{2}}(s)\}_{0\le s\le t}$ solving the following backward adjoint parabolic problem with normalized terminal condition:
\begin{equation}\label{psi}
\begin{cases}
\displaystyle
\bigl(\psi_{k+\frac{1}{2}}\bigr)'_s + \frac{\epsilon\Delta_h(\Delta_h\psi)_{k+\frac{1}{2}}}{v_{k+\frac{1}{2}}} = 0, \quad 0\le s\le t,\\[8pt]
\displaystyle
\psi_{k+\frac{1}{2}}(t) = \frac{\Delta_h u_{k+\frac{1}{2}}(t)}{\displaystyle\left(\sum_{k=-\infty}^{\infty} h\bigl(\Delta_h u_{k+\frac{1}{2}}(t)\bigr)^2\right)^{\frac{1}{2}}}.
\end{cases}
\end{equation}
Applying discrete integration by parts in both space and time, we decompose the velocity gradient norm into three contributions: initial data, interior pressure gradient, and interface jump. To close the resulting terms, we introduce the auxiliary functional
\begin{equation}\label{Efun}
\mathscr{E}(t)\triangleq \sup_{0\le s\le t}(t-s)^{\frac{1}{2}}\sum_{k=-\infty}^{\infty} h\,\bigl(\Delta_h\psi_{k+\frac{1}{2}}(s)\bigr)^2
+\int_{0}^{t}\sum_{\substack{k=-\infty\\k\neq\pm N_*}}^{\infty} h\,\Bigl(\Delta_h\bigl(\Delta_h\psi/v\bigr)_{k}(s)\Bigr)^{2}\,\mathrm{d}s,
\end{equation}
which measures the time-weighted regularity of the backward adjoint solution. Multiplying the adjoint equation \eqref{psi} by $h\,\psi_{k+\frac{1}{2}}$ and applying discrete integration by parts twice on the viscous term, we obtain the adjoint energy identity
\begin{equation}\label{Eid}
\frac{1}{2}\frac{\mathrm{d}}{\mathrm{d}s}\sum_{k=-\infty}^{\infty}h\,\psi_{k+\frac{1}{2}}^{2}(s)
+ \epsilon\sum_{\substack{k=-\infty\\k\neq\pm N_*}}^{\infty} h\,\frac{\bigl(\Delta_h\psi_{k}(s)\bigr)^{2}}{v_{k}}
= \epsilon\sum_{\pm}\bigl[\!\bigl[\psi\bigr]\!\bigr]_{h}\bigl(\Delta_h\psi\bigr)(s),
\end{equation}
where the last sum, owing to the jump condition at $x_{\pm N_*}$, is controlled by $\mathscr{A}(t_0)$ in a time-weighted sense. Multiplying \eqref{Eid} by $(t-s)^{\frac{1}{2}}$ and integrating in $s$ from $0$ to $t$, then taking the supremum in $t$, we close the estimate
\begin{equation}\label{Ecl}
\mathscr{E}(t)\leq C\bigl(\mathscr{C}_{0}+\mathscr{A}(t_0)t_0^{\frac{1}{2}}+h\mathscr{A}(t_0)\bigr),\qquad 0\le t\le t_0,
\end{equation}
for small $t_0$ and $h\leq h_0$, where the term $\mathscr{A}(t_0)t_0^{1/2}$ accounts for the interior pressure-gradient contribution and $h\mathscr{A}(t_0)$ for the interface jumps. Finally, by the terminal condition in \eqref{psi}, the velocity gradient at time $t$ is expressible as the adjoint terminal data, so that $\sum_k h(\Delta_h u_{k+\frac12}(t))^2 \leq C\mathscr{E}(t)$, combining this with \eqref{Ecl} yields
\[
\sup_{0\leq t\leq t_{0}} t^{\frac{1}{2}}\sum_{k=-\infty}^{\infty} h\,\bigl(\Delta_h u_{k+\frac{1}{2}}(t)\bigr)^2
\leq C\bigl(\mathscr{C}_{0}+\mathscr{A}(t_0)t_0^{\frac{1}{2}}+ h\mathscr{A}(t_0)\bigr).
\]

We proceed to the higher-order time-weighted estimate. Multiplying the momentum equation by $t^{\frac{1}{2}+\vartheta}(u_k)'_t h$, summing and integrating, we split the right-hand side into boundary, dissipation and nonlinear terms. Using a discrete Sobolev embedding inequality to bound $\|\Delta_h u\|_{L^\infty}$, we close the estimate for the cubic nonlinear term and obtain
\[
\mathscr{B}(t_0)\leq C\left(\mathscr{C}_0+\bigl(\mathscr{A}(t_0)+\mathscr{B}(t_0)+\mathscr{B}^2(t_0)\bigr)t_0^{\frac{\vartheta}{2}}+ h\bigl(\mathscr{A}(t_0)+\mathscr{A}^2(t_0)+\mathscr{B}^2(t_0)\bigr)\right).
\]

For sufficiently small $t_0$ and $h\leq h_0$, a bootstrap argument yields $\mathscr{A}(t_0)+\mathscr{B}(t_0)\leq C\mathscr{C}_0$ uniformly in $h$. Integrating the $L^\infty$ bound of $\Delta_h u$ further shows that $v_{k+\frac{1}{2}}(t)$ remains within $[b+\underline{v},\bar{v}]$, which closes the bootstrap assumption.

Finally, spatial H\"older continuity follows directly from the definitions of $\mathscr{A}$ and $\mathscr{B}$. Temporal H\"older continuity of $v$ is obtained by integrating the velocity gradient, and that of $u$ follows from a local averaging argument. The jump bound is a direct consequence of earlier estimates. This completes the proof.
\end{proof}
\vspace{3mm}

\begin{remark}
We use solutions to the backward adjoint parabolic problem as test functions, a duality method built to estimate the time-weighted functional $\mathscr{B}(t_0)$. This method bypasses critical obstacles that standard forward multiplier identities fail to handle.
Standard energy estimates test the momentum equation against $u$ or $u_t$, and only produce integrated dissipation bounds
\[
\int_0^t \sum_{k} h(\Delta_h u_{k+\frac{1}{2}})^2 ds.
\]
These cannot control the pointwise supremum
\[
\sup_{0\leq t\leq t_0} t^{\frac{1}{2}}\sum_{k} h(\Delta_h u_{k+\frac{1}{2}})^2,
\]
present in $\mathscr{B}(t_0)$. The backward adjoint setup embeds $\Delta_h u(t)$ directly in the adjoint's terminal condition. After discrete integration by parts in space and time, the $l^2$ norm of $\Delta_h u(t)$ separates into three manageable parts: initial data, interior pressure gradients, and interface jumps.
The backward operator is the discrete adjoint of the viscous term in the semi-discrete system. This adjoint symmetry cancels all second-order viscous cross terms after integration by parts, removing complicated nonlinear couplings that appear with generic test functions.
The backward evolution from time $t$ back to $0$ naturally carries a weight $t-s$, matching the singular factor $t^{\frac12}$ inside $\mathscr{B}(t_0)$. The adjoint solution $\psi$ gains built-in temporal decay, so we capture small-time blow-up of velocity gradients without extra artificial singular weights.
This duality construction gives uniform bounds independent of mesh width $h$, a necessary ingredient to close the bootstrap argument for local existence.
\end{remark}
\vspace{3mm}

We next derive bounds for the functional $\mathscr{F}(t)$ in the lemma below. These high-order regularity estimates lay a critical foundation for our subsequent analysis of the pointwise interface behavior.
\vspace{3mm}

\begin{lemma}\label{thm-exsit3}
Let the parameters $\vartheta$, $\underline{v}$, $\bar v$ and $\mathscr{E}_0$ be as in Lemma \ref{thm-exsit2}. Assume that $(v_{k+\frac{1}{2}}, u_k)$ solves the discrete system \eqref{eq:scheme} on $[0,t_0]$ for some $t_0\leq 1$, with initial energy $\mathscr{C}_0\leq \mathscr{E}_0$, and that the energy bounds \eqref{local-01}--\eqref{local-02} hold with constant $C=C(\vartheta, \underline{v}, \bar{v}, \mathscr{E}_0)$. Then there exists a constant $C=C(\vartheta, \underline{v}, \bar{v}, \mathscr{E}_0)$ such that
\begin{align}
&\mathscr{F}(t_0)\leq C\mathscr{C}_0, \label{local-11}\\
&\|\Delta_h u(t)\|_{L^\infty}\leq C\mathscr{C}_0^{\frac{1}{2}}t^{-\frac{1}{2}-\frac{\vartheta}{4}},\label{local-12}\\
&\left\langle\frac{\epsilon\Delta_h u_{k+\frac{1}{2}}}{v_{k+\frac{1}{2}}}-p_{k+\frac{1}{2}}\right\rangle_{\{t\geq \tau\}}^{1/2,1/4}
\leq C\mathscr{C}_0^{\frac{1}{2}}\tau^{-\frac{3}{4}-\frac{\vartheta}{2}}.\label{local-13}
\end{align}
\end{lemma}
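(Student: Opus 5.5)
The plan follows Hoff's framework for discontinuous solutions. The quantity to track is the discrete effective viscous flux
\[
F_{k+\frac12}\triangleq \frac{\epsilon\Delta_h u_{k+\frac12}}{v_{k+\frac12}}-p_{k+\frac12}.
\]
By the momentum equation in \eqref{eq:scheme}, $(u_k)'_t=\Delta_h F_k$, including across the tracked nodes $\pm N_*$. So $F$ is the discrete analogue of a function that is continuous across the interfaces, even though $v$ and $p$ jump there.

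\textbf{Step 1: the bound \eqref{local-11}.} I differentiate the momentum equation in $t$, which gives $(u_k)''_{tt}=\Delta_h (F_k)'_t$. I multiply by $t^{\frac32+\vartheta}h\,(u_k)'_t$, sum over $k$, and sum by parts (no interface boundary terms arise, since $F$ is a flux). This produces the dissipation $\int_0^{t_0} t^{\frac32+\vartheta}\sum_k h\,\epsilon(\Delta_h u'_{k+\frac12})^2/v\,\mathrm{d}t$. The remaining terms are of three kinds:
\begin{itemize}
\item the weight-derivative term $\int t^{\frac12+\vartheta}\sum h (u'_k)^2$, which is controlled by $\mathscr{B}(t_0)$ from \eqref{local-02};
\item the cubic term $\sum h\, \Delta_h u'\,(\Delta_h u)^2/v^2$;
\item the pressure term $\sum h\,\Delta_h u'\, p'(v)\Delta_h u$.
\end{itemize}
I absorb the last two by Young's inequality into the dissipation. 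What is left is $\int t^{\frac32+\vartheta}\|\Delta_h u\|_{L^\infty}^2\sum h(\Delta_h u)^2$ together with $\int t^{\frac32+\vartheta}\sum h(\Delta_h u)^2$. The second is bounded by $\mathscr{A}$. For the first, I use the discrete Sobolev inequality on each of the three pieces $I_{-\infty},I_*,I_{+\infty}$ in the form
\[
\|F\|_{L^\infty}^2\le C\|F\|\,\|\Delta_h F\|_{\rm pw},\qquad \Delta_h F=u'.
\]
Combined with $\|F\|\le C(\|\Delta_h u\|+\|v-\mathcal V\|)$ and the $l^\infty$ bound on $p$, this gives $\|\Delta_h u\|_{L^\infty}\le C(1+\|\Delta_h u\|^{1/2}\|u'\|^{1/2})$. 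Those factors carry the weights already available in $\mathscr{B}$, and since $t_0\le1$ Gronwall closes the estimate with $\mathscr{F}(t_0)\le C\mathscr{C}_0$.

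The bound on $\sum h[\Delta_h(\Delta_h u/v)]^2$ off $\pm N_*$ in $\mathscr F$ then follows from the identity
\[
\epsilon\Delta_h(\Delta_h u/v)=u'+\Delta_h p,
\]
using the piecewise gradient bound in $\mathscr{A}$.

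\textbf{Step 2: the estimate \eqref{local-12}.} I apply the same interpolation to $F$, now with the weighted bounds:
\[
\|F(t)\|_{L^\infty}\le C\big(\|F\|\,\|u'\|\big)^{1/2}\le C\big(t^{-\frac14}\mathscr{B}^{1/2}\cdot t^{-\frac34-\frac\vartheta2}\mathscr{F}^{1/2}\big)^{1/2}.
\]
This yields $t^{-\frac12-\frac\vartheta4}\mathscr{C}_0^{1/2}$. Since $p$ is bounded, the bound passes to $\Delta_h u=vF/\epsilon+vp/\epsilon$. The constant part is harmless because $\mathscr{C}_0\le\mathscr{E}_0$ and $t\le1$; more carefully, I replace $p$ by $p-p(\mathcal V)$, which is small in $L^2$.

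\textbf{Step 3: the Hölder estimate \eqref{local-13}.} For spatial Hölder-$\frac12$ on $\{t\ge\tau\}$, I write
\[
|F_j-F_{j'}|\le |x_j-x_{j'}|^{1/2}\|\Delta_h F\|=|x_j-x_{j'}|^{1/2}\|u'\|,
\]
and this holds across the interfaces as well, since $\Delta_h F=u'$ everywhere. By $\mathscr{F}$, $\|u'\|\le C\mathscr{C}_0^{1/2}\tau^{-\frac34-\frac\vartheta2}$.

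For time Hölder-$\frac14$, I use the local averaging trick. For $|t-s|=\delta$, I compare $F$ with its average over an interval of length $\delta^{1/2}$. The spatial part contributes $\delta^{1/4}\|u'\|$. The averaged time difference is computed by testing $\partial_tF$ against a discrete bump of width $\delta^{1/2}$: summing by parts moves the derivative onto the bump, and $\partial_t F$ involves $\Delta_h u'/v$ and $p'(v)\Delta_h u$. These are controlled by the $\int t^{\frac32+\vartheta}\|\Delta_h u'\|^2$ part of $\mathscr F$ and by \eqref{local-12}.

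\textbf{Main obstacle.} I expect Step 1 to be the hardest, specifically closing the cubic nonlinearity with the correct time weights near $t=0$. The exponents $\frac32+\vartheta$ against the $\mathscr{B}$ weight $\frac12+\vartheta$ must balance exactly. Using the piecewise interpolation of $F$, rather than of $\Delta_h u$, is essential for this, because $\Delta_h u$ itself is not small-jump controlled at $\pm N_*$.
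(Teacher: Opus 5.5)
Your proposal is correct and follows essentially the paper's route: test the time-differentiated momentum equation against $t^{\frac32+\vartheta}h\,u_k'$, close the cubic term by discrete Sobolev interpolation using $\mathscr{B}$, obtain the $L^\infty$ bound from $\mathscr{B}$ and $\mathscr{F}$, and get the H\"older estimate from $\Delta_h w_k=u_k'$ plus local averaging. Using the flux $F$ from the outset (the paper introduces it only in the last step) is a real improvement, since it shows no interface terms arise in the summation by parts and it removes the $h$-dependent remainders and the $h=O(t)$ proviso in the paper's $L^\infty$ step.

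One slip needs fixing in Step 2. An $O(1)$ contribution from $\|p\|_{L^\infty}$ is not harmless against $C\mathscr{C}_0^{1/2}t^{-\frac12-\frac\vartheta4}$ when $\mathscr{C}_0$ is small, so "bounded" does not suffice. Smallness of $p-\sigma_0$ in $L^2$ does not suffice either. What you need is $\|p-\sigma_0\|_{L^\infty}\le C\|v-\mathcal V\|_{L^\infty}\le C\mathscr{A}^{1/2}$. This follows from $p(\mathcal V)\equiv\sigma_0$ together with piecewise discrete Sobolev and the gradient bound in $\mathscr{A}$.
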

\vspace{3mm}
\begin{proof}
Differentiating the momentum equation of \eqref{eq:scheme} in time and using the continuity equation $v^{\prime}_{k+\frac{1}{2}}=\Delta_h u_{k+\frac{1}{2}}$, we obtain
\begin{equation}\label{ueqn}
u_k'' = -\Delta_h\Bigl(p'(v)v^{\prime}\Bigr)_{k} + \Delta_h\Bigl(\epsilon\frac{\Delta_h u^{\prime}_{k+\frac{1}{2}}}{v_{k+\frac{1}{2}}}-\epsilon\frac{\bigl(\Delta_h u_{k+\frac{1}{2}}\bigr)^{2}}{v_{k+\frac{1}{2}}^{2}}\Bigr),
\end{equation}
where the quadratic viscous term $\bigl(\Delta_h u_{k+\frac{1}{2}}\bigr)^{2}/v_{k+\frac{1}{2}}^{2}$ arises from the continuity equation $v^{\prime}=\Delta_h u$. Multiplying \eqref{ueqn} by $s^{\frac{3}{2}+\vartheta} u_k'(s)\,h$, summing over all indices, integrating from $0$ to $t$ and applying discrete integration by parts, we arrive at the higher-order time-weighted energy identity
\begin{equation}\label{Fid}
t^{\frac{3}{2}+\vartheta}\sum_{k=-\infty}^{\infty} h\,\bigl(u_k'(t)\bigr)^{2}
+ \int_{0}^{t}\sum_{k=-\infty}^{\infty} s^{\frac{3}{2}+\vartheta} h\,\bigl(\Delta_h u^{\prime}_{k+\frac{1}{2}}(s)\bigr)^{2} \mathrm{d}s
= \mathrm{Initial}+\mathrm{Nonlin}+\mathrm{Jump},
\end{equation}
where the nonlinear contribution satisfies
\[
\mathrm{Nonlin}\leq C\int_{0}^{t}s^{\frac{1}{2}+\vartheta}\Bigl(\|\Delta_h u\|_{L^{\infty}}\sum_{k}h\,\bigl(u_k'(s)\bigr)^{2}\Bigr)\mathrm{d}s
+C\int_{0}^{t}s^{\frac{3}{2}+\vartheta}\|\Delta_h u\|_{L^{\infty}}^{2}\sum_{k}h\,\bigl(\Delta_h u_{k+\frac{1}{2}}(s)\bigr)^{2}\mathrm{d}s,
\]
and the interface terms $\mathrm{Jump}$ are bounded by $C\mathscr{A}(t_0)$ through the jump ODE in Lemma \ref{thm-exsit2}. Estimating $\|\Delta_h u\|_{L^{\infty}}$ by the discrete Sobolev embedding inequality as in the proof of Lemma \ref{thm-exsit2}, applying Young's inequality to the two integrals in $\mathrm{Nonlin}$, and invoking the uniform bound $\mathscr{B}(t_0)\leq C\mathscr{C}_0$ from \eqref{local-02}, all contributions on the right-hand side of \eqref{Fid} are absorbed for sufficiently small $t_0$ and $h\leq h_0$. This yields
\[
t^{\frac{3}{2}+\vartheta}\sum_{k=-\infty}^{\infty} h\,\bigl(u_k'(t)\bigr)^2
+ \int_{0}^{t}\sum_{k=-\infty}^{\infty} s^{\frac{3}{2}+\vartheta} h\,\bigl(\Delta_h u_{k+\frac{1}{2}}'(s)\bigr)^2 ds
\leq C\mathscr{C}_0.
\]
Combining with system \eqref{eq:scheme} and taking the supremum in time yields
\[
\mathscr{F}(t)\leq C\mathscr{C}_0,
\]
which establishes estimate \eqref{local-11}.
We next improve the $L^\infty$ bound of $\Delta_h u$ using the regularity of $\mathscr{F}$. By the discrete Sobolev embedding inequality and the bounds for $\mathscr{B}$ and $\mathscr{F}$, we have
\[
\begin{aligned}
\|\Delta_h u(t)\|_{L^\infty}^2
&\leq C\biggl(t^{-\frac12}\mathscr{B}(t) + t^{-1-\frac{\vartheta}{2}}\mathscr{B}^{\frac12}(t)\mathscr{F}^{\frac12}(t) + \bigl(1+h^{\frac12}\bigr)\mathscr{C}_0^{\frac12} + h^{\frac12}t^{-\frac34-\frac{\vartheta}{2}}\mathscr{F}^{\frac12}(t)\biggr) \\
&\leq C t^{-1-\frac{\vartheta}{2}}\mathscr{C}_{0},
\end{aligned}
\]
for sufficiently small $t$ with $h=O(t)$, which proves \eqref{local-12}.

It remains to verify the temporal H\"older continuity \eqref{local-13}. Define the discrete stress
\(
w_{k+\frac{1}{2}} = \frac{\epsilon\Delta_h u_{k+\frac{1}{2}}}{v_{k+\frac{1}{2}}} - p_{k+\frac{1}{2}},
\)
which satisfies $\Delta_h w_k = u_k'$ by the momentum equation. By definition of $\mathscr{F}$,
\[
\sum_{k=-\infty}^{\infty} h\,\bigl(\Delta_h w_k\bigr)^2
\leq \sum_{k=-\infty}^{\infty} h\,\bigl(u_k'\bigr)^2
\leq C t^{-\frac{3}{2}-\vartheta}\mathscr{F}(t).
\]
For any $0<\tau\leq t_1\leq t_2$, we bound the temporal increment of $w$ via a local averaging argument. Combining the Cauchy-Schwarz inequality with the energy bounds for $w'$ (expressed in terms of $\Delta_h u'$, $(\Delta_h u)^2$ and $\Delta_h u$) and choosing the averaging scale $Jh=O(|t_2-t_1|^{1/4})$, we obtain
\[
\bigl|w_{k+\frac{1}{2}}(t_2)-w_{k+\frac{1}{2}}(t_1)\bigr|
\leq C\mathscr{C}_0^{\frac{1}{2}} \tau^{-\frac{3}{4}-\frac{\vartheta}{2}} |t_2-t_1|^{\frac{1}{4}}.
\]
This yields the desired semi-norm estimate \eqref{local-13} for $w_{k+\frac{1}{2}}$. The proof of Lemma \ref{thm-exsit3} is complete.
\end{proof}
\vspace{3mm}

We proceed to establish the local existence of weak solutions to system \eqref{ns-lagrange}, using the semi-discrete scheme \eqref{eq:scheme} and the a priori estimates derived in Lemmas \ref{thm-exsit2} and \ref{thm-exsit3}.
\vspace{3mm}
\begin{proposition}\label{prop:local-existence}[Local Existence]
Let $\underline{v}$ and $\bar{v}$ be as in Theorem \ref{thm:large-time-behavior}, and let $M_{0}>0$ be an arbitrary constant. Then there exist a constant $C = C(\underline{v}, \bar{v}, M_{0})>0$ and a time $t_0>0$ such that, if the initial data $(v_0,u_0)$ satisfy
\begin{align}
& v_0\in \big[b+2\underline{v},\bar{v}-\underline{v}\big] \quad \text{a.e.}, \label{eq:v-b0}\\
& \|v_0-\mathcal{V}, u_0\|^2 + \|(v_0)_x\|_{\mathrm{pw},L^2}^2 + (\operatorname{Var}u_0)^2 + \|v_0-\mathcal{V}\|_{L^1}=C_0 \leq M_{0}, \label{eq:c0-b0}
\end{align}
then system \eqref{ns-lagrange} admits a local weak solution $(v,u)$ on $[0,t_0]$ fulfilling
\begin{align}
& v\in \big[b+\underline{v},\bar{v}\big] \quad \text{a.e.}, \label{eq:v-b1}\\
& A(t_0)+B(t_0)+F(t_0)\leq C C_0, \label{eq:c0-b1}
\end{align}
together with the following regularity estimates:
\begin{align}
&\langle v\rangle_{K\times[0,t_{0}]}^{\frac{1}{2},\frac{1}{2}-\frac{\vartheta}{4}}\leq C C_{0}^{\frac{1}{2}},\quad K=I_{\pm\infty},\,I_*,\label{eq:loc-v1}\\		
&\langle u\rangle_{\mathbb{R}\times[\tau,t_{0}]}^{\frac{1}{2},\frac{1}{4}} \leq C C_{0}^{\frac12}\tau^{-\frac{1}{4}-\frac{\vartheta}{2}},\label{eq:loc-u1} \\
&\lVert u_{x}(\cdot,t)\rVert_{L^{\infty}(\mathbb{R})} \leq C C_{0}^{\frac{1}{2}}t^{-\frac{1}{2}-\frac{\vartheta}{4}}, \label{eq:loc-u2}\\
&\left\langle\frac{\epsilon u_{x}}{v}-p\right\rangle_{\mathbb{R}\times[\tau,t_{0}]}^{\frac{1}{2},\frac{1}{4}} \leq C C_{0}^{\frac12}\tau^{-\frac{3}{4}-\frac{\vartheta}{2}}.\label{eq:loc-up} 
\end{align}
In particular, estimates \eqref{eq:loc-v1} and \eqref{eq:loc-up} ensure that $v$ and $u_x$ admit well-defined one-sided traces at $x=\pm L$, so that jump condition \eqref{jump0} holds. Finally
	\begin{equation}\label{eq:jump-loc}
		[\ln v(\pm L,t)]=[\ln v(\pm L,0)]\exp\Big(\epsilon^{-1}\int_{0}^{t_0}\lambda_{\pm}(s)ds\Big),
	\end{equation}
where \(\lambda_{+}(t)
= \frac{p(v(L^+,t))-p(v(L^-,t))}{\ln(v(L^+,t))-\ln(v(L^-,t))}\) and \(\lambda_{-}(t)
= \frac{p(v(-L^+,t))-p(v(-L^-,t))}{\ln(v(-L^+,t))-\ln(v(-L^-,t))}\)
\end{proposition}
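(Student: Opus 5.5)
The plan is the standard compactness argument of Hoff type. We build approximate solutions from the semi-discrete scheme \eqref{eq:scheme}, use the $h$-uniform bounds of Lemmas \ref{thm-exsit2}--\ref{thm-exsit3}, and pass to the limit $h\to0$.

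\textbf{Step 1: discrete data.} First discretize the initial data so that the discrete quantities are controlled by the continuous ones. We set $v_{k+\frac12}(0)$ equal to the cell average of $v_0$ over $[x_k,x_{k+1}]$, except that the cell containing $\pm L$ is assigned so that the jump is placed exactly at the node $x_{\pm N_*}$. We take $u_k(0)$ as averages of a mollification of $u_0$. Then:
\begin{itemize}
\item the pointwise bound $b+2\underline v\le v_{k+\frac12}(0)\le \bar v-\underline v$ is inherited from \eqref{eq:v-b0} by averaging;
\item $\mathscr C_0\le C C_0$ follows, with the discrete gradient sum away from $\pm N_*$ bounded by $\|(v_0)_x\|^2_{\mathrm{pw},L^2}$ via Jensen;
\item the total variation term $\big(h\sum|\Delta_h u(0)|\big)^2$ is bounded by $(\operatorname{Var}u_0)^2$.
\end{itemize}
Lemma \ref{thm-exsit2} with $\mathscr E_0=CM_0$ then yields discrete solutions on a common interval $[0,t_0]$ satisfying \eqref{local-01}--\eqref{local-07}. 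Lemma \ref{thm-exsit3} adds \eqref{local-11}--\eqref{local-13}.

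\textbf{Step 2: compactness.} Extend the mesh functions to $\mathbb{R}\times[0,t_0]$ by piecewise-linear interpolation in $x$ (constant on cells for $v$), keeping the interpolation separate on $I_{-\infty},I_*,I_{+\infty}$.
\begin{itemize}
\item By \eqref{local-04}, the interpolants $v^h$ are uniformly H\"older on each closed piece $\overline K\times[0,t_0]$, up to $O(h^{1/2})$ errors.
\item By \eqref{local-06} and \eqref{local-13}, the interpolants $u^h$ and $w^h=\epsilon\Delta_hu/v-p$ are uniformly H\"older on $\mathbb{R}\times[\tau,t_0]$ for every $\tau>0$.
\end{itemize}
Arzel\`a--Ascoli plus a diagonal argument in $\tau\downarrow0$ and in compact exhaustions of each piece gives a subsequence with $v^h\to v$ locally uniformly on each closed piece. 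On $t\ge\tau$ it also gives $u^h\to u$ and $w^h\to w$ locally uniformly. The $\mathscr A,\mathscr B$ bounds give weak-$*$ convergence of $\Delta_hu$ in $L^2$ and of $u'$ in weighted $L^2$.

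Since $v^h\to v$ uniformly, $p(v^h)\to p(v)$ and $1/v^h\to1/v$ uniformly. Hence $\epsilon u_x/v=w+p(v)$ is identified as a genuine limit, not just a weak one. Passing to the limit in the discrete weak form of \eqref{eq:scheme} is then routine, using summation by parts against grid restrictions of test functions $\phi$. The pointwise bounds \eqref{eq:v-b1} survive uniform limits. The estimate \eqref{eq:c0-b1} follows from lower semicontinuity of the norms, with pw-norms and the weights $\omega$ matching because $t_0\le1$. \eqref{eq:loc-v1}, \eqref{eq:loc-u1} and \eqref{eq:loc-up} pass directly to the limit. For \eqref{eq:loc-u2} we use weak-$*$ lower semicontinuity of $L^\infty$ norms from \eqref{local-12}.

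\textbf{Step 3: traces and the jump relation.} This is where I expect the main difficulty. The H\"older continuity of $v$ up to $\pm L$ from each side gives one-sided traces $v(\pm L^\pm,t)$. The continuity of $w$ across $\pm L$ for $t>0$ yields \eqref{jump0}, since $u$ is continuous and the jump of $w$ vanishes. To get \eqref{eq:jump-loc}, I would pass to the limit in \eqref{local-07}. The discrete jumps $[\ln v_{\pm N_*}]$ converge to the continuous jumps because $v^h$ converges uniformly up to the interface from each side and $x_{\pm N_*}\to\pm L$. The coefficients $\lambda_\pm^h$ converge uniformly in $t$ to $\lambda_\pm$ by continuity of $(a,b)\mapsto (p(a)-p(b))/(\ln a-\ln b)$, which extends as $\xi p'(\xi)$ on the diagonal. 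The right side of \eqref{local-07} is $O(h^{1/2})\to0$. The delicate point is ensuring that the H\"older estimate \eqref{local-04} genuinely holds up to the interface within each piece, uniformly in $h$, even though the node $x_{\pm N_*}$ only approximates $\pm L$ to within $h/2$. I would handle this by translating each piece's mesh so that $\pm L$ is a node, which costs only an $O(h)$ perturbation of the data and of $\mathscr C_0$.
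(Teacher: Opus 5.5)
Your route is the paper's: discretize the data, use Lemmas \ref{thm-exsit2}--\ref{thm-exsit3} for $h$-uniform bounds on a common interval, interpolate, extract limits with Arzel\`a--Ascoli, and pass to the limit in the scheme and in \eqref{local-07}. Your use of \eqref{local-13} to get locally uniform convergence of $w^h=\epsilon\Delta_h u/v-p$ fills in what the paper only calls a ``standard density argument''. That convergence gives a strong identification of $\epsilon u_x/v$ and of its one-sided traces. Likewise, your convergence of $\lambda^h_\pm$ via the diagonal extension $\xi p'(\xi)$ is a detail the paper omits.

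The step that fails is the claim that \eqref{eq:c0-b1} follows by lower semicontinuity ``with the weights matching because $t_0\le 1$''. The paper's proof makes the same unjustified assertion $A^h+B^h+F^h\le CC_0$ from \eqref{local-11}. The weights do match for $A$ and for the first two terms of $B$, but not elsewhere.

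\textbf{The extra term in $B$.} $B$ also contains $\int_0^{t_0}\omega\|u_x^2\|^2\,dt$, which has no counterpart in $\mathscr B$. This is easily repaired. Use $\|u_x\|_{L^4}^4\le\|u_x\|_{L^\infty}^2\|u_x\|^2$, bound the first factor by \eqref{local-12} and the second by $\sup t^{1/2}\|u_x\|^2\le CC_0$. The integrand is then at most $CC_0^2t^{-1/2-\vartheta/2}$, which is integrable.

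\textbf{The weights in $F$.} $\mathscr F$ carries the weight $t^{3/2+\vartheta}$ on both its supremum and its $\Delta_h u'$-integral. By contrast, $F$ in \eqref{eq:norm-F} has $\omega^{3/2}$ on the supremum and no weight at all on $\int_0^{t_0}\!\int u_{xt}^2$. Lower semicontinuity therefore yields only the $t^{3/2+\vartheta}$-weighted version of $F$. The unweighted term cannot be obtained from $C_0$ at all. Already for $u_t=u_{xx}$ with $u_0=\chi_{[0,1]}$ (admissible, since only $\operatorname{Var}u_0$ is controlled) one has $\|u_{xt}(\cdot,t)\|^2\sim t^{-5/2}$ as $t\to0^+$, which is not integrable. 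So your argument proves \eqref{eq:c0-b1} with $F$ replaced by its $t^{3/2+\vartheta}$-weighted analogue. That is all the paper uses later, at the positive time $\tau_1$.

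Your Step 3 worry about mesh alignment is unnecessary. Since $|x_{\pm N_*}\mp L|\le h/2$, the centres $x_{\pm N_*-\frac12}$ and $x_{\pm N_*+\frac12}$ already lie on the correct sides of $\pm L$, so \eqref{local-04} holds on the correct pieces; alternatively, take $h=L/N$. Translating the mesh piece by piece is in any case incompatible with a single staggered grid.
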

\vspace{3mm}

\begin{proof}
For sufficiently small mesh size $h$, we introduce integer grid points $x_k = kh$ and half-integer grid points $x_{k+\frac12} = (k+\frac12)h$ for $k\in\mathbb{Z}$, consistent with the notation above. Let $x_{\pm N_*}$ denote the grid nodes closest to the interface positions $\pm L$. The discrete initial data are prescribed as
\[
u_{k}(0)=\frac{1}{h}\int_{x_{k-\frac{1}{2}}}^{x_{k+\frac{1}{2}}}u_{0}(x)\,\mathrm{d}x,\qquad
v_{k+\frac12}(0)=v_0\big(x_{k+\frac12}\big).
\]
One readily verifies that
\begin{equation}\label{eq:p31}
\mathscr{C}_0\leq C C_0,
\end{equation}
where $\mathscr{C}_0$ is the initial energy functional defined in \eqref{eq:c00}. Since $v_{k+\frac12}(0)\in [b+2\underline{v},\bar{v}-\underline{v}]$ for all $k$, Lemma \ref{thm-exsit2} applies. Hence, for all $h\leq h_0$, the semi-discrete scheme \eqref{eq:scheme} with the above initial data admits a unique solution on $[0,t_0]$, where $t_0 = t_0(\vartheta, \underline{v},\bar{v}, M_{0})$ is independent of $h$. The resulting discrete solution satisfies the pointwise bounds $v_{k+\frac12}\in [b+\underline{v},\bar{v}]$, along with all regularity estimates \eqref{local-02}--\eqref{local-07} and \eqref{local-11}--\eqref{local-13}.

We construct piecewise linear interpolants $u^h(\cdot,t)$ and $v^h(\cdot,t)$ from the discrete solution:
\begin{equation*}
\begin{aligned}
u^h(x,t)&=\frac{x-x_k}{h}u_{k+1}(t)+\frac{x_{k+1}-x}{h}u_{k}(t),
&&x\in [x_k,x_{k+1}],\\
v^h(x,t)&=\frac{x-x_{k+\frac12}}{h}v_{k+\frac32}(t)+\frac{x_{k+\frac32}-x}{h}v_{k+\frac12}(t),
&&x\in \big[x_{k+\frac12},x_{k+\frac32}\big],\quad k \neq \pm N_*.
\end{aligned}
\end{equation*}
Near the interfaces $x=\pm L$ (i.e. for $k=\pm N_*$), the interpolant is modified as
\[
v^h(x,t)=
\begin{cases}
v_{k+\frac12}(t),& x_{k+\frac12}\leq x\leq \pm L,\\
v_{k+\frac32}(t),& \pm L<x\leq x_{k+\frac32}.
\end{cases}
\]
Combining estimates \eqref{local-04}--\eqref{local-06} with the initial bound \eqref{eq:p31}, the family $\{(v^h,u^h)\}_{h\leq h_0}$ is uniformly bounded; moreover $v^h$ is uniformly bounded and equicontinuous in both space and time on any compact subset of $\mathbb{R}\times[0,t_0]$, whereas $u^h$ is equicontinuous on any compact subset of $\mathbb{R}\times[\tau,t_0]$ for fixed $\tau>0$, since the bound \eqref{local-06} carries the singular factor $\tau^{-\frac{1}{4}-\frac{\vartheta}{2}}$. Note that $v^h$ is only piecewise H\"older continuous in space across the interface points.

By the Arzel\`a-Ascoli Theorem, we extract a subsequence along which $(v^h,u^h)$ converges uniformly on every compact subset of $\mathbb{R}\times[\tau,t_0]$ for each fixed $\tau>0$ to a limit pair $(v,u)$. The limit function $v$ retains piecewise spatial H\"older continuity, while both $u$ and $v$ are globally H\"older continuous in time. The pointwise bounds \eqref{eq:loc-v1}, \eqref{eq:loc-u1} and the interface jump relation \eqref{eq:jump-loc} follow directly from their discrete counterparts \eqref{local-04}--\eqref{local-07}. The remaining regularity estimates \eqref{eq:loc-u2} and \eqref{eq:loc-up} are immediate consequences of the high-order discrete bounds \eqref{local-12}--\eqref{local-13}.

We next define discrete counterparts $A^h$, $B^h$, $F^h$ of the continuous functionals in \eqref{eq:norm-A}--\eqref{eq:norm-F}, obtained by substituting the interpolants $(v^h,u^h)$ for the continuous solution and omitting terms involving the second-order derivative $\big(\frac{u_x}{v}\big)_x$. From inequalities \eqref{local-01}, \eqref{local-11} and \eqref{eq:p31}, we obtain the uniform bound
\begin{equation}\label{eq:p33}
A^h+B^h+F^h\leq C C_0.
\end{equation}
All bounds in \eqref{eq:c0-b1} carry over to the interpolants, except for the omitted second-derivative term. For any fixed $t$, for instance, we have
\[
\|u_x^h(\cdot,t)\|_{L^2}^2\leq C C_0 t^{-1/2}.
\]
Passing to a further subsequence if necessary, $u_x^{h_m}(\cdot,t)$ converges weakly in $L^2(\mathbb{R})$ to some limit $w$. By weak lower semicontinuity of the $L^2$ norm, $\|w\|_{L^2}^2 \leq C C_0 t^{-1/2}$. Since $u_x^{h_m}(\cdot,t)$ also converges to $u_x(\cdot,t)$ in the distributional sense, we identify $w = u_x(\cdot,t)$, which yields the desired bound $\|u_x(\cdot,t)\|_{L^2}^2 \leq C C_0 t^{-1/2}$. A standard density argument then confirms that the limit $(v,u)$ is a weak solution to system \eqref{ns-lagrange}.
The proof of Proposition \ref{prop:local-existence} is complete.
\end{proof}

\section{Global Existence and Large-Time Behavior}\label{sec4}
\indent\qquad
In this section, we employ energy integral arguments to derive a sequence of a priori estimates for the local weak solutions to \eqref{ns-lagrange} constructed in Section \ref{sec3}. With these estimates, we extend such local weak solutions to global ones, establish the temporal asymptotic stability of the phase-transition solutions obtained in Section \ref{sec2}, and ultimately complete the proof of Theorem \ref{thm:large-time-behavior}. 

We begin with necessary a priori estimates, assuming all physical quantities satisfy the regularity conditions required for subsequent analysis. This assumption is justified by the local existence theorem established in Section \ref{sec3}. For simplicity and without loss of generality, we set the viscosity coefficient $\epsilon=1$ in all forthcoming computations.
Let $U(v_\infty)$ and $U(v_{\mathrm{alt}})$ denote the neighborhoods introduced in \eqref{U-l,U-r}. Here $v_\infty$ and $v_{\mathrm{alt}}$, specified by \eqref{v-infty} and \eqref{valt} respectively, correspond to the two distinct Maxwell points associated with the two equilibrium phases. We then construct the potential function
\[
W_{v_\infty}=\int_{v}^{v_\infty}p(s)\mathrm{d}s\colon U(v_\infty,\delta)\to\mathbb{R},
\]
which obeys the identities
\begin{equation}\label{s-def}
  W_{v_\infty}(v_\infty) = 0,\quad W_{v_\infty}'(v) = -p(v),\quad W_{v_\infty}''(v)= -p'(v)> 0.
\end{equation}
An analogous potential function $W_{v_{\mathrm{alt}}}=\int_{v}^{v_{\mathrm{alt}}}p(s)\mathrm{d}s$ is well-defined over $U(v_{\mathrm{alt}},\delta)$. Furthermore, throughout all proofs within this section, $C$ stands for a generic positive constant independent of $t_0$.
The lemma below establishes the essential a priori bounds needed to analyze the large-time behavior of solutions.

\begin{lemma}\label{lem-apriori}
Let the neighborhoods $U(v_\infty)$, $U(v_{\mathrm{alt}})$ and the profile $\mathcal{V}$ be given as in Theorem \ref{thm:large-time-behavior}. There exist positive constants $\delta_{2}$ and $C_{2}$ such that, if the weak solution $(v,u)$ exists on the time interval $[0,t_0]$ and fulfills the spatial profile condition
\begin{align*}
&v \in
\begin{cases}
U(v_{\infty},\delta), & x\in(-\infty,-L)\cup(L,+\infty), \\
U\big(v_{\mathrm{alt}},\delta\big), & x\in(-L,L),
\end{cases}
\\
&A(t)+D(t)+G(t)< \infty,
\end{align*}
along with the small initial-data condition
\begin{equation}\label{eq:initial_small}
\begin{aligned}
E_0=\Big\| v_0-\mathcal{V}, u_0, u_{0x} \Big\|^2
+ \Big\| v_{0x}, \left(\frac{u_{0x}}{v_0}\right)_x \Big\|_{\mathrm{pw},L^2}^2 + \left| \operatorname{PV} \int_{-\infty}^{+\infty} (v_0-\mathcal{V}) \,\mathrm{d}x \right|
\equiv \widetilde C_0 \le \delta_2,
\end{aligned}
\end{equation}
then the uniform a priori estimate
\begin{equation}\label{eq:func_bound}
A(t) + D(t) + G(t) \le C_2 \widetilde C_0, \quad \forall\, 0 \le t \le t_0,
\end{equation}
holds. Here, $\operatorname{PV}$ stands for the Cauchy principal value integral, defined as
\begin{equation}\label{eq:pv_def}
\operatorname{PV} \int_{-\infty}^{+\infty} w(x) \,\mathrm{d}x = \lim_{R \to \infty} \int_{-R}^{R} w(x) \,\mathrm{d}x
\end{equation}
for any integrable function $w$ on $\mathbb{R}$.
\end{lemma}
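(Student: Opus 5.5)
The plan is a Hoff-type piecewise energy argument on the three components $I_{-\infty}$, $I_*$, $I_{+\infty}$, closed by continuity in $t$. The working hypothesis is that $v$ stays in the convex-potential neighbourhoods $U(v_\infty,\delta)$ and $U(v_{\mathrm{alt}},\delta)$. There $-p'>0$, so the relative potentials built from $W_{v_\infty}$ and $W_{v_{\mathrm{alt}}}$ of \eqref{s-def} are equivalent to $(v-\mathcal{V})^2$. We derive three layers of estimates, for $A$, for $D$ and for $G$, each bounded by $C\widetilde C_0$ plus higher-order terms of the form $C(A+D+G)^{3/2}$ or $C\delta(A+D+G)$. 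Since $A+D+G$ is finite and continuous in $t$, smallness of $\widetilde C_0\le\delta_2$ then closes \eqref{eq:func_bound} by a standard continuity argument.

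\textbf{Step 1 (zeroth order).} Multiply the momentum equation by $u$ and the mass equation by $p(\mathcal{V})-p(v)$, which equals $-(p(v)-\sigma_0)$ on each piece. Integrate separately over $I_{-\infty}$, $I_*$, $I_{+\infty}$.
\begin{itemize}
\item Because $u$ is continuous across $\pm L$ and $\llbracket p-u_x/v\rrbracket=0$ by \eqref{jump0}, the boundary terms $u(p-u_x/v)$ cancel.
\item We need $p(\alpha_0)=p(\beta_0)=\sigma_0$ so that the term $\sigma_0\int u_x$ also has no net boundary contribution.
\end{itemize}
This yields
\[
\sup_t\int\Big(\tfrac12u^2+W(v)-W(\mathcal V)+\sigma_0(v-\mathcal V)\Big)+\int_0^t\!\!\int\frac{u_x^2}{v}\le C\widetilde C_0 .
\]
The linear term $\sigma_0\int(v-\mathcal V)$ is handled through the mass identity. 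Its time-independent part is exactly the PV integral in \eqref{eq:initial_small}, which is why that quantity appears there.

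\textbf{Step 2 (piecewise gradient).} Use $\Phi=\ln v$, so that $(\Phi_x)_t=u_{xx}/v+\dots$, and the identity $(u_x/v)_x=(\Phi_x)_t$. Rewrite the momentum equation as $(\Phi_x)_t=u_t+p'(v)v_x$ on each piece. Multiply by $\Phi_x$ and integrate over each piece. Since $-p'v\ge c>0$ in the stable neighbourhoods, this gives $\sup\|v_x\|_{\mathrm{pw},L^2}^2+\int_0^t\|v_x\|_{\mathrm{pw}}^2\le C\widetilde C_0+C\int_0^t\|u_x\|^2$. The cross term $\int u_t\Phi_x$ is handled as usual by writing it as $\frac{d}{dt}\int u\Phi_x$ minus $\int u(\Phi_x)_t$. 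The resulting boundary contributions at $\pm L$ are $u\,\llbracket\Phi\rrbracket_t$ type terms. These vanish at leading order because $\llbracket\Phi\rrbracket_t=\llbracket p\rrbracket$ and $\llbracket p\rrbracket$ is quadratically small near the Maxwell pair.

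\textbf{Step 3 ($D$ and $G$).} For $D$ (weight $\omega\equiv1$), multiply the momentum equation by $u_t$. This gives $\sup\|u_x\|^2+\int\|u_t\|^2$. The pieces $\|(u_x/v)_x\|_{\mathrm{pw}}$ then follow from the equation $u_t+p_x=(u_x/v)_x$ together with Step 2. The $L^4$ term $\int\|u_x^2\|^2$ comes from the piecewise Gagliardo--Nirenberg inequality applied to $F=u_x/v-(p-\sigma_0)$, which is continuous across $\pm L$. For $G$, differentiate the momentum equation in $t$ and test with $u_t$. The nonlinear term is bounded by $\|u_x\|_{L^\infty}\|u_t\|\,\|u_{xt}\|$, which closes using $\|u_x\|_{L^\infty}^2\le C\|F\|\|F_x\|+\dots$.

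\textbf{Main obstacle.} The hardest part is controlling the interface terms at $\pm L$ uniformly in time, together with the linear mass term in Step 1. Cancellation requires that the jump stays exactly a Maxwell jump up to quadratic error. I would use the jump ODE \eqref{eq:jump-alpha2}: it shows that $\lambda_\pm=\llbracket p\rrbracket/\llbracket\Phi\rrbracket$ is $O(|v-\mathcal V|_{L^\infty})$. These traces are in turn bounded by $\|v-\mathcal V\|^{1/2}\|v_x\|_{\mathrm{pw}}^{1/2}$, which makes every boundary term cubic in the energy and therefore absorbable.
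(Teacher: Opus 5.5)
Your Steps 1 and 3 follow the paper's argument. Step 1 is the relative-entropy estimate using $p(\mathcal V)\equiv\sigma_0$. Step 3 uses the $u_t$-multiplier and the time-differentiated momentum equation, and there every interface term cancels \emph{exactly} because $u$, $u_t$ and $p-u_x/v$ are continuous across $\pm L$. Routing the $L^\infty$ bound for $u_x$ through the continuous flux $F=u_x/v-(p-\sigma_0)$ is, if anything, more careful than the paper's whole-line Sobolev bound for the discontinuous $u_x$.

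The gap is in Step 2. Writing $\int_{I_i}u_t\Phi_x=\frac{d}{dt}\int_{I_i}u\Phi_x-\int_{I_i}u\,(u_x/v)_x$ and integrating by parts on each piece leaves the interface contribution $u(L,t)\llbracket p\rrbracket(L,t)+u(-L,t)\llbracket p\rrbracket(-L,t)$, using $\llbracket u_x/v\rrbracket=\llbracket p\rrbracket$. This term is not small in the way you claim. Because $p(\alpha_0)=p(\beta_0)=\sigma_0$,
\[
\llbracket p\rrbracket(L)=p'(v_\infty)\bigl(v(L^+)-v_\infty\bigr)-p'(v_{\mathrm{alt}})\bigl(v(L^-)-v_{\mathrm{alt}}\bigr)+O\bigl(|v-\mathcal V|_{L^\infty}^2\bigr),
\]
which is \emph{linear} in the trace deviations, not quadratic. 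Your own remark that $\lambda_\pm=O(|v-\mathcal V|_{L^\infty})$ with $\llbracket\Phi\rrbracket=O(1)$ says the same. Hence $u\llbracket p\rrbracket$ is only quadratic in the perturbation, of the same order as the energy, not cubic.

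More importantly, its time integral is not controlled uniformly in $t_0$. The functionals $A$, $D$, $G$ place the traces $u(\pm L,\cdot)$ and $(v-\mathcal V)(\pm L^\pm,\cdot)$ only in $L^4_t\cap L^\infty_t$, via $|u|_{L^\infty}^2\le\|u\|\,\|u_x\|$ and its analogue. On the bounded piece $I_*$, the trace bound $\|v-\mathcal V\|^{1/2}\|v_x\|_{\mathrm{pw},L^2}^{1/2}$ even needs an extra non-decaying term $C\|v-v_{\mathrm{alt}}\|_{L^2(I_*)}$. So the product lies in $L^2_t$ but not $L^1_t$, and $\int_0^{t_0}|u\llbracket p\rrbracket|\,\mathrm{d}t$ is bounded only with a factor $t_0^{1/2}$. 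The claimed bound $\sup\|v_x\|_{\mathrm{pw},L^2}^2+\int_0^t\|v_x\|_{\mathrm{pw},L^2}^2\le C\widetilde C_0+C\int_0^t\|u_x\|^2$ therefore does not follow, and the jump ODE does not repair it.

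The remedy, which is what the paper does, is not to create this term at all.
\begin{enumerate}
\item Bound $\int_{I_i}\frac{v_x}{v}u_t$ directly by Cauchy--Schwarz, absorbing half into $\int(-vp')(v_x/v)^2$. This gives $\sup\|v_x\|_{\mathrm{pw},L^2}^2+\int\|v_x\|_{\mathrm{pw},L^2}^2\le C(\widetilde C_0+\int\|u_t\|^2)$, as in \eqref{v-x3}.
\item Take $\int\|u_t\|^2$ from the $u_t$-multiplier estimate, which needs only Step 1 plus $\iint|u_x|^3$ and produces no interface terms.
\item Since $\iint|u_x|^3$ in turn needs $\int\|(u_x/v)_x\|_{\mathrm{pw},L^2}^2\le C\int(\|u_t\|^2+\|v_x\|_{\mathrm{pw},L^2}^2)$, close Steps 2 and 3 jointly, as in \eqref{u-x2}--\eqref{u-x3}.
\end{enumerate}
With this ordering the obstacle you describe disappears. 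No interface term survives in any estimate, the Maxwell property enters only through $p(\mathcal V)\equiv\sigma_0$, and the jump ODE is not needed for this lemma.
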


\vspace{3mm}

\begin{proof}
Throughout this proof, $C$ denotes a generic positive constant determined solely by the properties of $p$ and $\theta$ over the neighborhoods $U(v_{\infty},\delta)$ and $U(v_{\mathrm{alt}},\delta)$, and is independent of $t_0$. We first verify the invariance of the principal-value integral:
\begin{equation}\label{PV-v-V}
\pv\int_{-\infty}^{+\infty} \bigl(v(x,t)-\mathcal{V}\bigr)\,\mathrm{d}x
= \pv\int_{-\infty}^{+\infty} \bigl(v_0(x)-\mathcal{V}\bigr)\,\mathrm{d}x.
\end{equation}
To establish this identity, we adopt the weak formulation of the first equation in \eqref{ns-lagrange} with a time-independent test function $\boldsymbol{\phi}=\boldsymbol{\phi}(x)$:
\[
\left.\int_{-\infty}^{+\infty} \bigl(v(x,\cdot)-\mathcal{V}\bigr)\boldsymbol{\phi}(x)\,\mathrm{d}x\right|_{0}^{t}
= -\int_0^t\int_{-\infty}^{+\infty} u(x,s)\,\partial_x\boldsymbol{\phi}(x)\,\mathrm{d}x\,\mathrm{d}s.
\]
We choose the piecewise smooth cutoff function
\[
\boldsymbol{\phi}(x)=
\begin{cases}
1, & |x| \le R - h, \\[4pt]
\dfrac{R - |x|}{h}, & R - h \le |x| \le R, \\[4pt]
0, & |x| \ge R,
\end{cases}
\]
and take the limit $h\to0$. This procedure yields
\[
\int_{-R}^{R} \bigl(v(x,t)-\mathcal{V}\bigr)\,\mathrm{d}x
= \int_{-R}^{R} \bigl(v_0(x)-\mathcal{V}\bigr)\,\mathrm{d}x
+\left.\int_0^t u(x,s)\,\mathrm{d}s\right|_{x=-R}^{x=R}.
\]
Letting $R\to\infty$ and invoking the prescribed decay bounds for $A$ and $D$, we obtain the identity \eqref{PV-v-V}. By a similar argument, we derive the following energy-balance relation:
\begin{equation}\label{velocity-L2}
\frac12\int_{-\infty}^{+\infty} u^2(x,t)\,\mathrm{d}x \leq C_0,
\end{equation}
where $C_0$ is a time-independent constant determined by the initial data.

Next, we employ the relations \eqref{ns-lagrange} and \eqref{s-def} to compute the time derivative of the potential function $W_{v_\infty}$:
\begin{align*}
\frac{\partial W_{v_\infty}}{\partial t}
&= -p(v)v_t = -p(v)u_x = \partial_x\bigl(-p(v)u\bigr) + p'(v) v_x u \\
&= \partial_x\bigl(-p(v)u\bigr) + \Bigl(-u_t + \partial_x\biggl(\frac{u_x}{v}\biggr)\Bigr)u \\
&= -\partial_x\left(\left(p-\frac{u_x}{v}\right)u\right) - \partial_t\left(\frac{u^2}{2}\right) - \frac{u_x^2}{v}.
\end{align*}
Integrating the above identity over $[0,t]\times(I_{-\infty}\cup I_{+\infty})$, where the domains $I_{-\infty}$ and $I_{+\infty}$ are defined in \eqref{Phase-transition-solution}, we get
\begin{align*}
\left.\int_{I_{-\infty}\cup I_{+\infty}} W_{v_\infty}(x,s)\,\mathrm{d}x\right|_{0}^{t}
&= -\left.\int_{I_{-\infty}\cup I_{+\infty}} \frac{u^2}{2}\,\mathrm{d}x\right|_{0}^{t}
- \int_0^t \int_{I_{-\infty}\cup I_{+\infty}}\frac{u_x^2}{v}\,\mathrm{d}x\,\mathrm{d}s \\
&\quad - \int_0^t \left(p-\frac{u_x}{v}\right)u\bigl((-L)^-,s\bigr)\mathrm{d}s
+ \int_0^t \left(p-\frac{u_x}{v}\right)u\bigl(L^+,s\bigr)\mathrm{d}s.
\end{align*}
An identical calculation holds for $W_{v_{\mathrm{alt}}}$, which gives
\begin{align*}
\left.\int_{I_*} W_{v_{\mathrm{alt}}}(x,s)\,\mathrm{d}x\right|_{0}^{t}
&= -\left.\int_{I_*} \frac{u^2}{2}\,\mathrm{d}x\right|_{0}^{t}
- \int_0^t \int_{I_*}\frac{u_x^2}{v}\,\mathrm{d}x\,\mathrm{d}s \\
&\quad - \int_0^t \left(p-\frac{u_x}{v}\right)u\bigl(L^-,s\bigr)\mathrm{d}s
+ \int_0^t \left(p-\frac{u_x}{v}\right)u\bigl((-L)^+,s\bigr)\mathrm{d}s.
\end{align*}
Adding the two identities and applying the jump condition \eqref{jump0} to eliminate the boundary jump terms, we arrive at
\begin{equation}\label{slr1}
\begin{aligned}
&\left.\int_{I_{-\infty}\cup I_{+\infty}} W_{v_\infty}(x,\cdot) \,\mathrm{d}x\right|_0^{t}+\left.\int_{I_*} W_{v_{\mathrm{alt}}}(x,\cdot) \,\mathrm{d}x\right|_0^{t}
+\left.\int_{-\infty}^{+\infty}\frac{u^2}{2} \,\mathrm{d}x\right|_0^{t}+\int_0^t \int_{-\infty}^{+\infty} \frac{u_{x}^2}{v} \,\mathrm{d}x\,\mathrm{d}s=0.
\end{aligned}
\end{equation}
On the other hand, it follows from \eqref{s-def} that $W_{v_\infty}$ satisfies the pointwise lower bound for all $v\in U(v_\infty)$:
\[
W_{v_\infty}(v)\geq -p(v_\infty)(v-v_\infty)+C^{-1}(v-v_\infty)^2.
\]
Similarly, the corresponding quadratic lower bound holds for $W_{v_{\mathrm{alt}}}$ whenever $v\in U(v_{\mathrm{alt}})$:
\[
W_{v_{\mathrm{alt}}}(v)\geq -p(v_{\mathrm{alt}})(v-v_{\mathrm{alt}})+C^{-1}(v-v_{\mathrm{alt}})^2.
\]
Combining these lower bounds with the domain decomposition in \eqref{Phase-transition-solution}, we derive
\begin{equation}\label{eq:v-l2}
\begin{aligned}
& \int_{I_{-\infty}\cup I_{+\infty}}C^{-1}(v-v_\infty)^2\mathrm{d}x+\int_{I_*}C^{-1}(v-v_{\mathrm{alt}})^2\mathrm{d}x\\
&\leq p(\mathcal{V})\pv\int_{-\infty}^{+\infty}(v-\mathcal{V})\mathrm{d}x+\int_{I_{-\infty}\cup I_{+\infty}} W_{v_\infty}(x, t) \mathrm{d}x+\int_{I_*} W_{v_{\mathrm{alt}}}(x,t) \mathrm{d}x.
\end{aligned}
\end{equation}
Substituting the estimates \eqref{C0}, \eqref{eq:vvv}, the invariant integral identity \eqref{PV-v-V}, and the relation \eqref{slr1} into the above inequality, we obtain
\begin{equation}\label{slr2}
\begin{aligned}
\int_{-\infty}^{+\infty}\left(C^{-1}(v-\mathcal{V})^2+\frac{u^2}{2}\right)\mathrm{d} x+\int_0^t \int_{-\infty}^{+\infty} \frac{u_{x}^2}{v} \,\mathrm{d}x \,\mathrm{d}s \leq CC_0.
\end{aligned}
\end{equation}
This further implies the uniform bound
\begin{equation}\label{slr3}
\int_{-\infty}^{+\infty}\bigl((v-\mathcal{V})^2+u^2\bigr)\,\mathrm{d}x+\int_0^{t_0} \int_{-\infty}^{+\infty} u_{x}^2  \,\mathrm{d}x\,\mathrm{d}t\leq CC_0.
\end{equation}

We next estimate the term $\|v_{x}\|_{\mathrm{pw},L^2}^2$ arising in the energy functional \eqref{eq:energy-norms}. Using the system \eqref{ns-lagrange}, we compute
\begin{equation}\label{v-x1}
\biggl(\frac{v_x}{v}\biggr)_t=\bigl(\ln v\bigr)_{xt}=\biggl(\frac{v_t}{v}\biggr)_x=\biggl(\frac{u_x}{v}\biggr)_x=u_t+\partial_x p(v)=u_t-\bigl(-v p'(v)\frac{v_x}{v}\bigr),
\end{equation}
where the coefficient $-vp'(v)$ is a smooth function of $v$. Moreover, away from the interface $x=\pm L$, this coefficient satisfies the uniform lower bound
\begin{equation}\label{v-x2}
 -v\,p'(v) \geq C^{-1}.
\end{equation}
Substituting \eqref{v-x2} into \eqref{v-x1}, multiplying the resulting equality by $\dfrac{v_x}{v}$, and integrating over $x$ yields the energy-dissipation identity
\begin{align*}
&\frac{1}{2}\frac{d}{\mathrm{d}t} \|\frac{v_x}{v}\|^2_{\mathrm{pw},L^2}+\bigl\|\sqrt{-vp'(v)}\,\frac{v_x}{v}\bigr\|^2_{\mathrm{pw},L^2}\\
&=\int_{-\infty}^{-L} \frac{v_x}{v} u_t(x,t) \,\mathrm{d}x
+ \int_{-L}^{L}\frac{v_x}{v} u_t(x,t) \,\mathrm{d}x
+ \int_{L}^{+\infty} \frac{v_x}{v} u_t(x,t) \,\mathrm{d}x.
\end{align*}
Applying the Cauchy-Schwarz inequality and integrating in time, we deduce
\begin{equation}\label{v-x3}
\begin{aligned}
\sup_{0 \leq t \leq t_0}\|v_x\|^2_{\mathrm{pw},L^2}+ \int_0^{t_0}\|v_x\|^2_{\mathrm{pw},L^2}\,\mathrm{d}t\leq C\left(C_0 +\int_0^{t_0} \|u_t\|^2_{\mathrm{pw},L^2} \,\mathrm{d}t\right).
\end{aligned}
\end{equation}
We now establish higher-order estimates for $\|u_{x}(\cdot, t)\|$. Multiplying the second equation in \eqref{ns-lagrange} by $u_t$ and integrating over $\mathbb{R}$, we obtain
\begin{align*}
\int_{-\infty}^{+\infty} u_t^2 \,\mathrm{d}x
&=\int_{I_{-\infty}\cup I_*\cup I_{+\infty}}\left(-\bigl(p-p(\mathcal{V})\bigr)_x+\left(\frac{u_x}{v}\right)_x\right) u_{t} \,\mathrm{d}x\\
&=\left\llbracket\Bigl(p-\frac{u_x}{v}\Bigr)u_t-p(\mathcal{V}) u_{t}\right\rrbracket(L,t)+\left\llbracket\Bigl(p-\frac{u_x}{v}\Bigr)u_t-p(\mathcal{V}) u_{t}\right\rrbracket(-L,t)\\
&\qquad+\int_{-\infty}^{+\infty} u_{xt}\left(p-p(\mathcal{V})-\frac{u_x}{v}\right) \mathrm{d}x\\
&=\int_{-\infty}^{+\infty} u_{xt}\left(p-p(\mathcal{V})-\frac{u_x}{v}\right) \mathrm{d}x,
\end{align*}
where the jump terms vanish by \eqref{jump0} and the continuity $\llbracket u_t\rrbracket=0$ (following from $\llbracket u\rrbracket=0$ at the fixed interface $s=0$). Integrating by parts with respect to time and rearranging terms, we arrive at
\begin{align*}
&\int_{-\infty}^{+\infty} \frac{u_x^2}{2 v}\bigg|_0^t \mathrm{d}x+\int_0^t \int_{-\infty}^{+\infty} u_t^2 \,\mathrm{d}x\,\mathrm{d}s\\
&=\int_{-\infty}^{+\infty} u_x\bigl(p-p(\mathcal{V})\bigr)\bigg|_0^t \mathrm{d}x-\int_0^t \int_{-\infty}^{+\infty} u_x p_t \,\mathrm{d}x\,\mathrm{d}s-\int_0^t \int_{-\infty}^{+\infty} \frac{u_x^2v_t}{2 v^2}  \,\mathrm{d}x\,\mathrm{d}s.
\end{align*}
Combining this identity with the preliminary estimate \eqref{slr3}, we derive
\begin{equation}\label{u-x1}
\begin{aligned}
\sup_{0 \leq t \leq t_0}\|u_x\|^2+ \int_0^{t_0}\int_{-\infty}^{+\infty} u_t^2\,\mathrm{d}x\mathrm{d}t\leq C\left[C_0 +\int_0^{t_0} \int_{-\infty}^{+\infty}|u_x|^3\,\mathrm{d}x \mathrm{d}t\right].
\end{aligned}
\end{equation}
Meanwhile, it follows from the second equation of \eqref{ns-lagrange}, together with \eqref{v-x3} and \eqref{u-x1}, that
\begin{align*}
\int_0^{t_0}\left\|\big(\frac{u_x}{v}\big)_x\right\|^2_{\mathrm{pw},L^2} \mathrm{d}t &\leq C \int_0^{t_0}\big(\|u_t\|^2_{\mathrm{pw},L^2}+\|v_x\|^2_{\mathrm{pw},L^2}\big) \mathrm{d}t\\
&\leq C\left[C_0+\int_0^{t_0}\|u_t\|^2 \mathrm{d}t\right].
\end{align*}
This yields the composite bound
\begin{equation}\label{u-x2}
\begin{aligned}
&\sup_{0 \leq t \leq t_0} \|u_x\|^2 +\int_0^{t_0} \|u_t\|^2 \mathrm{d}t+\int_0^{t_0}\left\|\big(\frac{u_x}{v}\big)_x\right\|^2_{\mathrm{pw},L^2} \mathrm{d}t\\
&\leq C\left[C_0+\int_0^{t_0} \int_{-\infty}^{+\infty}\left(|u_x|^3+u_t^2\right) \mathrm{d}x \mathrm{d}t\right].
\end{aligned}
\end{equation}
By the standard Sobolev embedding inequality and the uniform bounds on $v$, we have
\[
\left\|u_x(\cdot, t)\right\|_{L^\infty} \leq C\|u_x\|^{1/2}\left\|\big(\frac{u_x}{v}\big)_x\right\|_{\mathrm{pw},L^2}^{1/2},
\]
which further implies
\begin{equation}\label{u-x3}
\begin{aligned}
\int_{0}^{t_{0}} \int_{-\infty}^{+\infty}\left|u_{x}\right|^{3} \mathrm{d}x \mathrm{d}t &\leq C \int_{0}^{t_{0}}\|u_x\|^{5/2}\left\|\big(\frac{u_x}{v}\big)_x\right\|_{\mathrm{pw},L^2}^{1/2} \mathrm{d}t \\
&\leq C \sup_{0 \leq t \leq t_{0}}\|u_{x}\|\left(\int_{0}^{t_{0}} \|u_{x}\|^{2}\mathrm{d}t\right)^{3/4}\left(\int_{0}^{t_{0}} \left\|\big(\frac{u_x}{v}\big)_x\right\|_{\mathrm{pw},L^2}^2 \mathrm{d}t\right)^{1/4} \\
&\leq C C_{0}^{3/4} \Big(\sup_{0 \leq t \leq t_0} \|u_x\|^2 +\int_0^{t_0} \|u_t\|^2 \mathrm{d}t+\int_0^{t_0}\left\|\big(\frac{u_x}{v}\big)_x\right\|^2_{\mathrm{pw},L^2} \mathrm{d}t\Big)^{3/4}.
\end{aligned}
\end{equation}
Combining \eqref{u-x2}, \eqref{u-x3}, and \eqref{v-x3}, we conclude that for sufficiently small $C_0$,
\begin{equation}
\sup_{0 \leq t \leq t_0} \|u_x\|^2 +\int_0^{t_0} \|u_t\|^2 \mathrm{d}t+\int_0^{t_0}\left\|\big(\frac{u_x}{v}\big)_x\right\|^2_{\mathrm{pw},L^2} \mathrm{d}t\leq CC_0,\quad 0\leq t\leq t_0.
\end{equation}
The above estimates eventually imply the uniform boundedness of the key evolutionary quantities:
\begin{equation}\label{ad-1}
A(t)+D(t)\leq CC_0,\quad 0\leq t\leq t_0.
\end{equation}

Finally, we establish the uniform estimate for the functional $G(t)$. From the second equation of the system \eqref{ns-lagrange}, we differentiate in time to obtain
\[
u_{tt}=\biggl(\frac{u_{x}}{v}\biggr)_{xt}-\partial_{xt}p(v).
\]
Multiplying both sides by $u_t$ and integrating over $\mathbb{R}$ yields
\[
\frac12\frac{d}{\mathrm{d}t}\int_{-\infty}^{+\infty} u_t^2 \,\mathrm{d}x=\int_{-\infty}^{+\infty} u_{xt}\left(p'(v)v_t-\biggl(\frac{u_x}{v}\biggr)_t\right) \,\mathrm{d}x,
\]
where the interface jump terms vanish by \eqref{jump0} and the continuity $\llbracket u_t\rrbracket=0$ (following from $\llbracket u\rrbracket=0$ at the fixed interface $s=0$). Integrating by parts with respect to time and rearranging terms, we derive the key identity
\begin{equation}\label{u-t1}
 \begin{aligned}
&\frac12\left.\int_{-\infty}^{+\infty} u_t^2\,\mathrm{d}x\right|_0^{t}+\int_0^t \int_{-\infty}^{+\infty} \frac{u_{xt}^2}{v} \,\mathrm{d}x \,\mathrm{d}s\\
&=\int_0^t \int_{-\infty}^{+\infty} u_{xt} p'(v)v_t \,\mathrm{d}x \,\mathrm{d}s+\int_0^t \int_{-\infty}^{+\infty} \frac{u_x^2u_{xt}}{v^2} \,\mathrm{d}x \,\mathrm{d}s.
\end{aligned} 
\end{equation}
Combining this identity with the bound \eqref{ad-1} established above, we deduce
\begin{equation}\label{u-t2}
 \begin{aligned}
&\sup_{0\leq t\leq t_0}\int_{-\infty}^{+\infty} u_t^2\,\mathrm{d}x+\int_0^{t_0} \int_{-\infty}^{+\infty} u_{xt}^2 \,\mathrm{d}x \,\mathrm{d}t\\
&\leq C\int_0^{t_0} \int_{-\infty}^{+\infty} v_t^2 \,\mathrm{d}x \,\mathrm{d}s+C\int_0^{t_0} \int_{-\infty}^{+\infty} u_x^4 \,\mathrm{d}x \,\mathrm{d}t\\
&\leq C\int_0^{t_0} \int_{-\infty}^{+\infty} u_x^2 \,\mathrm{d}x \,\mathrm{d}s+C\int_0^{t_0}\|u_x\|^2\|u_x\|_{L^\infty}^2 \,\mathrm{d}t\\
&\leq CC_0+C\sup_{0 \leq t \leq t_0}\|u_x\|^2\int_0^{t_0}\|u_x\|\biggl\|\biggl(\frac{u_x}{v}\biggr)_x\biggr\|_{\mathrm{pw},L^2} \,\mathrm{d}t\\
&\leq CC_0.
\end{aligned} 
\end{equation}
Meanwhile, another bound for the piecewise Sobolev norm follows directly from the second equation in \eqref{ns-lagrange}:
\begin{equation}\label{u-t3}
\begin{aligned}
\biggl\|\biggl(\frac{u_x}{v}\biggr)_x\biggr\|_{\mathrm{pw},L^2}^2
\leq C\bigl(\|u_t\|^2_{\mathrm{pw},L^2}+\|v_x\|^2_{\mathrm{pw},L^2}\bigr)\leq C\left(\int_{-\infty}^{+\infty} u_t^2\,\mathrm{d}x+C_0\right).
\end{aligned}
\end{equation}
Combining the estimates \eqref{u-t2} and \eqref{u-t3}, we obtain the uniform boundedness of the functional $G(t)$:
\begin{equation}\label{ad-2}
  G(t)\leq CC_0,\quad 0\leq t\leq t_0.
\end{equation}
This completes the proof of Lemma \ref{lem-apriori}.

\end{proof}
\vspace{3mm}

We now finish proving Theorem \ref{thm:large-time-behavior} using the local-existence result in Proposition \ref{prop:local-existence} and the a-priori estimates in Lemma \ref{lem-apriori}.
\vspace{3mm}

\noindent\textbf{Proof of Theorem \ref{thm:large-time-behavior}.}~
First, we set \(M_{0} = 1\) in Proposition \ref{prop:local-existence}. Let \(t_0>0\) and \(C_1\) denote the local existence time and the constant returned by Proposition \ref{prop:local-existence}, and set \(\tau_1 = t_0/2\), so that each application of Proposition \ref{prop:local-existence} yields a solution on an interval of length \(2\tau_1\). Applying Proposition \ref{prop:local-existence}, we obtain a solution \((v,u)\) defined on \( [0,2\tau_1] \) that satisfies
\begin{align}
&v\in [b+\underline{v},\bar{v}] ,\quad 0\leq t\leq 2\tau_1, \label{eq:3.1}\\
&A(t) + B(t) + F(t) \leq C_1 C_0 ,\quad 0 \leq t \leq 2\tau_1. \label{eq:3.2}
\end{align}
Without loss of generality, we assume \(\tau_1 \leq 1\) and \(C_1 \geq 1\).
From \eqref{eq:3.2}, we deduce
\begin{equation}\label{eq:3.3}
\|v-\mathcal{V},u, u_x\|^2 + \|v_x\|_{\text{pw},L^2}^2\leq C_1 \tau_1^{-1} C_0,\quad 0\leq t\leq 2\tau_1,
\end{equation}
which implies that, for sufficiently small \(\delta_0\),
\begin{equation}\label{eq:3.4}
v \in 
\begin{cases}
U(v_{\infty},\delta), & x \in I_{+\infty} \cup I_{-\infty}, \\
U(v_{\mathrm{alt}},\delta), & x \in I_*,
\end{cases}
\quad \tau_1\leq t\leq 2\tau_1.
\end{equation}
We next estimate the quantity \(E_0\) appearing in \eqref{eq:initial_small} by resetting the initial time to \(t=\tau_1\). It follows from \eqref{eq:3.2} that
\[
\Big\|\left(\frac{u_x}{v}\right)_x(\cdot, t)\Big\|_{\text{pw},L^2}^2
\leq \tau_1^{-3} F(\tau_1) \leq C_1 \tau_1^{-3} C_0.
\]
Combining this bound with \eqref{eq:3.3}, we arrive at
\begin{equation}\label{eq:3.5}
\big\|(v-\mathcal{V},u,u_x)(\cdot,t)\big\|^2
+\Big\|\big(v_x,\big(\frac{u_x}{v}\big)_x\big)(\cdot, t)\Big\|_{\text{pw},L^2}^2
\leq C_3 \tau_1^{-3} C_0,
\end{equation}
at time \(t=\tau_1\), where \(C_3\geq C_1\) is a constant multiple of \(C_1\).
Now define \(\tau_k = k\tau_1\) for \(k=1,2,\cdots\). We apply Proposition \ref{prop:local-existence} with initial time \(t=\tau_1\) to extend the solution up to time \(\tau_3\), and the extended solution satisfies
\begin{equation}\label{eq:3.6}
A_1(\tau_3) + B_1(\tau_3) + F_1(\tau_3) \leq C_1 C_2 C_3 \tau_1^{-3} C_0.
\end{equation}
Here, \(A_1,B_1,F_1\) denote the counterparts of \(A,B,F\) with initial time shifted from \(t=0\) to \(t=\tau_1\).
Inequality \eqref{eq:3.6} yields
\[
\big\|(v-\mathcal{V},u,u_x)(\cdot,t)\big\|^2
+\Big\|\big(v_x,\big(\frac{u_x}{v}\big)_x\big)(\cdot, t)\Big\|_{\text{pw},L^2}^2
\leq C_1 C_2 C_3 \tau_1^{-9/2} C_0,\quad \tau_1\leq t\leq \tau_3,
\]
so that for sufficiently small \(\delta_0\),
\begin{equation}\label{eq:3.4-1}
v \in 
\begin{cases}
U(v_{\infty},\delta), & x \in I_{+\infty} \cup I_{-\infty}, \\
U(v_{\mathrm{alt}},\delta), & x \in I_*,
\end{cases}
\quad \tau_2\leq t\leq 2\tau_3.
\end{equation}
Using the bound on \(E_0\) in \eqref{eq:3.5}, we apply Lemma \ref{lem-apriori} to obtain
\begin{equation}\label{eq:3.7}
A_1 + D_1 + G_1 \leq C_2 C_3 \tau_1^{-3} C_0,\quad \tau_1 \leq t \leq \tau_3,
\end{equation}
where \(A_1,D_1,G_1\) are the modified quantities corresponding to \(A,D,G\) with initial time \(t=\tau_1\).
As a consequence of \eqref{eq:3.7}, we have
\begin{equation}\label{eq:3.8}
\big\|(v-\mathcal{V},u,u_x)(\cdot,t)\big\|^2
+\Big\|\big(v_x,\big(\frac{u_x}{v}\big)_x\big)(\cdot, t)\Big\|_{\text{pw},L^2}^2
\leq C_2 C_3 \tau_1^{-3} C_0,\quad \tau_1\leq t\leq \tau_3.
\end{equation}
We may therefore take \(t=\tau_2\) as a new initial time. By induction, we establish the global-in-time existence of the solution \((v,u)\) and conclude that the solution satisfies the estimate \eqref{eq:e0-b1} with constant \(C = C_1 + C_2 C_3 \tau_1^{-3}\).

It remains only to verify the large-time behavior \eqref{large-time}.  Define
\[
f(t) =\int_{-\infty}^{+\infty} u_x^2(x,t) \,\mathrm{d}x.
\]
By \eqref{eq:e0-b1}, the total variation of \(f(t)\) over \([1,\infty)\) satisfies
\[
\operatorname{Var}f\,\big|_1^{\infty} 
= \int_1^\infty |f'(t)|\,\mathrm{d}t
\leq 2\int_1^\infty\int_{-\infty}^{+\infty} |u_x u_{xt}| \,\mathrm{d}x\mathrm{d}t
\leq 2C\delta_0.
\]
This ensures the existence of \(\lim_{t\to\infty} f(t)\). Since \eqref{eq:e0-b1} also implies that \(f\in L^1([1,\infty))\), this limiting value must be zero. Consequently, we obtain
\begin{equation}\label{eq:large-u}
\|u(\cdot,t)\|_{L^\infty}^2 
\leq 2\|u(\cdot,t)\|_{L^2}\|u_x(\cdot,t)\|_{L^2}
\leq 2A(t)^{1/2}f(t)^{1/2}
\to 0 \quad \text{as } t\to\infty.
\end{equation}
The corresponding analysis for \(v\) is more delicate. We introduce
\[
g_i(t)=\frac12\int_{I_i}\left(\frac{v_x}{v}\right)^2\mathrm{d}x,
\]
where \(I_1=I_{-\infty}\), \(I_2=I_*\), and \(I_3=I_{+\infty}\). From the Lagrangian identity \eqref{ns-lagrange}, we compute
\[
\left(\frac{v_x}{v}\right)_t
=\left(\frac{v_t}{v}\right)_x
=\left(\frac{u_x}{v}\right)_x
=u_t+p_x(v).
\]
Differentiating \(g_i(t)\) and substituting the above relation yields
\[
g_i'(t)
= \int_{I_i}\frac{v_x}{v}\left(\frac{v_x}{v}\right)_t\mathrm{d}x
= \int_{I_i}\frac{v_x}{v}\big(u_t + p(v)_x\big)\mathrm{d}x
= \int_{I_i}\frac{v_x}{v}u_t\mathrm{d}x
+ \int_{I_i} \frac{\gamma(v)}{2}\left(\frac{v_x}{v}\right)^2\mathrm{d}x,
\]
where \(\gamma(v)=2vp'(v)<0\) for all \(v\in U(v_\infty,\delta)\cup U(v_{\mathrm{alt}},\delta)\). By the integral mean value theorem, there exists a function \(\tilde{\gamma}(t)\) such that
\[
\int_{I_i} \frac{\gamma(v)}{2}\left(\frac{v_x}{v}\right)^2\mathrm{d}x
=\tilde{\gamma}(t)\int_{I_i} \frac12\left(\frac{v_x}{v}\right)^2\mathrm{d}x
=\tilde{\gamma}(t)g_i(t),
\]
with \(\tilde{\gamma}(t)\leq -1/C\) for all \(t\geq 1\), where \(C\) denotes a generic positive constant. We thus obtain the differential inequality
\[
g_i'(t) = \tilde{\gamma}(t) g_i(t) + \int_{I_i}\frac{v_x}{v}u_t\,\mathrm{d}x.
\]
Solving this linear ordinary differential equation gives
\begin{align*}
g_i(t)
&= \exp\left(\int_{1}^{t}\tilde{\gamma}(s)\mathrm{d}s\right)g_i(1)
+ \int_{1}^{t}\int_{I_i}
\exp\left(\int_{s}^{t}\tilde{\gamma}(\tau)\mathrm{d}\tau\right)
\frac{v_x}{v}u_t \,\mathrm{d}x\mathrm{d}s\\
&\triangleq K_1 + K_2.
\end{align*}
For the first term, the uniform negative upper bound of \(\tilde{\gamma}\) yields
\[
K_1 \leq C e^{(1-t)/C} \to 0 \quad \text{as } t\to\infty.
\]
We decompose \(K_2\) into two integrals over \(s\in[1,t/2]\) and \(s\in[t/2,t]\), respectively. Applying the Cauchy--Schwarz inequality and the global bound \eqref{eq:e0-b1}, we deduce
\begin{align*}
K_2
&= \int_{1}^{t/2}\int_{I_i}
\exp\left(\int_{s}^{t}\tilde{\gamma}(\tau)\mathrm{d}\tau\right)
\frac{v_x}{v}u_t \,\mathrm{d}x\mathrm{d}s
+ \int_{t/2}^{t}\int_{I_i}
\exp\left(\int_{s}^{t}\tilde{\gamma}(\tau)\mathrm{d}\tau\right)
\frac{v_x}{v}u_t \,\mathrm{d}x\mathrm{d}s\\
&\leq C\left(\int_{1}^{t/2}\int_{I_i}e^{(s-t)/C}v_x^2\mathrm{d}x\mathrm{d}s\right)^{1/2}
\left(\int_{1}^{t/2}\int_{I_i}u_t^2\mathrm{d}x\mathrm{d}s\right)^{1/2}\\
&\quad + C\left(\int_{t/2}^{t}\int_{I_i}e^{(s-t)/C}v_x^2\mathrm{d}x\mathrm{d}s\right)^{1/2}
\left(\int_{t/2}^{t}\int_{I_i}u_t^2\mathrm{d}x\mathrm{d}s\right)^{1/2}\\
&\leq C\sup_{t}\left(\int_{I_i}v_x^2\mathrm{d}x\right)^{1/2}
\left(\int_{1}^{t/2}e^{(s-t)/C}\mathrm{d}s\right)^{1/2}\\
&\quad + C\sup_{t}\left(\int_{I_i}v_x^2\mathrm{d}x\right)^{1/2}
\left(\int_{t/2}^{+\infty}\int_{I_i}u_t^2\mathrm{d}x\mathrm{d}s\right)^{1/2}\\
&\leq C\left(e^{-t/(2C)}-e^{-(t-1)/C}\right)^{1/2}
+ C\left(\int_{t/2}^{+\infty}\int_{I_i}u_t^2\mathrm{d}x\mathrm{d}s\right)^{1/2}
\to 0 \quad \text{as } t\to\infty.
\end{align*}
Combining the convergence of \(K_1\) and \(K_2\), we conclude
\begin{equation}\label{eq:large-g}
g_i(t) \to 0 \quad \text{as } t\to\infty,\quad i=1,2,3,
\end{equation}
which immediately implies
\begin{equation}\label{v-x}
\lim_{t\to+\infty}\int_{-\infty}^{-L} v_x^2\mathrm{d}x=0,\quad
\lim_{t\to+\infty}\int_{-L}^{L} v_x^2\mathrm{d}x=0,\quad
\lim_{t\to+\infty}\int_{L}^{+\infty} v_x^2\mathrm{d}x=0.
\end{equation}
Using the decay \eqref{v-x} together with the uniform bound \eqref{slr3}, we obtain the decay of \(v\) at far-field domains:
\[
\sup_{x\in(-\infty,-L)} |v(x,t)-v_\infty|
\leq \sqrt{2}\left(\int_{-\infty}^{-L} (v-v_\infty)^2\mathrm{d}x\right)^{\frac14}
\left(\int_{-\infty}^{-L}v_x^2\mathrm{d}x\right)^{\frac14}
\to 0 \quad \text{as } t\to\infty,
\]
and
\[
\sup_{x\in(L,+\infty)} |v(x,t)-v_\infty|
\leq \sqrt{2}\left(\int_{L}^{+\infty} (v-v_\infty)^2\mathrm{d}x\right)^{\frac14}
\left(\int_{L}^{+\infty}v_x^2\mathrm{d}x\right)^{\frac14}
\to 0 \quad \text{as } t\to\infty.
\]
For the interior domain, the limiting level of \(v\) is determined by the jump condition \eqref{jump0} together with the far-field decay established above. First note that \(u_x(\cdot,t)\to 0\) in \(L^\infty(\mathbb R)\) as \(t\to\infty\): by the one-dimensional Sobolev inequality
\[
\|u_x(\cdot,t)\|_{L^\infty}\le \sqrt{2}\,\|u_x(\cdot,t)\|_{L^2}^{1/2}\|u_{xx}(\cdot,t)\|_{L^2}^{1/2},
\]
where \(\|u_x(\cdot,t)\|_{L^2}=f(t)^{1/2}\to0\), and \(\|u_{xx}(\cdot,t)\|_{L^2}\) is uniformly bounded in \(t\), since \(u_{xx}=v(u_x/v)_x+u_xv_x/v\) and the quantities \(\|(u_x/v)_x\|_{L^2}\), \(\|v_x\|_{L^2}\) and \(\|u_x\|_{L^\infty}\) are uniformly bounded. Applying the jump condition \eqref{jump0} (with \(\epsilon=1\)) at the left interface \(x=-L\), we get
\[
p\big(v(-L^-,t)\big)-\frac{u_x}{v}(-L^-,t)
=p\big(v(-L^+,t)\big)-\frac{u_x}{v}(-L^+,t)\to p(v_\infty)=\sigma_0,
\]
because \(v(\cdot,t)\to v_\infty\) in \(L^\infty(I_{-\infty})\) and \(u_x(\cdot,t)\to 0\) in \(L^\infty(\mathbb R)\). Since \(u_x(-L^+,t)\to0\) and \(v(-L^+,t)\in U(v_{\mathrm{alt}},\delta)\) for \(t\ge T_*\), the strict monotonicity of \(p\) on the stable branch forces \(v(-L^+,t)\to v_{\mathrm{alt}}\); the right interface \(x=L\) is treated identically. Finally, the decay of \(v_x\) on \(I_*\) yields
\[
\sup_{y\in(-L,L)}\big|v(y,t)-v(-L^+,t)\big|\le \int_{-L}^{L}|v_x|\,\mathrm{d}x\le C\,g_i(t)^{1/2}\to0,
\]
so that \(v(\cdot,t)\to v_{\mathrm{alt}}\) in \(L^\infty(-L,L)\).
Collecting the far-field and interior convergence results, we arrive at
\begin{equation}\label{eq:large-v}
\|(v-\mathcal{V})(\cdot,t)\|_{L^\infty} \to 0 \quad \text{as } t\to\infty.
\end{equation}
The desired large-time behavior of the solution \((v,u)\) follows from \eqref{eq:large-v} and \eqref{eq:large-u}.
This completes the proof of Theorem \ref{thm:large-time-behavior}. \(\hfill\square\)

\section*{Funding} 
Yazhou Chen acknowledges support from National Natural Sciences Foundation of China (NSFC) (No. 12471207 and 12371434).\\
Yi Peng acknowledges support from National Natural Sciences Foundation of China (NSFC) (No. 12301266).\\
Xiaoding Shi acknowledges support from  Beijing Natural Science Foundation  (No. 1252004).\\
Xiaoping Wang acknowledges support from National Natural Science Foundation of China (NSFC) (No. 12271461), the key project of NSFC (No. 12131010), the University Development Fund from The Chinese University of Hong Kong, Shenzhen (UDF01002028), and Hetao Shenzhen-Hong Kong Science and Technology Innovation Cooperation Zone Project (No. HZQSWS-KCCYB-2024016).

\section*{Ethical approval}
Not applicable

\section*{Data Availability Statement} 
The data that support the findings of this study are available from the corresponding author
upon reasonable request.

\section*{Conflict of Interest}
 The authors declare that they have no conflict of interest.


\begin{thebibliography}{99}
	
	\bibitem{AC1991} M. Affouf, R. E. Caflisch. A numerical study of Riemann problem solutions and stability for a system of viscous conservation laws of mixed type. \textit{SIAM J. Appl. Math.}, 1991, 51: 605--634.

\bibitem{CPSW2026} Y.-Z. Chen, Y. Peng, X.-D. Shi, X.-P. Wang. Phase transitions in non-isentropic compressible immiscible two-phase flows with the van der Waals equation of state. \textit{Sci. China Math.}, 2026.

	
	\bibitem{CHNPS2026} Y.-Z. Chen, Q.-L. He, D.-J. Niu, Y. Peng, X.-D. Shi. A regularized framework and admissible solutions for liquid-vapor phase transitions in steady compressible flows. \textit{Acta Math. Sci.}, accepted.
	
	\bibitem{CGS1984} J. Carr, M. Gurtin, M. Slemrod. Structured phase transitions on a finite interval. \textit{Arch. Ration. Mech. Anal.}, 1984, 86: 317--351.
	
	\bibitem{EMN1993} A. Eden, A. Milani, B. Nicolaenko. Local exponential attractors for models of phase change for compressible gas dynamics. \textit{Nonlinearity}, 1993, 6: 93--117.
	
\bibitem{F1991} H.-T. Fan. A Limiting "Viscosity" Approach to the Riemann Problem for Materials Exhibiting a Change of Phase (II). \textit{Arch. Ration. Mech. Anal.}, 1991, 116: 317--337.



\bibitem{FifeWang1998} P. Fife, X. Wang. Periodic structures in a van der Waals fluid. \textit{Proc. R. Soc. Edinb. Sect. A}, 1998, 128: 235--250.

\bibitem{G1989} M. Grinfeld. Nonisothermal dynamic phase transitions. \textit{Quart. Appl. Math.}, 1989, 47(1): 71--84.

\bibitem{BG1998} S. Benzoni-Gavage. Stability of multi-dimensional phase transitions in a van der Waals fluid. \textit{Nonlinear Anal.}, 1998, 31(1--2): 243--263.

\bibitem{BG1999} S. Benzoni-Gavage. Stability of subsonic planar phase boundaries in a van der Waals fluid. \textit{Arch. Ration. Mech. Anal.}, 1999, 150(1): 23--55.


\bibitem{H1986} H. Hattori. The Riemann problem for a van der Waals fluid with entropy rate admissibility criterion: nonisothermal case. \textit{J. Differential Equations}, 1986, 65(2): 158--174.
	


	\bibitem{HW1965} F. Harlow, J. Welch. Numerical calculation of time-dependent viscous incompressible flow with a free surface. \textit{Phys. Fluids}, 1965, 8(12): 2182--2189.
	
	\bibitem{HS2020} Q.-L. He, X.-D. Shi. Numerical study of compressible Navier--Stokes-Cahn--Hilliard system. \textit{Comm. Math. Sci.}, 2020, 18(2): 571--591.
	
	\bibitem{HS2021} Q.-L. He, X.-D. Shi. Energy stable discontinuous Galerkin method for compressible Navier--Allen--Cahn system. \textit{Commun. Nonlinear Sci. Numer. Simul.}, 2021, 98: 105771.
	
	\bibitem{HLS2018} Q.-L. He, C. Liu, X.-D. Shi. Numerical study of phase transition in van der Waals fluid. \textit{Discrete Contin. Dyn. Syst. Ser. B}, 2018, 23(10): 4519--4540.
	
	\bibitem{Hoff1991} D. Hoff. Discontinuous solutions of the Navier--Stokes equations for compressible flow. \textit{Arch. Ration. Mech. Anal.}, 1991, 114(1): 15--46.
	
	\bibitem{Hoff1992} D. Hoff. Global well-posedness of the Cauchy problem for the Navier--Stokes equations of nonisentropic flow with discontinuous initial data. \textit{J. Differential Equations}, 1992, 95(1): 33--74.
	
	\bibitem{HS1985} D. Hoff, J. Smoller. Solutions in the large for certain nonlinear parabolic systems. \textit{Ann. Inst. Henri Poincar\'e Anal. Non Lin\'eaire}, 1985, 2(3): 213--235.
	
	\bibitem{HK1993} D. Hoff, M. Khodja. Stability of coexisting phases for compressible van der Waals fluids. \textit{SIAM J. Appl. Math.}, 1993, 53: 1--14.

	\bibitem{HLX2012} X. Huang, J. Li, Z.-P. Xin. Global well-posedness of classical solutions with large oscillations and vacuum to the three-dimensional isentropic compressible Navier--Stokes equations. \textit{Comm. Pure Appl. Math.}, 2012, 65: 549--585.	
	
		
	\bibitem{HMS2004} F.-M. Huang, A. Matsumura, X.-D. Shi. On the stability of contact discontinuity for compressible Navier--Stokes equations with free boundary. \textit{Osaka J. Math.}, 2004, 41: 193--210.

    
    \bibitem{HWZ2025} F. M. Huang, Y. Wang, J. Zhang. Time-asymptotic stability of composite waves of degenerate Oleinik shock and rarefaction for non-convex conservation laws. \textit{Math. Ann.}, 2025, 392(1): 1--46.	

		\bibitem{HW1997} D. Y. Hsieh, X. Wang. Phase transition in van der Waals fluids. \textit{SIAM J. Appl. Math.}, 1997, 57: 871--892.
	
	\bibitem{H1990} L. Hsiao. Uniqueness of admissible solutions of the Riemann problem for a system of conservation laws of mixed type. \textit{J. Differential Equations}, 1990, 86: 197--233.

\bibitem{J1980} R. D. James. The propagation of phase boundaries in elastic bars. \textit{Arch. Ration. Mech. Anal.}, 1980, 73: 125--158.

	
	\bibitem{JosephLeFloch2007} K. T. Joseph, P.-G. LeFloch. Singular limits in phase dynamics with physical viscosity and capillarity. \textit{Proc. R. Soc. Edinb. Sect. A}, 2007, 137(6): 1287--1312.

\bibitem{Jiang1999} S. Jiang. Large-time behavior of solutions to the equations of a one-dimensional viscous polytropic ideal gas in unbounded domains. \textit{Comm. Math. Phys.}, 1999, 200: 181--193.
	
	\bibitem{KS1977} A. V. Kazhikhov, V. V. Shelukhin. Unique global solution with respect to time of initial-boundary value problems for one-dimensional equations of a viscous gas. \textit{J. Appl. Math. Mech.}, 1977, 41(2): 273--282.
	
	\bibitem{Kazhikhov1982} A. V. Kazhikhov. Cauchy problem for viscous gas equations. \textit{Siberian Math. J.}, 1982, 23: 44--49.
	
	\bibitem{Kanel1979} Ya. I. Kanel. Cauchy problem for the equations of gasdynamics with viscosity. \textit{Siberian Math. J.}, 1979, 20: 208--218.
	
	\bibitem{LL2016} J. Li, Z. Liang. Some uniform estimates and large time behavior of solutions to one-dimensional compressible Navier--Stokes system in unbounded domains with large data. \textit{Arch. Ration. Mech. Anal.}, 2016, 220: 1195--1208.
	
	\bibitem{Lions1998} P. L. Lions. \textit{Mathematical Topics in Fluid Mechanics, Vol. 2: Compressible Models}. Oxford University Press, New York, 1998.
	
	\bibitem{MN1980} A. Matsumura, T. Nishida. The initial value problem for the equations of motion of viscous and heat-conductive gases. \textit{J. Math. Kyoto Univ.}, 1980, 20(1): 67--104.
	
	\bibitem{MN1985} A. Matsumura, K. Nishihara. On the stability of traveling wave solutions of a one-dimensional model system for compressible viscous gas. \textit{Japan J. Appl. Math.}, 1985, 2: 17--25.
	
	\bibitem{MN1986} A. Matsumura, K. Nishihara. Asymptotics toward the rarefaction wave of the solutions of a one-dimensional model system for compressible viscous gas. \textit{Japan J. Appl. Math.}, 1986, 3: 1--13.
	
	\bibitem{M1875} J. C. Maxwell. On the Dynamical Evidence of the Molecular Constitution of Bodies. In \textit{The Scientific Papers of James Clerk Maxwell}, Cambridge University Press, 2013: 418--438.
	
	\bibitem{MLW2007a} M. Mei, L. Liu, Y. Wong. Phase transitions in a coupled viscoelastic system with periodic initial-boundary condition (I): existence and uniform boundedness. \textit{Discrete Contin. Dyn. Syst. Ser. B}, 2007, 7: 825--837.
	
	\bibitem{MLW2007b} M. Mei, L. Liu, Y. Wong. Phase transitions in a coupled viscoelastic system with periodic initial-boundary condition (II): convergence. \textit{Discrete Contin. Dyn. Syst. Ser. B}, 2007, 7: 839--857.
	
	\bibitem{MW2008} M. Mei, Y. Wong, L. Liu. Stationary solutions of phase transitions in a coupled viscoelastic system. In N. Roux (ed.), \textit{Nonlinear Analysis Research Trends}, 2008: 277--293.
	
	\bibitem{NR1950} J. von Neumann, R. D. Richtmyer. A method for the numerical calculation of hydrodynamic shocks. \textit{J. Appl. Phys.}, 1950, 21(3): 232--237.
	
 
   \bibitem{S1986} M. Shearer. Nonuniqueness of admissible solutions of Riemann initial value problem for a system of conservation laws of mixed type. \textit{Arch. Ration. Mech. Anal.}, 1986, 93(1): 45--59.
	
	

   \bibitem{S1984} M. Slemrod. Dynamic phase transitions in a van der Waals fluid. \textit{J. Differential Equations}, 1984, 52(1): 1--23.

   \bibitem{S1983} M. Slemrod. Admissibility criteria for propagating phase boundaries in a van der Waals fluid. \textit{Arch. Ration. Mech. Anal.}, 1983, 81(4): 301--315.

\bibitem{S1989} M. Slemrod. A Limiting "Viscosity" Approach to the Riemann Problem for Materials Exhibiting Change of Phase. \textit{Arch. Ration. Mech. Anal.}, 1989, 105: 327--365.

 	\bibitem{vdW1894} J. D. van der Waals. Thermodynamische Theorie der Kapillarit\"at unter Voraussetzung stetiger Dichte\"anderung. \textit{Z. Phys. Chem.}, 1894, 13: 657--725. English translation reprinted in \textit{J. Stat. Phys.}, 1979.

\bibitem{Tang1998} S. Tang. Steady states of some models for van der Waals fluids. \textit{Commun. Nonlinear Sci. Numer. Simul.}, 1998, 3(3): 163--167.

\bibitem{Tsyganov2007} E. Tsyganov. Asymptotic behavior and dynamic stability of phase mixtures for the equations of Navier--Stokes with nonmonotonic pressure. \textit{Z. Angew. Math. Phys.}, 2007, 58(5): 767--777.
 
   \bibitem{WX2003} Y.-G. Wang, Z. Xin. Stability and existence of multidimensional subsonic phase transitions. \textit{Acta Math. Appl. Sin. Engl. Ser.}, 2003, 19(4): 529--558.
	
\bibitem{Y1992} S. Yanagi. The Riemann problem for a class of conservation laws of van der Waals fluid. \textit{Japan J. Indust. Appl. Math.}, 1992, 9: 239--268.

\bibitem{Z2007} S.-Y. Zhang. Existence of travelling waves in non-isothermal phase dynamics. \textit{J. Hyperbolic Differ. Equ.}, 2007, 4(3): 391--400.


	\bibitem{Z2000} K. Zumbrun. Dynamical stability of phase transitions in the p-system with viscosity-capillarity. \textit{SIAM J. Appl. Math.}, 2000, 60: 1913--1924.
	
\end{thebibliography}
\end{document}